\newtheorem{thm}{Theorem}[section]
\newtheorem{cor}[thm]{Corollary}
\newtheorem{lem}[thm]{Lemma}
\newtheorem{prop}[thm]{Proposition}
\theoremstyle{definition}
\newtheorem{rem}[thm]{Remark}
\newtheorem{exa}[thm]{Example}
\newcommand{\im}{\operatorname{im}\nolimits}
\newcommand{\ind}[1]{\operatorname{ind}_{#1}\nolimits}
\newcommand{\proj}[1]{\operatorname{proj}_{#1}\nolimits}
\newcommand{\Projeins}{\ensuremath{\operatorname{Proj}^{\,\operatorname{1}}}}
\newcommand{\Bigcap}[1]{\ensuremath{\mathop{\cap}_{#1}}}
\newcommand{\Bigcup}[1]{\ensuremath{\mathop{\cup}_{#1}}}
\newcommand{\Bigsum}[2]{\ensuremath{\mathop{\textstyle\sum}_{#1}^{#2}}}
\newcommand{\Prod}[2]{\ensuremath{\underset{\scriptscriptstyle #1}{\overset{\scriptscriptstyle#2}{{\textstyle\prod\displaystyle}}}\,}}
\begin{document}

\title{Projective limits of weighted LB-spaces\\of holomorphic functions}

\author{Sven-Ake Wegner}

\date{May 21, 2011}

\maketitle

\renewcommand{\thefootnote}{}
\footnote{2010 \emph{Mathematics Subject Classification}: Primary 46E10; Secondary 30H05, 46A13.}
\footnote{\emph{Key words and phrases}: PLB-space, derived projective limit functor, weighted space.}

\renewcommand{\thefootnote}{\arabic{footnote}}
\setcounter{footnote}{0}

\vspace{-40pt}
\begin{center}
\textit{Dedicated to the memory of Klaus D.~Bierstedt.}\vspace{15pt}
\end{center}

\begin{abstract}\noindent{}Countable projective limits of countable inductive limits, called PLB-spaces, of weighted Banach spaces of continuous functions have recently been investigated by Agethen, Bierstedt and Bonet. We extend their investigation to the case of holomorphic functions regarding the same type of questions, i.e.~we analyze locally convex properties in terms of the defining double sequence of weights and study the interchangeability of projective and inductive limit. 
\end{abstract}

%%%%%%%%%%%%%%%%%%%%%%%%%%%%%%%%%%%%%%%%%%%%%%%%%%%%%%%%%%%%%%%%%%%%%%%%%%%%%%%%%%%%%%%%%%%%%%%%%%%%%%%%%%%%%%%%%%%%%%%
%%                                                                                                                   %%
%% 1 Introduction                                                                                                    %%
%%                                                                                                                   %%
%%%%%%%%%%%%%%%%%%%%%%%%%%%%%%%%%%%%%%%%%%%%%%%%%%%%%%%%%%%%%%%%%%%%%%%%%%%%%%%%%%%%%%%%%%%%%%%%%%%%%%%%%%%%%%%%%%%%%%%

\section{\hspace{-10pt}Introduction}\label{Introduction}

In this article we investigate the structure of spaces of holomorphic functions defined on an open subset of $\mathbb{C}^d$ that can be written as a countable intersection of countable unions of weighted Banach spaces of holomorphic functions where the latter are defined by weighted sup-norms. The spaces we are interested in are examples of PLB-spaces, i.e.~countable projective limits of countable inductive limits of Banach spaces. Spaces of this type arise naturally in analysis, for instance the space of distributions, the space of real analytic functions and several spaces of ultradifferentiable functions and ultradistributions are of this type. In particular, some of the mixed spaces of ultradistributions (studied recently by Schmets, Valdivia \cite{SchVal2008, SchVal2008a, SchVal2009}) appear to be weighted PLB-spaces of holomorphic functions (see \cite[Section 15]{PHD} for details). In fact, all the forementioned spaces are even PLS-spaces that is the linking maps in the inductive spectra of Banach 
spaces are compact and some of them even appear to be PLN-spaces (i.e.~the linking maps are nuclear). During the last years the theory of PLS-spaces has played an important role in the application of abstract functional analytic methods to several classical problems in analysis. We refer to the survey article \cite{Domanski2004} of Doma\'{n}ski for applications, examples and further references.
\smallskip
\\Many of the applications reviewed by Doma\'{n}ski \cite{Domanski2004} are based on the theory of the so-called first derived functor of the projective limit functor. This method has its origin in the application of homological algebra to functional analysis. The research on this subject was started by Palamodov \cite{Palamodov1971, Palamodov1968} in the late sixties and carried on since the mid eighties by Vogt \cite{VogtLectures} and many others. We refer to the book of Wengenroth \cite{Wengenroth}, who laid down a systematic study of homological tools in functional analysis and in particular presents many ready-for-use results concerning concrete analytic problems. In particular, \cite[Chapter 5]{Wengenroth} illustrates that for the splitting theory of Fr\'{e}chet or more general locally convex spaces, the consideration of PLB-spaces which are not PLS-spaces is indispensable.
\smallskip
\\A major application of the theory of the derived projective limit functor $\Projeins$ is the connection between its vanishing on a countable projective spectrum of LB-spaces and locally convex properties of the projective limit of the spectrum (e.g.~being ultrabornological or barrelled). This connection was firstly noticed by Vogt \cite{VogtLectures, Vogt1989}, see \cite[3.3.4 and 3.3.6]{Wengenroth}, who also gave characterizations of the vanishing of $\Projeins$ and the forementioned properties in the case of sequence spaces, cf.~\cite[Section 4]{Vogt1989}. A natural extension of Vogt's work is to study the case of continuous functions, which was the subject of the thesis of Agethen \cite{Agethen2004}. Recently, an extended and improved version of her results was published by Agethen, Bierstedt, Bonet \cite{ABB2009}. In addition to the study of the projective limit functor, Agethen, Bierstedt, Bonet studied the interchangeability of projective and inductive limit, i.e.~the question when the PLB-spaces are 
equal to the weighted LF-spaces of continuous functions studied for the first time by Bierstedt, Bonet \cite{BB1994}. In view of the results of \cite{ABB2009}, it is a natural objective to extend the investigation on weighted PLB-spaces of holomorphic functions, having in mind the same type of questions.
\smallskip
\\As in the case of sequence spaces and continuous functions the starting point in the definition of weighted PLB-spaces of holomorphic functions is a double sequence of strictly positive and continuous functions (weights); in Section \ref{Preliminaries} we precise the latter and establish further terminology and basic properties of the weighted PLB-spaces of holomorphic functions $AH(G)$ and $(AH)_0(G)$ under O- and o-growth conditions. According to the above, our first aim is then the characterization of locally convex properties of the spaces in terms of the defining sequence. This task is splitted in two parts: In Section \ref{Necessary} we study necessary conditions for barrelledness of $AH(G)$ and $(AH)_0(G)$. This is possible within a setting of rather mild assumptions, whose definition is motivated by the article \cite{BBG} of Bierstedt, Bonet, Galbis. In Section \ref{Sufficient} we turn to sufficient conditions for ultrabornologicity and barrelledness, where we have to decompose holomorphic 
functions in order to utilize the homological methods mentioned earlier and also to apply a criterion explained in \cite{BonetWegner2010} by Bonet, Wegner which we need for our investigation of the o-growth case. Since there is up to now no method available, which allows a decomposition of holomorphic functions in broad generality (in contrast to the case of continuous functions, cf.~\cite[3.5]{ABB2009}), we restrict ourselves to the unit disc and introduce a class of weights for which decomposition is possible. The definition of this class traces back to work of Bierstedt, Bonet \cite{BB2003} and strongly relies on results of Lusky \cite{Lusky1992, Lusky1995}. In Section \ref{Inter}, we study the interchangeability of projective and inductive limit, which is of course closely connected with weighted LF-spaces of holomorphic functions, defined and studied recently by Bierstedt, Bonet \cite{BB2003}. Unfortunately, in all the results of Sections \ref{Necessary} -- \ref{Inter} the necessary conditions are 
slightly but strictly weaker than the sufficient conditions. Therefore, in section 6 we consider the case in which all the weights are essential in the sense of Taskinen and we also assume condition $(\Sigma)$ of Bierstedt and Bonet [6]. Under these further assumptions we provide full characterizations of ultrabornologicity and the interchangeability of projective and inductive limit. The examples we discuss in the final Section 7 illustrate that the new assumptions of section 6 are rather natural.
\smallskip
\\We refer the reader to \cite{BMS1982} for weighted spaces of holomorphic functions and to \cite{KoetheI, KoetheII, MeiseVogtEnglisch} for the general theory of locally convex spaces.

%%%%%%%%%%%%%%%%%%%%%%%%%%%%%%%%%%%%%%%%%%%%%%%%%%%%%%%%%%%%%%%%%%%%%%%%%%%%%%%%%%%%%%%%%%%%%%%%%%%%%%%%%%%%%%%%%%%%%%%
%%                                                                                                                   %%
%% 2 Notation and preliminary results                                                                                %%
%%                                                                                                                   %%
%%%%%%%%%%%%%%%%%%%%%%%%%%%%%%%%%%%%%%%%%%%%%%%%%%%%%%%%%%%%%%%%%%%%%%%%%%%%%%%%%%%%%%%%%%%%%%%%%%%%%%%%%%%%%%%%%%%%%%%

\section{\hspace{-10pt}Notation and preliminary results}\label{Preliminaries}

Let $G$ be an open subset of $\mathbb{C}^d$ and $d\geqslant1$. By $H(G)$ we denote the space of all holomorphic functions on $G$, which we endow with the topology co of uniform convergence on the compact subsets. A \textit{weight} $a$ on $G$ is a strictly positive and continuous function on $G$. For a weight $a$ we define
\begin{align*}
Ha(G)&:=\{\,f\in H(G)\: ; \: \|f\|_a:=\sup_{z\in G}a(z)|f(z)|<\infty\,\},\\
Ha_0(G)&:=\{\,f\in H(G)\: ; \: a|f| \text{ vanishes at } \infty \text{ on } G\,\}.
\end{align*}
Recall that a function $g\colon G\rightarrow \mathbb{R}$ is said to vanish at infinity on $G$ if for each $\varepsilon>0$ there is a compact set $K$ in $G$ such that $|g(z)|<\varepsilon$ holds for all $z\in G\backslash K$. The space $Ha(G)$ is a Banach space for the norm $\|\cdot\|_a$ and $Ha_0(G)$ is a closed subspace of $Ha(G)$. In the first case we speak of \textit{O-growth conditions} and in the second of \textit{o-growth conditions}.
\medskip
\\In order to define the projective spectra we are interested in, we consider a double sequence $\mathcal{A}=((a_{N,n})_{n\in\mathbb{N}})_{N\in\mathbb{N}}$ of weights on $G$ which is decreasing in $n$ and increasing in $N$, i.e.~$a_{N,n+1}\leqslant a_{N,n}\leqslant a_{N+1,n}$ holds for each $n$ and $N$. This condition will be assumed on the double sequence $\mathcal{A}$ in the rest of this article. We define the norms $\|\cdot\|_{N,n}:=\|\cdot\|_{a_{N,n}}$ and hence we have $\|\cdot\|_{N,n+1}\leqslant \|\cdot\|_{N,n}\leqslant \|\cdot\|_{N+1,n}$ for each $n$ and $N$. Accordingly, $Ha_{N,n}(G)\subseteq Ha_{N,n+1}(G)$ and $H(a_{N,n})_0(G)\subseteq H(a_{N,n+1})_0(G)$ holds with continuous inclusions for all $N$ and $n$ and we can define for each $N$ the weighted inductive limits
$$
\mathcal{A}_NH(G):=\ind{n}Ha_{N,n}(G) \; \text{ and } \; (\mathcal{A}_N)_0H(G):=\ind{n}H(a_{N,n})_0(G).
$$
We denote by $B_{N,n}$ the closed unit ball of the Banach space $Ha_{N,n}(G)$, i.e.
$$
B_{N,n}:=\{\,f\in H(G)\:;\:\|f\|_{N,n}\leqslant 1\,\}.
$$
By Bierstedt, Meise, Summers \cite[end of the remark after 1.13]{BMS1982} (cf.~also Bierstedt, Meise \cite[3.5.(2)]{BM1985}) we know that $\mathcal{A}_NH(G)$ is a complete, hence regular LB-space. We will assume without loss of generality, by multiplying by adequate scalars, that every bounded subset $B$ in $\mathcal{A}_NH(G)$ is contained in $B_{N,n}$ for some $n$.
\smallskip
\\The weighted inductive limits $(\mathcal{A}_N)_0H(G)$ need not to be regular. The closed unit ball of the Banach space $H(a_{N,n})_0(G)$ is denoted by
$$
B_{N,n}^{\circ}:=\{\,f\in H(a_{N,n})_0(G)\:;\:\|f\|_{N,n}\leqslant 1\,\}.
$$
For each $N$ we have $\mathcal{A}_{N+1}H(G)\!\subseteq\!\mathcal{A}_NH(G)$ and $(\mathcal{A}_{N+1})_0H(G)\!\subseteq\!(\mathcal{A}_N)_0H(G)$ with continuous inclusions. Then $\mathcal{A}H:=(\mathcal{A}_NH(G),\subseteq_{N+1}^N)_{N\in\mathbb{N}}$ and $\mathcal{A}_0H:=((\mathcal{A}_N)_0H(G),\subseteq_{N+1}^N)_{N\in\mathbb{N}}$ are projective spectra of LB-spaces with inclusions as linking maps. We can now form the following projective limits, called \textit{weighted PLB-spaces of holomorphic functions}
$$
AH(G):=\proj{N}\mathcal{A}_NH(G) \; \text{ and } \; (AH)_0(G):=\proj{N}(\mathcal{A}_N)_0H(G),
$$
which are the object of our study in this work. By definition, $AH(G)\subseteq (AH)_0(G)$ holds with continuous inclusion.
\smallskip
\\We refer the reader to the book of Wengenroth \cite{Wengenroth} for a detailed exposition of the theory of projective spectra of locally convex spaces $\mathcal{X}=(X_N)_{N\in\mathbb{N}}$, their projective limits $\proj{N}X_N$, the derived functor $\Projeins$ and for conditions to ensure that the derived functor on a projective spectrum vanishes, i.e.~that we have $\Projeins\mathcal{X}=0$, including important results of Palamodov \cite{Palamodov1968, Palamodov1971}, Retakh \cite{Retakh1970}, Braun, Vogt \cite{BraunVogt1997}, Vogt \cite{VogtLectures, Vogt1989}, Frerick, Wengenroth \cite{FrerickWengenroth1996} and many others. At this point we only mention that, if $\mathcal{X}=(X_N)_{N\in\mathbb{N}}$ is a projective spectrum of locally convex spaces with inclusions as linking maps and limit $X=\proj{N}X_N$, the so-called fundamental resolution
$$
0\rightarrow X\rightarrow \Prod{N=1}{\infty}X_N\mathop{\rightarrow}^{\sigma}\Prod{N=1}{\infty}X_N,
$$
where $\sigma((x_N)_{N\in\mathbb{N}}):=(x_{N+1}-x_N)_{N\in\mathbb{N}}$, is exact but $\sigma$ is not necessarily surjective, what directs to the definition
$$
\Projeins\mathcal{X}:=\big(\Prod{N=1}{\infty}X_N\big)\big/\im\sigma.
$$
For more details see Wengenroth \cite[Chapter 3]{Wengenroth}.
\smallskip
\\An important tool to handle weighted spaces of holomorphic functions is the technique of associated weights or growth conditions mentioned by Anderson, Duncan \cite{AndersonDuncan1990}, studied for the first time in a systematic way by Bierstedt, Bonet, Taskinen \cite{BBT} and used in many articles dealing with weighted spaces of holomorphic functions. For a given weight $a$ we call $w:=1/a$ the corresponding \textit{growth condition} and define (cf.~\cite[1.1]{BBT}) the \textit{associated growth condition}
$$
\tilde{w}=({\textstyle\frac{1}{a}})^{\sim}\colon G\rightarrow \mathbb{R},\;\; z\,\mapsto\!\sup_{\stackrel{g\in H(G),}{\scriptscriptstyle|g|\leqslant w}}\!\!|g(z)|\,=\sup_{\scriptscriptstyle g\in B_a}|g(z)|.
$$
In \cite[previous to 1.12]{BBT}, Bierstedt, Bonet, Taskinen put $\tilde{a}:=1/\tilde{w}$ and called $\tilde{a}$ the \textit{weight associated with $a$}. However, in most cases we will stick to the first notation. Bierstedt, Bonet, Taskinen (cf.~\cite[4.B after 1.12]{BBT}) introduced as well an associated weight for the case of o-growth conditions by setting $\tilde{w}_0(z)=(1/a)^{\sim}(z):=\sup_{f\in B_a^\circ}|f(z)|$, but in a rather general setting (see Section \ref{Necessary}) both notions coincide.
\medskip
\\In \cite{Vogt1992} Vogt introduced the conditions (Q) and (wQ). In the case of weighted PLB-spaces, these conditions can be reformulated in terms of the weights as follows. We say that the sequence $\mathcal{A}$ satisfies condition (Q) if 
$$
\textstyle\forall \: N \; \exists \: M \geqslant N,\, n \; \forall \: K \geqslant M,\, m,\, \varepsilon > 0 \; \exists \: k, \, S>0 : \frac{1}{a_{M,m}} \leqslant \max\big(\frac{\varepsilon}{a_{N,n}},\frac{S}{a_{K,k}}\big),
$$
we say that it satisfies (wQ) if
$$
\textstyle\forall \: N \; \exists \: M \geqslant N,\, n \; \forall \: K \geqslant M,\, m \; \exists \: k, \, S>0 : \frac{1}{a_{M,m}} \leqslant \max\big(\frac{S}{a_{N,n}},\frac{S}{a_{K,k}}\big).
$$
It is clear that condition (Q) implies condition (wQ). Bierstedt, Bonet gave in \cite{BB1994} an example of a sequence which satisfies (wQ) but not (Q). We define the following conditions by the use of associated weights, where the quantifiers are always those of (wQ) or (Q) resp.~and the estimates are the following:
\begin{align*}
\textstyle\text{(Q)}^{\sim}_{\text{in}}  &\;\;\colon\;\; \textstyle \big(\frac{1}{a_{M,m}}\big)^{\sim} \leqslant \max\big(\big(\frac{\varepsilon}{a_{N,n}}\big)^{\sim},\big(\frac{S}{a_{K,k}}\big)^{\sim}\big)\\
\textstyle\text{(Q)}^{\sim}_{\text{out}} &\;\;\colon\;\; \textstyle \big(\frac{1}{a_{M,m}}\big)^{\sim} \leqslant \big(\max\big(\frac{\varepsilon}{a_{N,n}},\frac{S}{a_{K,k}}\big)\big)^{\sim}\\
\textstyle\text{(wQ)}^{\sim}_{\text{in}} &\;\;\colon\;\; \textstyle \big(\frac{1}{a_{M,m}}\big)^{\sim} \leqslant S\,\max\big(\big(\frac{1}{a_{N,n}}\big)^{\sim},\big(\frac{1}{a_{K,k}}\big)^{\sim}\big)\\
\textstyle\text{(wQ)}^{\sim}_{\text{out}}&\;\;\colon\;\; \textstyle \big(\frac{1}{a_{M,m}}\big)^{\sim} \leqslant S\big(\max\big(\frac{1}{a_{N,n}},\frac{1}{a_{K,k}}\big)\big)^{\sim}
\end{align*}
It follows from \cite[1.2.(vii)]{BBT} that condition $\text{(Q)}^{\sim}_{\text{in}}$ implies $\text{(Q)}^{\sim}_{\text{out}}$ and $\text{(wQ)}^{\sim}_{\text{in}}$ implies $\text{(wQ)}^{\sim}_{\text{out}}$ in general. Moreover, condition (wQ) implies $\text{(wQ)}^{\sim}_{\text{out}}$ and condition (Q) implies $\text{(Q)}^{\sim}_{\text{out}}$.
\smallskip
\\In \cite[1.1]{Vogt1983} Vogt introduced a variant of the following condition to characterize Fr\'{e}chet spaces between which all continuous linear mappings are bounded. According to Vogt, but reformulated for our setting, we say that a sequence $\mathcal{A}$ as above satisfies condition (B) if
$$
\forall \: (n(N))_{N\in\mathbb{N}}\subseteq \mathbb{N} \; \exists \: m \; \forall \: M \; \exists \: L,\, c>0 \colon a_{M,m} \, \leqslant \, c \max_{\scriptscriptstyle N=1,\dots,L} a_{N,n(N)}.
$$
Condition $\text{(B)}^{\sim}$ is defined by the same quantifiers and the estimate replaced by
$$
\tilde{a}_{M,m} \, \leqslant \, c (\max_{\scriptscriptstyle N=1,\dots,L} a_{N,n(N)})^{\sim}.
$$
Again, \cite[1.2.(vii)]{BBT} provides that (B) implies $\text{(B)}^{\sim}$.

%%%%%%%%%%%%%%%%%%%%%%%%%%%%%%%%%%%%%%%%%%%%%%%%%%%%%%%%%%%%%%%%%%%%%%%%%%%%%%%%%%%%%%%%%%%%%%%%%%%%%%%%%%%%%%%%%%%%%%%
%%                                                                                                                   %%
%% 3 Barrelledness of AH(G) and $\text{(AH)}_{\text{0}}\text{(G)}$: Results for arbitrary and for balanced domains   %%
%%                                                                                                                   %%
%%%%%%%%%%%%%%%%%%%%%%%%%%%%%%%%%%%%%%%%%%%%%%%%%%%%%%%%%%%%%%%%%%%%%%%%%%%%%%%%%%%%%%%%%%%%%%%%%%%%%%%%%%%%%%%%%%%%%%%

\section{\hspace{-10pt}Necessary conditions for barrelledness:\newline{}Results for balanced domains}
\label{Necessary}
In what follows we establish necessary conditions for barrelledness of the spaces $AH(G)$ and $(AH)_0(G)$. This is possible under rather mild assumptions. In \cite{BBG} Bierstedt, Bonet, Galbis studied the following setting: $G$ is balanced, all considered weights are radial (i.e.~for each weight $a$ they assume $a(z)=a(\lambda z)$ for every $\lambda\in\mathbb{C}$ with $|\lambda|=1$), the Banach space topologies are stronger than co and the polynomials are contained in all the considered spaces. They remark that for bounded $G$ the latter is equivalent to requiring that each weight $a_{N,n}$ extends continuously to $\overline{G}$ with $a_{N,n}|_{\partial G}=0$, while for $G=\mathbb{C}^d$ the assumption means exactly that each weight $a_{N,n}$ is rapidly decreasing at $\infty$ (cf.~\cite[remark previous to 1.2]{BBG}). In this setting (which we will in the sequel call the \textit{balanced setting}) we have $\overline{B_a^{\circ}}^{\text{co}}=B_a$ (cf.~\cite[1.5.(c)]{BBG}) and by \cite[1.6.(d)]{BBG}, $(AH)_0(G) \subseteq AH(G)$ is a topological subspace. Bierstedt, Bonet, Taskinen \cite[1.13]{BBT} showed that $\tilde{w}=\tilde{w}_0$ if $Ha_0(G)''=Ha(G)$ holds isometrically. By \cite[1.5.(d)]{BBG} the latter is the case in the balanced setting. One of the crucial techniques used by Bierstedt, Bonet, Galbis is based on the existence of a Taylor series representation about zero for each $f\in H(G)$,
$$
f(z)=\Bigsum{k=0}{\infty}p_k(z)\;\; \text{ for } z\in G,
$$
where $p_k$ is a $k$-homogeneous polynomial for $k\geqslant0$. The series converges to $f$ uniformly on each compact subset of $G$. The \textit{Ces\`{a}ro means} of the partial sums of the Taylor series of $f$ are denoted by $S_j(f)$, $j\geqslant0$, that is,
$$
[S_j(f)](z)={\textstyle\frac{1}{j+1}}\Bigsum{\ell=0}{j}\big(\Bigsum{k=0}{\ell}p_k(z)\big)\;\; \text{ for } z\in G.
$$
Each $S_j(f)$ is a polynomial of degree less or equal to $j$ and $S_j(f)\rightarrow f$ uniformly on every compact subset of $G$ (cf.~\cite[Section 1]{BBG}).
\smallskip
\\In the balanced setting it is now easy to verify that the projective spectrum $\mathcal{A}_0H$ is strongly reduced in the sense of \cite[3.3.5]{Wengenroth}. In fact it is a reduced projective limit in the sense of K\"othe \cite[p.~120]{KoetheI}, which is a stronger condition: By definition we have $\mathbb{P}\subseteq H(a_{N,n})_0(G)$ for all $N$ and $n$ and each $Ha_{N,n}(G)$ has a topology stronger than co. Thus, by Bierstedt, Bonet, Galbis \cite[1.6]{BBG} it follows that $\mathbb{P}$ is dense in $(\mathcal{A}_N)_0H(G)$ for each $N$. In particular, the polynomials are contained in the projective limit $(AH)_0(G)$ and hence
$$
(\mathcal{A}_N)_0H(G)\supseteq\overline{(AH)_0(G)}^{(\mathcal{A}_N)_0H(G)}\supseteq \overline{\mathbb{P}}^{(\mathcal{A}_n)_0H(G)}=(\mathcal{A}_N)_0H(G)
$$
holds for each $N$, i.e.~the projective limit is dense in every step. In fact, the space of all polynomials $\mathbb{P}$ on $G$ is also dense in the projective limit $(AH)_0(G)$.

\begin{thm}\label{o-growth-necessary-conditions}Assume that we are in the balanced setting. Then we have the implications (i)$\Rightarrow$(ii)$\Rightarrow$(iii)$\Rightarrow$(iv), where\vspace{2pt}
\\\begin{tabular}{rlrl}\vspace{1pt}
(i)  &$\Projeins \mathcal{A}_0H=0$,              & (iii) &$(AH)_0(G)$ is barrelled,                                           \\
(ii) &$(AH)_0(G)$ is ultrabornological,          & (iv)  &$\mathcal{A}$ satisfies condition $\text{(wQ)}^{\sim}_{\text{in}}$. \\
\end{tabular}
\end{thm}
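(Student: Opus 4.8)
The plan is to handle the three implications separately, putting essentially all of the work into (iii)$\Rightarrow$(iv).

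\textbf{(i)$\Rightarrow$(ii) and (ii)$\Rightarrow$(iii).} For the first implication I would simply feed the structural facts already recorded into the general theory of the derived functor. We have observed above that the spectrum $\mathcal{A}_0H$ is strongly reduced, so Vogt's theorem in the form of Wengenroth \cite[3.3.4 and 3.3.6]{Wengenroth} applies and gives that $\Projeins\mathcal{A}_0H=0$ forces the limit $(AH)_0(G)=\proj{N}(\mathcal{A}_N)_0H(G)$ to be ultrabornological. The second implication is the standard fact that every ultrabornological locally convex space is barrelled, so there is nothing to prove.

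\textbf{(iii)$\Rightarrow$(iv), outline.} Here I would first pass from barrelledness to condition (wQ) for the spectrum and then translate the latter into the weight condition $\text{(wQ)}^{\sim}_{\text{in}}$. For the first step I use that for a reduced projective spectrum of LB-spaces barrelledness of the limit already forces (wQ) (Vogt \cite{Vogt1989}, see Wengenroth \cite[3.3.4]{Wengenroth}); in ball form this reads
\begin{equation*}
\forall\,N\;\exists\,M\geq N,\,n\;\forall\,K\geq M,\,m\;\exists\,k,\,S>0:\quad B_{M,m}^{\circ}\cap(AH)_0(G)\subseteq\overline{S\,(B_{N,n}^{\circ}+B_{K,k}^{\circ})}^{\,\text{co}}.
\end{equation*}
Taking the co-closure on the right is what keeps every point evaluation $\delta_z$ continuous on the sets involved, which is exactly what I need in order to pass to associated weights.

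\textbf{(iii)$\Rightarrow$(iv), the translation.} With the quantifiers supplied above fixed, I would evaluate the inclusion at an arbitrary point $z\in G$. By the definition of the associated growth condition and since we are in the balanced setting, where $\tilde{w}_0=\tilde{w}$ holds and $\overline{B_a^{\circ}}^{\,\text{co}}=B_a$, we have $\sup_{f\in B_{N,n}^{\circ}}|f(z)|=\big(\tfrac{1}{a_{N,n}}\big)^{\sim}(z)$ for every $N,n$, and similarly for the other two balls; furthermore the density of $\mathbb{P}$ in $(AH)_0(G)$ (approximating through the Cesàro means $S_j(f)$, which do not increase the norm) guarantees that $\sup\{\,|f(z)|:f\in B_{M,m}^{\circ}\cap(AH)_0(G)\,\}$ still equals $\big(\tfrac{1}{a_{M,m}}\big)^{\sim}(z)$. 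Taking suprema of $|f(z)|$ over the two sides therefore yields
\begin{equation*}
\big(\tfrac{1}{a_{M,m}}\big)^{\sim}(z)\;\leq\;S\Big(\big(\tfrac{1}{a_{N,n}}\big)^{\sim}(z)+\big(\tfrac{1}{a_{K,k}}\big)^{\sim}(z)\Big)\;\leq\;2S\,\max\Big(\big(\tfrac{1}{a_{N,n}}\big)^{\sim}(z),\big(\tfrac{1}{a_{K,k}}\big)^{\sim}(z)\Big),
\end{equation*}
and, $z$ being arbitrary, this is precisely $\text{(wQ)}^{\sim}_{\text{in}}$ (with $2S$ in place of $S$). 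Note that the elementary estimate ``$\sup$ of a sum $\leq$ sum of the suprema'' is what produces the associated weight \emph{inside} the maximum, i.e.\ the ``in'' variant rather than the weaker ``out'' variant.

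\textbf{Main obstacle.} The delicate point is not the homological input of (i)--(ii) but the passage from the abstract ball inclusion to the pointwise estimate. One must check that replacing $S(B_{N,n}^{\circ}+B_{K,k}^{\circ})$ by its co-closure does not enlarge $\sup|f(z)|$ beyond $S\big((\tfrac{1}{a_{N,n}})^{\sim}(z)+(\tfrac{1}{a_{K,k}})^{\sim}(z)\big)$, which rests on the balanced-setting identities $\overline{B_a^{\circ}}^{\,\text{co}}=B_a$ and $\tilde{w}_0=\tilde{w}$, and that intersecting the left-hand side with the limit $(AH)_0(G)$ does not destroy the supremum $\big(\tfrac{1}{a_{M,m}}\big)^{\sim}(z)$, for which the density of the polynomials in the projective limit is indispensable. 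Getting these normalisations right is where the real work of the implication lies.
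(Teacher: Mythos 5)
Your proposal is correct and follows essentially the same route as the paper: (i)$\Rightarrow$(ii)$\Rightarrow$(iii) are quoted from Wengenroth's results for strongly reduced spectra of LB-spaces, and (iii)$\Rightarrow$(iv) comes from the barrelledness criterion evaluated at the point evaluations $\delta_z$, your ball inclusion with co-closure being just the primal (bipolar) restatement of the dual-norm condition $\text{(P}_{\!2}^{\star}\text{)}$, $\|\varphi\|^{\star}_{M,m}\leqslant S(\|\varphi\|^{\star}_{N,n}+\|\varphi\|^{\star}_{K,k})$, which the paper applies directly with $\varphi=\delta_z$. The extra care you take (intersecting the left-hand ball with $(AH)_0(G)$ and recovering the supremum via Ces\`aro means) is sound but not needed, since the cited theorem already gives the estimate for the full ball $B^{\circ}_{M,m}$, whose supremum at $\delta_z$ is $(\tfrac{1}{a_{M,m}})_0^{\sim}(z)=(\tfrac{1}{a_{M,m}})^{\sim}(z)$ by the balanced-setting identity $\tilde{w}_0=\tilde{w}$, exactly as in the paper's computation.
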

\begin{proof} In view of Wengenroth \cite[3.3.4]{Wengenroth} (cf.~Vogt \cite[5.7]{VogtLectures}) it is enough to show that (iii) implies (iv): By \cite[3.3.6]{Wengenroth} (cf.~\cite[5.10]{VogtLectures}) barrelledness and reducedness imply condition $\text{(P}_{\!2}^{\star}\text{)}$, that is for each $N$ there exist $M$ and $n$ such that for each $K$ and $m$ there exist $k$ and $S>0$ such that for all $\varphi\in(\mathcal{A}_N)_{0}H(G)'$ the estimate $\|\varphi\|_{M,m}^{\star} \leqslant S(\|\varphi\|_{N,n}^{\star}+\|\varphi\|_{K,k}^{\star})$ holds, where $\|\varphi\|_{N,n}^{\star}=\sup_{f\in B_{N,n}^{\circ}}|\varphi(f)|$ denotes the dual norm. For arbitrary $z\in G$ we put $\varphi=\delta_z$ where $\delta_z(f):=f(z)$ and compute $\|\delta_z\|_{N,n}^{\star}=\sup_{\scriptscriptstyle f\in B_{N,n}^{\circ}}|\delta_z(f)|=(\frac{1}{a_{N,n}})_0^{\sim}(z)=(\frac{1}{a_{N,n}})^{\sim}(z)$.
Since the sum in the above condition can be estimated by two times the maximum we are done.
\end{proof}
\noindent{}In the proof above the reducedness of the projective spectrum $\mathcal{A}_0H$ is an essential ingredient. Due to the fact that we have no information about reducedness of $\mathcal{A}H$, it is not possible to proceed in the above way in order to get a corresponding result for O-growth conditions. However, a result for O-growth conditions, which is completely similar to \ref{o-growth-necessary-conditions}, is true. For its proof we need the next lemma, which is just an abstract formulation of a method invented by Bierstedt, Bonet \cite[Proof of \textquotedblleft{}(ii)$\Rightarrow$(iii)\textquotedblright{} of 3.10]{BierstedtBonet1988a} and which is also the key point in \cite[Proof of 3.8.(2)]{ABB2009}.

\begin{lem}\label{Abstract Lemma} Let $X$ and $X_0$ be locally convex spaces and $J\colon X_0\rightarrow X$ be a linear and continuous map. Assume that there exists an equicontinuous net $(S_{\alpha})_{\alpha\in A}\subseteq L(X,X_0)$ such that $S_{\alpha}(J(x))\rightarrow x$ holds for each $x\in X_0$. If $X$ is barrelled, then $X_0$ is quasibarrelled.
\end{lem}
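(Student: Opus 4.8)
The goal is to show that $X_0$ is quasibarrelled, i.e.\ that every bornivorous barrel in $X_0$ is a neighborhood of zero; equivalently, that every strongly bounded subset of the dual $X_0'$ is equicontinuous. The plan is to transport a given strongly bounded set in $X_0'$ across the map $J$ into the dual of the barrelled space $X$, where bounded automatically means equicontinuous, and then to pull the resulting equicontinuity back through the approximating net $(S_\alpha)$.

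First I would fix a $\beta(X_0',X_0)$-bounded set $B'\subseteq X_0'$ and attempt to produce an equicontinuous family in $X'$ out of it. The natural candidates are the functionals $\varphi\circ S_\alpha$ for $\varphi\in B'$ and $\alpha\in A$; these lie in $X'$ because each $S_\alpha\in L(X,X_0)$ and $\varphi$ is continuous on $X_0$. The key intermediate claim is that the set $\{\,\varphi\circ S_\alpha \; ;\; \varphi\in B',\,\alpha\in A\,\}$ is $\beta(X',X)$-bounded in $X'$. To see this, take a bounded set $D\subseteq X$; since $(S_\alpha)$ is equicontinuous it is pointwise bounded and in fact maps $D$ into a bounded subset $S(D)$ of $X_0$ (here one uses that an equicontinuous family sends bounded sets to bounded sets, using the locally convex structure). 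Then $\sup_{\varphi,\alpha}\sup_{x\in D}|\varphi(S_\alpha x)|\leqslant \sup_{\varphi\in B'}\sup_{y\in S(D)}|\varphi(y)|<\infty$ by strong boundedness of $B'$, which gives the asserted boundedness.

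Next, because $X$ is barrelled, every strongly bounded subset of $X'$ is equicontinuous; hence $\{\varphi\circ S_\alpha\}$ is equicontinuous on $X$. Now I would exploit the approximation hypothesis $S_\alpha(J(x))\to x$ for $x\in X_0$ to recover equicontinuity of $B'$ itself. Fix $\varphi\in B'$ and $x\in X_0$: then $\varphi(x)=\lim_\alpha \varphi(S_\alpha(J(x)))=\lim_\alpha (\varphi\circ S_\alpha)(J(x))$, so $B'$ is contained in the pointwise (i.e.\ $\sigma(X_0',X_0)$) closure of the image of the equicontinuous set $\{\varphi\circ S_\alpha\}$ under the pre-composition $J'\colon X'\to X_0'$, $\psi\mapsto\psi\circ J$. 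Since $J$ is continuous, $J'$ carries equicontinuous sets in $X'$ to equicontinuous sets in $X_0'$, and equicontinuous sets are stable under taking $\sigma(X_0',X_0)$-closures (the closure of an equicontinuous set is equicontinuous). Therefore $B'$ is equicontinuous, and as $B'$ was an arbitrary strongly bounded subset of $X_0'$, the space $X_0$ is quasibarrelled.

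The main obstacle I anticipate is the first step, namely verifying that $\{\varphi\circ S_\alpha\}$ is genuinely strongly bounded in $X'$: this is where the hypotheses have to be used in full, since one needs equicontinuity of $(S_\alpha)$ precisely to guarantee that images of bounded sets stay bounded and that the double supremum over $\varphi$, $\alpha$ and $D$ is finite. The remaining steps are soft functional-analytic facts (barrelledness turning strong boundedness into equicontinuity, continuity of $J$ preserving equicontinuity via the transpose, and closedness of equicontinuous sets under the weak-$\ast$ topology), so the crux of the argument is packaging the equicontinuity of the net into the right boundedness estimate.
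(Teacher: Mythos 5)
Your proof is correct, and it takes a genuinely different --- namely dual --- route from the paper's. The paper argues primally: given a bornivorous barrel $T_0$ in $X_0$, it forms $T:=\bigcap_{\alpha\in A}S_\alpha^{-1}(T_0)$, checks that $T$ is a barrel in $X$ (absolute convexity and closedness follow from linearity and continuity of the $S_\alpha$; absorbance because equicontinuity makes each orbit $\{S_\alpha(x):\alpha\in A\}$ bounded in $X_0$ and $T_0$ is bornivorous), invokes barrelledness of $X$ to make $T$ a $0$-neighborhood, and then uses $S_\alpha(J(x))\rightarrow x$ together with the closedness of $T_0$ to obtain $J^{-1}(T)\subseteq T_0$, so that continuity of $J$ finishes the proof. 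Your argument is essentially the polar image of this: your strongly bounded set $B'$ plays the role of the polar of a bornivorous barrel, your transported family $\{\varphi\circ S_\alpha\}$ corresponds under polarity to the set $T$ above, barrelledness of $X$ enters through its dual formulation, and your final weak-$\ast$-closure step is the dual counterpart of the paper's appeal to the closedness of $T_0$. All your steps are sound: equicontinuous families carry bounded sets to uniformly bounded sets, so $\{\varphi\circ S_\alpha\}$ is indeed $\beta(X',X)$-bounded; in a barrelled space, strongly bounded (indeed weak-$\ast$ bounded) dual sets are equicontinuous; the transpose $J'$ preserves equicontinuity; and equicontinuous sets are stable under $\sigma(X_0',X_0)$-closure, since they sit inside a polar $U^{\circ}$, which is weak-$\ast$ closed. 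What the paper's version buys is economy and self-containedness: it needs only the definitions of barrel, bornivorous set and barrelledness, with no dual characterization of quasibarrelledness and no duality machinery. What yours buys is that the mechanism is made explicit on the dual side, where each hypothesis (equicontinuity, barrelledness, the approximation property of the net) is converted into a visible permanence statement about equicontinuous and bounded subsets of the duals.
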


\begin{proof} Let $T_0$ be a bornivorous barrel in $X_0$. We put $T:=\Bigcap{\alpha\in A}S_{\alpha}^{-1}(T_0)$. Since the $S_{\alpha}$ are linear and continuous, $T$ is has to be absolutely convex and closed. It is not hard to conclude that $T$ is also absorbing (and hence a barrel) utilizing that $(S_{\alpha})_{\alpha\in{}A}$ is equicontinuous. Since $X$ is barrelled, $T$ has to be a 0-neighborhood and using that $S_{\alpha}(J(x))\rightarrow x$ holds for each $x\in X_0$ it is easy to see that $J^{-1}(T)\subseteq T_0$ is valid which provides that $T_0$ is a 0-neighborhood.
\end{proof}
\noindent{}In the above setting, the mapping $J$ is nearly open in the sense of Pt\'{a}k (cf.~K\"othe \cite[p.~24]{KoetheII}), i.e.~for each 0-neighborhood $U$ in $X_0$, $\overline{J(U)}^{X}$ is a 0-neighborhood in $\overline{\im J}^X$. However, in all situations where we apply \ref{Abstract Lemma}, $J$ will turn out to be even an open mapping.

\begin{lem}\label{S_j equicontinuous net} Assume that we are in the balanced setting. The family $(S_j)_{j\in\mathbb{N}}$ of the Ces\`{a}ro means of the partial sums of the Taylor series is an equicontinuous net in the space $L(AH(G),(AH)_0(G))$ which satisfies $S_{j}(J(f))\rightarrow f$ for each $f\in (AH)_0(G)$, where $J\colon(AH)_0(G)\rightarrow AH(G)$ is the inclusion mapping.
\end{lem}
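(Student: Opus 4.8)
The plan is to verify the two assertions of Lemma~\ref{S_j equicontinuous net} separately: first that $(S_j)_{j\in\mathbb{N}}$ is an equicontinuous family in $L(AH(G),(AH)_0(G))$, and second that $S_j(J(f))\to f$ in $(AH)_0(G)$ for each $f\in(AH)_0(G)$. The key point, which must be settled before anything else, is that each $S_j$ genuinely \emph{maps} $AH(G)$ into $(AH)_0(G)$: since $S_j(f)$ is a polynomial of degree at most $j$ and we are in the balanced setting, $\mathbb{P}\subseteq H(a_{N,n})_0(G)$ for all $N,n$, so $S_j(f)\in(AH)_0(G)$ and the target space is correct.

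\emph{Equicontinuity.} The crucial estimate is $\|S_j(f)\|_{N,n}\leqslant\|f\|_{N,n}$, uniformly in $j$ and $N,n$. This is exactly the statement that the Ces\`{a}ro operators are contractions for each weighted sup-norm, and in the radial (balanced) setting it follows from the standard fact that Ces\`{a}ro means of homogeneous expansions are obtained by convolution against the (nonnegative) Fej\'er kernel on each torus $\{|\lambda|=1\}$; applying this circle-average representation $z\mapsto\lambda z$ and using radiality of $a_{N,n}$ together with the maximum estimate against $a_{N,n}(z)|f(z)|$ gives the contraction. Once this norm bound holds, equicontinuity of $(S_j)_j$ as a family $AH(G)\to(AH)_0(G)$ is immediate from the definitions of the projective-limit topologies: a $0$-neighborhood in $(AH)_0(G)$ is generated by some $\|\cdot\|_{N,n}$-ball, and its preimage under all $S_j$ contains the corresponding ball for the same seminorm in $AH(G)$.

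\emph{Convergence.} For fixed $f\in(AH)_0(G)$ one must show $\|S_j(f)-f\|_{N,n}\to0$ for every $N,n$. Here I would exploit precisely the defining o-growth property of $f$: given $\varepsilon>0$ choose a compact $K\subseteq G$ with $a_{N,n}(z)|f(z)|<\varepsilon$ off $K$. On $K$ one has $S_j(f)\to f$ uniformly (the Ces\`{a}ro means converge uniformly on compacta, as recalled in the excerpt), so the $\|\cdot\|_{N,n}$-norm over $K$ of the difference tends to $0$; off $K$ one controls $a_{N,n}|S_j(f)-f|$ by $a_{N,n}|S_j(f)|+a_{N,n}|f|$, bounding the first term via the contraction estimate applied to the restriction of $f$ outside $K$ and the second directly by $\varepsilon$. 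Combining the two regions yields $\limsup_j\|S_j(f)-f\|_{N,n}\leqslant C\varepsilon$, and letting $\varepsilon\to0$ gives convergence in each step, hence in the projective limit $(AH)_0(G)$.

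\textbf{The main obstacle} will be the off-$K$ estimate in the convergence argument: the contraction bound controls $a_{N,n}|S_j(f)|$ by $\|f\|_{N,n}$ globally, but that is not small, so one needs the sharper localized form showing that the Ces\`aro means inherit the \emph{smallness} of $a_{N,n}|f|$ outside a compact set rather than merely its boundedness. This is where the radial/Fej\'er-kernel convolution structure must be used carefully, since the averaging over circles $\{|\lambda|=1\}$ keeps points inside $G$ but can move a point from outside $K$ toward the boundary; one must arrange $K$ to be a suitable circled compact set so that the convolution of the off-$K$ part stays uniformly small against $a_{N,n}$. Properly setting up this circled compact set and the kernel estimate is the technical heart of the proof.
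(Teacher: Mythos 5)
Your core estimates are right, and your route is genuinely different from the paper's: where the paper reduces everything to the Banach-space level and simply cites Bierstedt--Bonet--Galbis (\cite[1.2.(b)]{BBG} for equicontinuity of $(S_j)_j\subseteq L(Ha_{N,n}(G),H(a_{N,n})_0(G))$ and \cite[1.2.(e)]{BBG} for $\|S_jf-f\|_{N,n}\to0$ when $f\in H(a_{N,n})_0(G)$), you re-prove both facts from the Fej\'er-kernel representation $[S_jf](z)=\int_0^{2\pi}f(e^{i\theta}z)F_j(\theta)\,\frac{d\theta}{2\pi}$. This works: positivity and normalization of $F_j$ together with radiality of $a_{N,n}$ give the contraction $\|S_jf\|_{N,n}\leqslant\|f\|_{N,n}$, and the ``main obstacle'' you flag in the convergence step is resolved exactly as you suggest --- replace the compact set $K_0$ by $K:=\{e^{i\theta}z\,:\,z\in K_0,\ \theta\in[0,2\pi]\}$, which is compact, contained in $G$ (balancedness) and rotation-invariant, so that $z\notin K$ implies $e^{i\theta}z\notin K$ for all $\theta$, whence $a_{N,n}(z)|S_jf(z)|\leqslant\sup_{w\notin K}a_{N,n}(w)|f(w)|<\varepsilon$. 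So your proposal in effect re-derives the two BBG lemmas the paper takes as input; what this buys is self-containedness, at the price of redoing classical work that is available by citation.

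There is, however, a genuine defect in how you pass from the norm estimate to equicontinuity in $L(AH(G),(AH)_0(G))$. Your claim that ``a $0$-neighborhood in $(AH)_0(G)$ is generated by some $\|\cdot\|_{N,n}$-ball'' is false: the topology of the PLB-space is not generated by the countably many norms $\|\cdot\|_{N,n}$ (otherwise $(AH)_0(G)$ would be metrizable and each LB-step normable); the balls $B^{\circ}_{N,n}$ are bounded sets, not $0$-neighborhoods, in the inductive limit $(\mathcal{A}_N)_0H(G)$, and $\|\cdot\|_{N,n}$ is not even finite on all of $(AH)_0(G)$. The correct deduction --- which is precisely the content of the paper's proof --- needs two steps: first, equicontinuity of a family of linear maps defined on an LB-space can be checked on the Banach steps (Horv\'ath \cite[Proposition 3.4.5]{Horvath}), which upgrades your contraction to equicontinuity of $(S_j)_j\subseteq L(\mathcal{A}_NH(G),(\mathcal{A}_N)_0H(G))$; second, a $0$-neighborhood of the projective limit is the trace $V'\cap(AH)_0(G)$ of a $0$-neighborhood $V'$ of some step $(\mathcal{A}_N)_0H(G)$, so that $U:=U'\cap AH(G)$ with $S_j(U')\subseteq V'$ for all $j$ does the job. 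A similar (but harmless) imprecision occurs in your convergence part: one cannot ask for $\|S_jf-f\|_{N,n}\to0$ ``for every $N,n$'', since $f\in(AH)_0(G)$ only guarantees that for each $N$ there is \emph{some} $n$ with $f\in H(a_{N,n})_0(G)$; convergence in that one Banach step suffices, because it implies convergence in the LB-step and then, $N$ being arbitrary, in the projective limit.
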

\begin{proof}By \cite[1.2.(b)]{BBG} the sequence $(S_j)_{j\in\mathbb{N}}\subseteq{}L(Ha_{N,n}(G),H(a_{N,n})_0(G))$ is an equicontinuous net for all $N$ and $n$.
\smallskip
\\We fix $N$ and claim that $(S_{j})_{j\in\mathbb{N}}\subseteq L(\mathcal{A}_NH(G),(\mathcal{A}_N)_0H(G))$ is equicontinuous. By Horv\'{a}th \cite[Proposition~3.4.5]{Horvath} it is enough to show that $(S_{j})_{j\in\mathbb{N}}\subseteq L(Ha_{N,n}(G),\linebreak{}(\mathcal{A}_N)_0H(G))$ is equicontinuous for each $n$. For fixed $n$ let $V\subseteq(\mathcal{A}_N)_0H(G)$ be a 0-neighborhood. Then $V\cap H(a_{N,n})_0(G)$ is a 0-neighborhood in $H(a_{N,n})_0(G)$. By our assumptions there exists a 0-neighborhood $U$ in $Ha_{N,n}(G)$ such that $S_{j}(U)\subseteq V\cap H(a_{N,n})_0(G)\subseteq V$ for each $j$, which establishes the claim.
\smallskip
\\Now let $V$ be a 0-neighborhood in $(AH)_0(G)$. Then there exist $N$ and a 0-neighborhood $V'$ in $(\mathcal{A}_N)_0H(G)$ such that $V=V'\cap (AH)_0(G)$. By the above there exists a 0-neighborhood $U'$ in $(\mathcal{A}_N)_0H(G)$ such that $S_{j}(U')\subseteq V'$ for each $j$. We put $U:=U'\cap AH(G)$, which is a 0-neighborhood in $AH(G)$ and obtain $S_{j}(U)=S_{j}(U'\cap AH(G))\subseteq V'\cap (AH)_0(G)=V$ for each $j$ and have shown that $(S_j)_{j\in\mathbb{N}}\subseteq L(AH(G),(AH)_0(G))$ is equicontinuous.
\smallskip
\\Let finally $f\in(AH)_0(G)$ and $N$ be arbitrary. Then there exists $n$ such that $f\in H(a_{N,n})_0H(G)$. Since $S_jf$ is a polynomial, we have $S_jf\!\in\! H(a_{N,n})_0H(G)$. By \cite[1.2.(e)]{BBG}, $S_jf\rightarrow f$ holds in $H(a_{N,n})_0H(G)$ and thus also in $(\mathcal{A}_N)_0H(G)$. Because $N$ was arbitrary, we obtain $S_jf\rightarrow f$ in $(AH)_0(G)$.
\end{proof}

\begin{cor}\label{barrelled-barrelled} Assume that we are in the balanced setting and assume $AH(G)$ to be barrelled. Then $(AH)_0(G)$ is barrelled.
\end{cor}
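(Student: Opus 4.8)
The plan is to prove barrelledness directly, by reworking the proof of Lemma~\ref{Abstract Lemma} for an \emph{arbitrary} barrel instead of only a bornivorous one. I apply the set-up with $X=AH(G)$ (barrelled by hypothesis), $X_0=(AH)_0(G)$, $J$ the inclusion, and the equicontinuous net $(S_j)_{j\in\mathbb{N}}\subseteq L(AH(G),(AH)_0(G))$ provided by Lemma~\ref{S_j equicontinuous net}. A plain appeal to Lemma~\ref{Abstract Lemma} would only yield that $(AH)_0(G)$ is quasibarrelled; the additional work consists in showing that the construction used there still produces a genuine barrel in $AH(G)$ when the given barrel $T_0\subseteq(AH)_0(G)$ is not assumed bornivorous.

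So let $T_0$ be a barrel in $(AH)_0(G)$ and put $T:=\Bigcap{j\in\mathbb{N}}S_j^{-1}(T_0)\subseteq AH(G)$. Exactly as in Lemma~\ref{Abstract Lemma}, $T$ is absolutely convex and closed, and everything hinges on showing that $T$ is \emph{absorbing}, i.e.~that $T_0$ absorbs the orbit $\{S_jf\;;\;j\in\mathbb{N}\}$ for each fixed $f\in AH(G)$. Here the balanced setting enters decisively. Since every $S_jf$ is a polynomial and $\mathbb{P}\subseteq H(a_{N,n})_0(G)$ for all $N,n$, and since by \cite[1.2.(b)]{BBG} the $S_j$ are uniformly bounded as maps $Ha_{N,n}(G)\to H(a_{N,n})_0(G)$, choosing for each $N$ an index $n(N)$ with $f\in Ha_{N,n(N)}(G)$ yields constants $C_N$ with $\|S_jf\|_{N,n(N)}\leqslant C_N$ for all $j$. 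Hence every element of the closed absolutely convex hull $D_f$ of $\{S_jf\;;\;j\in\mathbb{N}\}$ in $(AH)_0(G)$ lies in $H(a_{N,n(N)})_0(G)$ with $\|\cdot\|_{N,n(N)}\leqslant C_N$ for all $N$, that is $D_f\subseteq\Bigcap{N}C_N B_{N,n(N)}^{\circ}$. I then claim $D_f$ is a Banach disk: a $p_{D_f}$-Cauchy sequence is Cauchy in each of the Banach spaces $H(a_{N,n(N)})_0(G)$, its limits are mutually compatible and define an element of $\Bigcap{N}H(a_{N,n(N)})_0(G)\subseteq(AH)_0(G)$, to which the sequence converges for $p_{D_f}$; so $E_{D_f}$ is complete.

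With $D_f$ recognized as a Banach disk, the absorbency of $T$ follows from the standard fact that a barrel absorbs every Banach disk: $T_0\cap E_{D_f}$ is a closed, absolutely convex, absorbing subset of the Banach space $E_{D_f}$, hence a $0$-neighbourhood there (Banach spaces being barrelled), so $T_0$ absorbs $D_f\supseteq\{S_jf\}$ and therefore $T$ absorbs $f$. Thus $T$ is a barrel in $AH(G)$; as $AH(G)$ is barrelled, $T$ is a $0$-neighbourhood, and so is $J^{-1}(T)$ in $(AH)_0(G)$. Finally $J^{-1}(T)\subseteq T_0$, since $f\in J^{-1}(T)$ means $S_j(J(f))\in T_0$ for all $j$, while $S_j(J(f))\to f$ by Lemma~\ref{S_j equicontinuous net} and $T_0$ is closed; hence $T_0$ is a $0$-neighbourhood and $(AH)_0(G)$ is barrelled. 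The step I expect to be the main obstacle is precisely the absorbency of $T$: because the inductive limits $(\mathcal{A}_N)_0H(G)$ need not be regular, one cannot shortcut the argument through ``quasibarrelled plus locally complete'' (arbitrary bounded sets of $(AH)_0(G)$ need not sit in Banach disks), and the real content is that the \emph{polynomial} orbits $\{S_jf\}$ do, owing to $\mathbb{P}$ being contained in every o-step at the fixed level $n(N)$.
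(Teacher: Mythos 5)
Your strategy is genuinely different from the paper's proof, which feeds Lemmas \ref{Abstract Lemma} and \ref{S_j equicontinuous net} through quasibarrelledness and then invokes Vogt \cite[3.1]{Vogt1989}: since the spectrum $\mathcal{A}_0H$ is reduced, quasibarrelledness of $(AH)_0(G)$ is already equivalent to barrelledness. You bypass reducedness and Vogt's theorem altogether and upgrade the construction of Lemma \ref{Abstract Lemma} to handle an arbitrary barrel, the new ingredient being that barrels absorb Banach disks. The skeleton is sound: the bounds $\|S_jf\|_{N,n(N)}\leqslant C_N$ do follow from \cite[1.2.(b)]{BBG}, the orbit lies in every o-step, and the endgame ($T$ is a barrel, hence a $0$-neighbourhood, and $J^{-1}(T)\subseteq T_0$ by closedness of $T_0$) is correct. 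But there is a genuine gap exactly at the step you single out as the main content. Your disk must have two properties simultaneously: the containment $D_f\subseteq\bigcap_{N}C_NB^{\circ}_{N,n(N)}$ (so that $p_{D_f}$-Cauchy sequences are norm-Cauchy) and closedness of $D_f$ in $(AH)_0(G)$ (so that the norm-limit is a $p_{D_f}$-limit). The plain absolutely convex hull of the orbit has the first property, its closure has the second, but nothing gives both: a net in $C_NB^{\circ}_{N,n(N)}$ which converges in $(AH)_0(G)$ converges pointwise, so its limit stays in the O-ball $C_NB_{N,n(N)}$, yet there is no reason why $a_{N,n(N)}|g|$ should still vanish at infinity --- the limit is only known to lie in $H(a_{N,m})_0(G)$ for some possibly larger $m$. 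This is precisely the o-ball pathology the paper stresses in Section \ref{Sufficient} (the co-closure of $B^{\circ}_{N,n}$ is all of $B_{N,n}$); closedness of the o-balls in the finer topology of $(AH)_0(G)$ is not thereby excluded, but nothing in the balanced setting provides it, so the inclusion $D_f\subseteq\bigcap_{N}C_NB^{\circ}_{N,n(N)}$ for the \emph{closed} hull is unjustified.

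The repair is small and keeps your approach intact: take as disk $D:=\bigcap_{N}C_NB^{\circ}_{N,n(N)}$ itself. It is absolutely convex and contains the orbit, and its Banach-disk property can be verified norm by norm, with no appeal to closedness in $(AH)_0(G)$: if $(h_k)$ is $p_D$-Cauchy, then $\|h_k-h_l\|_{N,n(N)}\leqslant C_N\,p_D(h_k-h_l)$ shows it is Cauchy in each Banach space $H(a_{N,n(N)})_0(G)$; the limits agree pointwise and give $g\in\bigcap_{N}H(a_{N,n(N)})_0(G)\subseteq(AH)_0(G)$; and from $h_k-h_l\in\varepsilon D\subseteq\varepsilon C_NB^{\circ}_{N,n(N)}$ for large $k,l$, letting $l\rightarrow\infty$ and using that $C_NB^{\circ}_{N,n(N)}$ is norm-closed in its own Banach space, one gets $h_k-g\in\varepsilon C_NB^{\circ}_{N,n(N)}$ for every $N$, i.e.\ $h_k-g\in\varepsilon D$. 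The same estimates make the inclusion $E_D\rightarrow(AH)_0(G)$ continuous, so $T_0\cap E_D$ is a barrel in the Banach space $E_D$ and your absorption argument runs. With this change the proof is correct and is a genuine alternative: it is self-contained where the paper outsources the hard part to the reducedness of $\mathcal{A}_0H$ plus Vogt's theorem --- a mechanism your closing remark (which worries instead about local completeness) seems not to have in view.
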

\begin{proof}\ref{Abstract Lemma} and \ref{S_j equicontinuous net} provide that $(AH)_0(G)$ is quasibarrelled. By Vogt \cite[3.1]{Vogt1989} for $(AH)_0(G)$ quasibarrelledness is equivalent to barrelledness, since the projective spectrum in the o-growth case is reduced.
\end{proof}

\begin{thm}\label{strong necessary condition} Assume that we are in the balanced setting. Then we have the implications (i)$\Rightarrow$(ii)$\Rightarrow$(iii)$\Rightarrow$(iv), where\vspace{3pt}
\\\begin{tabular}{rlrl}\vspace{1pt}
(i)   & $\Projeins \mathcal{A}H=0$,      & (iii) & $AH(G)$ is barrelled, \\
(ii)  & $AH(G)$ is ultrabornological,    & (iv)  &$\mathcal{A}$ satisfies condition $\text{(wQ)}^{\sim}_{\text{in}}$. \\
\end{tabular}
\end{thm}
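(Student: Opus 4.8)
The plan is to mirror the architecture of Theorem \ref{o-growth-necessary-conditions}, handling the first two implications by general homological theory and then bootstrapping the decisive last implication through the o-growth case rather than attacking it intrinsically. For (i)$\Rightarrow$(ii) I would invoke Wengenroth \cite[3.3.4]{Wengenroth} (cf.~Vogt \cite[5.7]{VogtLectures}), which for a countable projective spectrum of LB-spaces guarantees that $\Projeins\mathcal{A}H=0$ forces the projective limit $AH(G)$ to be ultrabornological; this reference is the very same one used in the o-growth proof and applies here verbatim since nothing about o-growth entered it. The implication (ii)$\Rightarrow$(iii) is then immediate, as every ultrabornological space is barrelled.

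The delicate step is (iii)$\Rightarrow$(iv), and here the route taken in the o-growth case is blocked: that argument used reducedness of the spectrum together with Wengenroth \cite[3.3.6]{Wengenroth} to extract condition $\text{(P}_{\!2}^{\star}\text{)}$ and then tested it against point evaluations $\delta_z$. As the remark after Theorem \ref{o-growth-necessary-conditions} stresses, we have no information about reducedness of $\mathcal{A}H$, so this cannot be repeated. My idea is to avoid the issue entirely by pushing barrelledness from the O-growth space over to the o-growth space. Concretely, assuming $AH(G)$ is barrelled, Corollary \ref{barrelled-barrelled} applies at once and yields that $(AH)_0(G)$ is barrelled; its proof rests precisely on the equicontinuous net of Ces\`{a}ro means supplied by Lemma \ref{S_j equicontinuous net} together with the abstract quasibarrelledness criterion of Lemma \ref{Abstract Lemma}, both of which are available in the balanced setting.

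Once $(AH)_0(G)$ is known to be barrelled, I would simply feed this into the implication (iii)$\Rightarrow$(iv) of Theorem \ref{o-growth-necessary-conditions}. The key observation that makes this work is that condition (iv) is literally identical in both theorems---in each case it asserts that the one double sequence $\mathcal{A}$ satisfies $\text{(wQ)}^{\sim}_{\text{in}}$---so the conclusion about the weights transfers without any modification. Thus the entire difficulty of the O-growth case is absorbed into the already-established o-growth result, with Corollary \ref{barrelled-barrelled} acting as the bridge between the two settings.

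I expect the main obstacle to be exactly the failure of reducedness for $\mathcal{A}H$, and the essential insight of the proof is that one need not circumvent it by hand: the Ces\`{a}ro decomposition of holomorphic functions in the balanced setting lets barrelledness descend from the O-growth space to the o-growth space, after which the weight characterization is inherited. The only points requiring care are checking that \cite[3.3.4]{Wengenroth} is genuinely insensitive to the o-/O-distinction and that the quantifier structure of $\text{(wQ)}^{\sim}_{\text{in}}$ coincides in both statements, both of which I anticipate to be routine.
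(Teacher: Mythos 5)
Your proposal is correct and is essentially identical to the paper's own proof: the paper deduces the theorem directly from Theorem \ref{o-growth-necessary-conditions} and Corollary \ref{barrelled-barrelled}, i.e.~it uses the Ces\`{a}ro-means bridge to transfer barrelledness from $AH(G)$ to $(AH)_0(G)$ and then inherits $\text{(wQ)}^{\sim}_{\text{in}}$ from the o-growth result, exactly as you describe. Your handling of (i)$\Rightarrow$(ii) via Wengenroth \cite[3.3.4]{Wengenroth} also matches the paper's (implicit) reasoning.
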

\begin{proof}The result follows directly from \ref{o-growth-necessary-conditions} and \ref{barrelled-barrelled}.
\end{proof}

%%%%%%%%%%%%%%%%%%%%%%%%%%%%%%%%%%%%%%%%%%%%%%%%%%%%%%%%%%%%%%%%%%%%%%%%%%%%%%%%%%%%%%%%%%%%%%%%%%%%
%%                                                                                                %%
%% Sufficient conditions for ultrabornologicity and barrelledness: Results for the unit disc      %%
%%                                                                                                %%
%%%%%%%%%%%%%%%%%%%%%%%%%%%%%%%%%%%%%%%%%%%%%%%%%%%%%%%%%%%%%%%%%%%%%%%%%%%%%%%%%%%%%%%%%%%%%%%%%%%%

\section{\hspace{-10pt}Sufficient conditions for ultrabornologicity and barrelledness: Results for the unit disc}\label{Sufficient}

To find sufficient conditions for the vanishing of $\Projeins \mathcal{A}H$ and for barrelledness of $(AH)_0(G)$, we need to decompose holomorphic functions. In the case of the unit disc, a decomposition suitable for our purposes is possible if we assume that our defining sequence $\mathcal{A}$ belongs to some set of weights $W$ which is assumed to be of \textit{class $\mathcal{W}$} defined by Bierstedt, Bonet \cite{BB2003}. That is, we assume that $W$ consists of radial weights and further that each $w\in W$ satisfies $\lim_{r\nearrow 1}w(r)=0$ and is non-increasing if restricted to $[0,1[$. We assume $W$ to be stable under multiplication with strictly positive scalars and under the formation of finite minima. Next, we assume that there exists a sequence of linear and continuous operators $(R_n)_{n\geqslant1}$, $R_n\colon (H(\mathbb{D}),\text{co})\rightarrow (H(\mathbb{D}),\text{co})$ such that for $n\geqslant1$ the image of $R_n$ is a finite dimensional subspace of the space $\mathbb{P}$ of polynomials on 
$\mathbb{D}$. Further we assume that for each $p\in \mathbb{P}$ there exists $n$ with $R_np=p$ and that $R_n\circ R_m =R_{\min(n,m)}$ holds for arbitrary $n,\,m\geqslant1$. Moreover, we require that there is $c>0$ such that for each $n\geqslant1$, $r\in\,]0,1[$ and $p\in\mathbb{P}$ the estimate $\sup_{|z|=r}|[R_np](z)|\leqslant c\sup_{|z|=r}|p(z)|$ holds. By setting $R_0:=0$ and $r_n:=1-2^{-n}$ for $n\geqslant0$ we get a system $(R_n,r_n)_{n\geqslant0}$ which is assumed to satisfy the following two conditions.
\begin{itemize}
\item[(P1)]
$
\exists \: C\geqslant 1\; \forall \: v\in W,\,p\in \mathbb{P}\colon
$
$$
{\textstyle\frac{1}{C}} \sup_{n\in\mathbb{N}}\big(v(r_n)\sup_{| z|=r_n}\big| [(R_{n+2}-R_{n-1})p](z)\big|\big) \leqslant \sup_{z\in\mathbb{D}} v(z)| p(z)|,
$$
$$
\sup_{z\in\mathbb{D}}v(z)| p(z)|\leqslant C \sup_{n\in\mathbb{N}}\big(v(r_n)\sup_{| z| = r_n}\big|[(R_{n+1}-R_n)p](z)\big|\big).
$$
\item[(P2)]
$
\forall \: v\in W\; \exists \: D(v)\geqslant 1 \;\forall \: (p_n)_{n\in\mathbb{N}}\subseteq\mathbb{P},\, p_n\not=0
$
only for finitely many $n\colon$
$$
\sup_{z\in\mathbb{D}} v(z)\big|\Bigsum{n=1}{\infty}[(R_{n+1}-R_n)p_n](z)\big|\leqslant D(v)\sup_{k\in\mathbb{N}}\big(v(r_k)\sup_{|z|=r_k}| p_k(z)|\big).
$$
\end{itemize}
Note that for a system of weights in $W$ the requirements of the balanced setting are automatically satisfied. Moreover, Theorem's 3.1 and 4.1 of Bierstedt, Bonet \cite{BB2003} and the results of Bierstedt, Meise, Summers \cite{BMS1982} imply that for $\mathcal{A}\subseteq W$, $\mathcal{A}_NH(\mathbb{D})\subseteq\mathcal{A}_NC(\mathbb{D})$ and $(\mathcal{A}_N)_0H(\mathbb{D})\subseteq (\mathcal{A}_N)_0C(\mathbb{D})$ are all topological subspaces. Here, $\mathcal{A}_NC(\mathbb{D})$ and $(\mathcal{A}_N)_0C(\mathbb{D})$ denote weighted LB-spaces of continuous functions which are defined analogously to $\mathcal{A}_NH(\mathbb{D})$ and $(\mathcal{A}_N)_0H(\mathbb{D})$ but \textquotedblleft{}holomorphic\textquotedblright{} replaced with \textquotedblleft{}continuous\textquotedblright{}.
\medskip
\\Our investigation in Section \ref{Necessary} has shown that concerning necessary conditions for the vanishing of $\Projeins$ the o-growth case was easier to handle than the O-growth case, since the spectrum $\mathcal{A}_0H$ is reduced. However, also in the O-growth case the balanced setting allowed to prove \textquotedblleft{}the same\textquotedblright{} result: In both situations, $\text{(wQ)}^{\sim}_{\text{in}}$ is necessary for barrelledness. As we will see in the sequel for sufficient conditions the situation is the other way round, that is the O-growth case is the easier one. But the situation is not symmetric: We are not able to prove sufficient conditions for $\Projeins \mathcal{A}_0H=0$ at all.

\begin{thm}\label{Q-out-sim implies Projeins Null}Let $\mathcal{A}$ be a sequence in $W$ and assume that $\mathcal{A}$ satisfies condition $\text{(Q)}^{\sim}_{\text{out}}$. Then $\Projeins \mathcal{A}H=0$.
\end{thm}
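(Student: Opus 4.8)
\emph{Strategy.} The plan is to verify for the LB-spectrum $\mathcal{A}H$ the homological ball-covering condition with the quantifiers of $\text{(Q)}^{\sim}_{\text{out}}$ which, by Vogt's criterion for the vanishing of $\Projeins$ (cf.~\cite[Chapter~3]{Wengenroth} and \cite{Vogt1989}), forces $\Projeins\mathcal{A}H=0$: for each $N$ one produces $M\geqslant N$ and $n$ such that for all $K\geqslant M$, $m$ and $\varepsilon>0$ there are $k$ and $S>0$ with
\[
B_{M,m}\;\subseteq\;\overline{\varepsilon B_{N,n}+SB_{K,k}},
\]
the closure being taken in $\mathcal{A}_NH(\mathbb{D})$. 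These are exactly the quantifiers of $\text{(Q)}^{\sim}_{\text{out}}$, so the whole task is to convert that weight inequality into the above inclusion of balls.

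\emph{Reformulating the hypothesis.} First I would record that $\big(\frac{1}{a_{M,m}}\big)^{\sim}$ is the envelope of the unit ball, i.e.\ every $f\in B_{M,m}$ satisfies $|f(z)|\leqslant\big(\frac{1}{a_{M,m}}\big)^{\sim}(z)$ on $\mathbb{D}$, and that always $(\cdot)^{\sim}\leqslant(\cdot)$. Inserting the quantifiers of $\text{(Q)}^{\sim}_{\text{out}}$ and evaluating at the radii $r_j$ (the associated weight of a radial weight is again radial) then yields, for every $j$,
\[
\big(\tfrac{1}{a_{M,m}}\big)^{\sim}(r_j)\;\leqslant\;\big(\max\big(\tfrac{\varepsilon}{a_{N,n}},\tfrac{S}{a_{K,k}}\big)\big)^{\sim}(r_j)\;\leqslant\;\max\Big(\tfrac{\varepsilon}{a_{N,n}(r_j)},\tfrac{S}{a_{K,k}(r_j)}\Big).
\]

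\emph{The decomposition.} Fix a polynomial $f\in B_{M,m}$ and write $f=\sum_{j}\Delta_jf$ with $\Delta_j:=R_{j+1}-R_j$, a finite sum since $R_jf=f$ for large $j$. I would partition $\mathbb{N}=I_1\cup I_2$ according to which of the two terms realises the maximum above and set $f_1:=\sum_{j\in I_1}\Delta_jf$ and $f_2:=\sum_{j\in I_2}\Delta_jf$, so that $f=f_1+f_2$. Applying (P2) with $p_j:=f$ for $j\in I_1$ and $p_j:=0$ otherwise gives
\[
\|f_1\|_{N,n}\;\leqslant\;D(a_{N,n})\sup_{j\in I_1}a_{N,n}(r_j)\sup_{|z|=r_j}|f(z)|\;\leqslant\;D(a_{N,n})\sup_{j\in I_1}a_{N,n}(r_j)\big(\tfrac{1}{a_{M,m}}\big)^{\sim}(r_j),
\]
and on $I_1$ the estimate above bounds the last supremum by $\varepsilon$, so $\|f_1\|_{N,n}\leqslant D(a_{N,n})\,\varepsilon$; symmetrically $\|f_2\|_{K,k}\leqslant D(a_{K,k})\,S$. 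Hence $f\in D(a_{N,n})\varepsilon B_{N,n}+D(a_{K,k})SB_{K,k}$. Rescaling $\varepsilon$ and $S$ to absorb the constants $D(a_{N,n})$, $D(a_{K,k})$, and passing from polynomials to arbitrary $f\in B_{M,m}$ by density (approximating with the Ces\`{a}ro means $S_jf$, which remain in a fixed multiple of $B_{M,m}$) and taking closures, one obtains the required inclusion and therefore $\Projeins\mathcal{A}H=0$.

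\emph{Main obstacle.} The crux is the decomposition: turning the pointwise majorisation of associated weights into a genuine splitting of $f$ into two \emph{holomorphic} summands lying in the prescribed balls. Two features make this work and are exactly why the result is restricted to $\mathbb{D}$ and to class $\mathcal{W}$. First, the identification of the ball envelope with the associated weight is what forces the tildes into the hypothesis and lets the bound on $f$ be expressed through $\big(\frac{1}{a_{M,m}}\big)^{\sim}$. Second, property (P2) guarantees that an \emph{arbitrary} regrouping $\sum_{j\in I_i}\Delta_jf$ of the block expansion is again a bounded holomorphic function with controlled norm; without it the cut $I_1\cup I_2$ would destroy both holomorphy and the norm estimate. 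Matching the $\max$-form of $\text{(Q)}^{\sim}_{\text{out}}$ with the additive form of Vogt's condition and keeping track of the constants is then only bookkeeping.
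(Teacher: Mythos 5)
Your strategy is the same as the paper's: verify the Braun--Vogt condition $(\overline{P_2})$ of \cite{BraunVogt1997} for the spectrum $\mathcal{A}H$, by combining the envelope property $|f|\leqslant\big(\tfrac{1}{a_{M,m}}\big)^{\sim}\leqslant\max\big(\tfrac{\varepsilon}{a_{N,n}},\tfrac{S}{a_{K,k}}\big)$ of the unit ball with a partition of the radii $r_j$ and an application of (P2) to the two regrouped pieces of the block decomposition. Your way of invoking (P2) --- taking $p_j:=f$ itself, so that the required bound $a_{N,n}(r_j)\sup_{|z|=r_j}|f(z)|\leqslant a_{N,n}(r_j)\big(\tfrac{1}{a_{M,m}}\big)^{\sim}(r_j)\leqslant\varepsilon$ on $I_1$ comes straight from the hypothesis --- is in fact leaner than the paper's, which obtains the same block bounds via the first inequality of (P1) applied to the auxiliary polynomials $(R_{\nu+2}-R_{\nu-1})S_tf$. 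However, your decomposition has a genuine hole at the index $0$. Since $R_0=0$, the telescoping identity is $f=R_1f+\sum_{j\geqslant1}(R_{j+1}-R_j)f$, whereas (P2) only controls sums of blocks $(R_{n+1}-R_n)p_n$ with $n\geqslant1$. So either your sum starts at $j=1$, and then $f_1+f_2\neq f$ (the term $R_1f$ is missing), or it starts at $j=0$, and then (P2) cannot be applied to the piece containing that index. This is exactly why the paper treats the ``first summand'' $R_1S_tf$ separately, using the uniform estimate $\sup_{|z|=r}|[R_1p](z)|\leqslant c\sup_{|z|=r}|p(z)|$ together with the second inequality of (P1) to place $R_1S_tf$ in a controlled multiple of $B_{N,n}$ or of $B_{K,k}$; some such argument (alternatively: the image of $R_1$ is finite dimensional, so all norms on it are equivalent) must be supplied before the splitting $f=f_1+f_2$ with the stated bounds is established. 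A smaller point in the same step: (P2) is stated for sequences with only finitely many nonzero terms, so you should truncate $p_j$ beyond an index with $R_jf=f$.

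The second gap is the limiting step. The Ces\`{a}ro means $S_jf$ converge to $f$ only with respect to co, not in $\mathcal{A}_NH(\mathbb{D})$, so ``density plus taking closures'' does not yield $B_{M,m}\subseteq\overline{\varepsilon B_{N,n}+SB_{K,k}}$ with the closure taken in $\mathcal{A}_NH(\mathbb{D})$, as your formulation of the criterion requires; from co-convergence you only reach the co-closure, which a priori is larger. The paper's resolution, which you need, is that $B_{N,n}$ and $B_{K,k}$ are co-compact (Montel's theorem, the weights being bounded below on compacta), hence $\varepsilon B_{N,n}+SB_{K,k}$ is co-compact and therefore co-closed; consequently the co-limit $f$ of the polynomials already lies in $\varepsilon B_{N,n}+SB_{K,k}$ itself, and one obtains the inclusion without any closure --- which is precisely the hypothesis of \cite[Theorem 8]{BraunVogt1997}. (If you insist on a closed version of the criterion, you must cite the variant of Frerick--Wengenroth \cite{FrerickWengenroth1996} allowing closures with respect to a weaker Hausdorff topology such as co, and check its quantifiers; the closure in the LB-space topology that you wrote is not what your approximation argument delivers.) Both gaps are repairable with arguments present in the paper, but as written the proof is incomplete at these two points.
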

\begin{proof}
In order to show that $\Projeins \mathcal{A}H=0$ we use Braun, Vogt \cite[Theorem 8]{BraunVogt1997} (which was independently obtained by Frerick, Wengenroth \cite{FrerickWengenroth1996}). That is, we have to show condition $\text{(}\overline{\text{P}_{\!2}}\text{)}$
$$
\forall \: N \; \exists \: M,\,n \; \forall \: K,\,m,\,\varepsilon>0 \; \exists \: k,\,S>0\: \colon \: B_{M,m}\subseteq \varepsilon B_{N,n}+SB_{K,k}.
$$
For given $N$ we select $M$ and $n$ as in $\text{(Q)}^{\sim}_{\text{out}}$. For given $K,\,m,\,\varepsilon>0$ we put $\varepsilon':=\frac{\varepsilon}{(D_1+2c^2)C}$ and choose $K$ and $S'>0$ according to $\text{(Q)}^{\sim}_{\text{out}}$ w.r.t.~$\varepsilon'$ and put $S:=S'(2c^2+D_2)$. Now we fix an arbitrary $f\in B_{M,m}$ and consider $S_tf$. We have $a_{M,m}|S_tf|\leqslant a_{M,m}|f|\leqslant 1$, i.e.~$|S_tf|\leqslant \frac{1}{a_{M,m}}$. With \cite[1.2.(iii)]{BBT} it follows $|S_tf|\leqslant (\frac{1}{a_{M,m}})^{\sim}$ and by the estimate in $\text{(Q)}^{\sim}_{\text{out}}$ we obtain $|S_tf|\leqslant\max(\varepsilon'(\frac{1}{a_{N,n}}),\,S'(\frac{1}{a_{K,k}}))^{\sim}\leqslant\max(\frac{\varepsilon'}{a_{N,n}},\frac{S'}{a_{K,k}})$ where the last estimate follows from \cite[1.2.(i)]{BBT}. We put $u_1:=\frac{a_{N,n}}{\varepsilon'}$, $u_2:=\frac{a_{K,k}}{S'}$ and $u:=\min(u_1,\,u_2)$. Then the above transforms to $|S_tf|\leqslant \max(\frac{1}{u_1},\frac{1}{u_2})=\frac{1}{u}$, i.e.~$u|S_tf|\leqslant 1$. As $W$ 
is closed under the formation of finite minima and under multiplication with positive scalars, $u\in W$ holds.
\smallskip
\\From now on we use the decomposition method invented by Bierstedt, Bonet \cite{BB2003}: We decompose $S_tf=R_1S_tf+\sum_{\nu=1}^{\infty}(R_{\nu+1}-R_{\nu})S_tf$ where both summands are polynomials.
\smallskip
\\Let us study the first summand: By the estimate previous to (P1) there exists $c>0$ such that $\sup_{|z|=r_1}|[R_1S_tf](z)|\leqslant c \sup_{|z|=r_1}|S_tf(z)|$. We multiply with $u(r_1)$ and use $u|S_tf|\leqslant 1$ to get $u(r_1)\sup_{|z|=r_1}|[R_1S_tf](z)|\leqslant c u(r_1) \sup_{|z|=r_1}|S_tf(z)|\leqslant 1$. By the definition of $u$ we obtain $u(r_1)=\min(u_1(r_1), u_2(r_2))$. Let $i\in\{1,\,2\}$ such that $u(r_1)=u_i(r_1)$. Now we use the second inequality of (P1) to get $C\geqslant 1$ such that
\begin{align*}
\sup_{z\in\mathbb{D}}u_i(z)|R_1S_tf(z)|\;& \leqslant  \;C\sup_{n\in\mathbb{N}}\big(u_i(r_n)\sup_{|z|=r_n}|[(R_{n+1}-R_n)R_1S_tf](z)|\big)\\
& =          \;C u(r_1)\sup_{|z|=r_1}|[(R_2-R_1)R_1S_tf](z)| \\
& \leqslant  \;2cC u(r_1)\sup_{|z|=r_1}|R_1S_tf(z)| \\
&\leqslant   \;2c^2C.
\end{align*}
By the definition of the $u_i$ and the choice of $i$ we get $\sup_{z\in\mathbb{D}}a_{N,n}(z)|R_1S_tf(z)|\linebreak\leqslant 2c^2C\varepsilon'$ or $\sup_{z\in\mathbb{D}}a_{K,k}(z)|R_1S_tf(z)|\leqslant 2c^2CS$, i.e.~$R_1S_tf\in 2c^2C\varepsilon' B_{N,n}$ or $R_1S_tf\in 2c^2CS B_{K,k}$.
\smallskip
\\Now we consider $S_tf-R_1S_tf=\sum_{\nu=1}^{\infty}(R_{\nu+1}-R_{\nu})S_tf$. We use the first inequality of (P1) for $u$ and $S_tf$ to get with the same $C\geqslant 1$ as above that $\frac{1}{C}\sup_{\nu\in\mathbb{N}}(u(r_{\nu})\sup_{|z|=r_{\nu}}|[(R_{\nu+2}-R_{\nu-1})S_tf](z)|)\leqslant \sup_{z\in\mathbb{D}}u(z)|S_tf(z)|\leqslant 1$ holds, i.e.~for each $\nu$ we have $u(r_{\nu})\sup_{|z|=r_{\nu}}\linebreak{}|[(R_{\nu+2}-R_{\nu-1})S_tf](z)|\leqslant C$. Now we write $\mathbb{N}=J_1\,\dot{\cup}\,J_2$ such that $u(r_j)= u_1(r_j)$ for $j\in J_1$ and $u(r_j)=u_2(r_j)$ for $j\in J_2$. For $i\in\{1,\,2\}$ we put
$$
g_i:=\Bigsum{\nu\in J_i}{}(R_{\nu+1}-R_n)S_tf \;\; \text{ and } \;\; p_{\nu}^i:=\begin{cases} \;(R_{\nu+2}-R_{\nu-1})S_tf & \text{for } n\in J_i\\ \;\;\;\;\;\;\;\;\;\;\;\;\;0 & \text{otherwise.} \end{cases}
$$
Then we obtain $S_tf-R_1S_tf=g_1+g_2$ by construction and the properties of class $\mathcal{W}$ yield $g_i=\sum_{\nu\in J_i}(R_{\nu+1}-R_{\nu})S_tf=\sum_{\nu\in J_i}(R_{\nu+1}-R_{\nu})(R_{\nu+2}-R_{\nu-1})S_tf=\sum_{\nu\in J_i}(R_{\nu+1}-R_\nu)p_{\nu}^i$. Since $(p_{\nu}^i)_{\nu\in\mathbb{N}}\subseteq\mathbb{P}$ with only finitely many $p_{\nu}^i\not=0$ we can apply (P2) and get $D(u_i)=:D_i\geqslant 1$ such that
\begin{align*}
\sup_{z\in\mathbb{D}}u_i(z)|g_i(z)|\;& =          \;\sup_{z\in\mathbb{D}}\big|\Bigsum{\nu=1}{\infty}[(R_{\nu+1}-R_{\nu})p_{\nu}^i](z)\big| \\
                                     & \leqslant  \;D_i \sup_{\nu\in\mathbb{N}}\big(u_i(r_{\nu}) \sup_{|z|=r_{\nu}}|p_{\nu}^i(z)|\big)\\
			             & \leqslant  \;D_i \sup_{\nu\in J_i}\big(u(r_{\nu})\sup_{|z|=r_{\nu}}|[(R_{\nu+2}-R_{\nu-1})S_tf](z)|\big)\\
			             & \leqslant  \;D_iC.
\end{align*}
This yields $g_1\in D_1C\varepsilon' B_{N,n}$ and $g_2\in D_2CS'B_{K,k}$. Thus, $S_tf=R_1S_tf+g_1+g_2\in \varepsilon'(2c^2+D_1)CB_{N,n}+S'(2c^2+D_2)B_{K,k}=\varepsilon B_{N,n}+SB_{K,k}$ for arbitrary $t$. Since $B_{N,n}$ and $B_{K,k}$ are both co-compact and $S_tf\rightarrow f$ holds w.r.t.~co we obtain $f\in \varepsilon B_{N,n}+SB_{K,k}$ and hence $\text{(}\overline{\text{P}_{\!2}}\text{)}$.
\end{proof}
\noindent{}We noted already in Section \ref{Preliminaries} that (Q) and $\text{(Q)}^{\sim}_{\text{in}}$ both imply $\text{(Q)}^{\sim}_{\text{out}}$. Thus, in \ref{Q-out-sim implies Projeins Null} it is possible to replace $\text{(Q)}^{\sim}_{\text{out}}$ by $\text{(Q)}^{\sim}_{\text{in}}$ or (Q). However, $\text{(Q)}^{\sim}_{\text{out}}$ a priori is a weaker (albeit less accessible) condition than (Q). Before we start with the preparations for our next result, let us precise the remarks we made previous to \ref{Q-out-sim implies Projeins Null}: In the proof of the forementioned result we used in the final step that the balls $B_{N,n}$ are co-compact. Unfortunately, for the balls $B_{N,n}^{\circ}$ this cannot be true: If $B_{N,n}^{\circ}$ is co-closed for all $N$ and $n$, then we get $B_{N,n}^{\circ}=\overline{B_{N,n}^{\circ}}^{\text{co}}=B_{N,n}$ where the last set is co-compact. The equality $B_{N,n}^{\circ}=B_{N,n}$ yields $Ha_{N,n}(G)=H(a_{N,n})_0(G)$ which implies (using $H(a_{N,n})_0(G)''=Ha_{N,n}(G)$, 
see \cite[1.5.(d)]{BBG}) that $H(a_{N,n})_0(G)$ is reflexive. By Bonet, Wolf \cite[Corollary 2]{BW2003} this implies that the space is finite dimensional -- but already in the balanced setting and in particular if $\mathcal{A}\subseteq W$ we have $\mathbb{P}\subseteq H(a_{N,n})_0(G)$. Thus, it is not possible to get a result for o-growth conditions by proceeding analogously to the proof of \ref{Q-out-sim implies Projeins Null}. However, utilizing results established in Bonet, Wegner \cite{BonetWegner2010} we can find a sufficient condition for $(AH)_0(\mathbb{D})$ being barrelled under the assumptions of class $\mathcal{W}$. Let us fix $N$, i.e.~we fix a step in the projective spectrum $\mathcal{A}_0H$. Now we consider the space of polynomials $\mathbb{P}$ endowed with two a priori different topologies: We write $\mathbb{P}$ for this space endowed with the topology induced by $(\mathcal{A}_N)_0H(\mathbb{D})$ and put $(\mathcal{A}_N)_0P(\mathbb{D}):=\ind{n}P(a_{N,n})_0(\mathbb{D})$ where $P(a_{N,n})_0=(\mathbb{P},\|\cdot\|_{N,n})$. With the techniques used in the proof of \cite[3.1]{BB2003} it is not hard to verify that the two topologies which we defined on $\mathbb{P}$ coincide, i.e.~$(\mathcal{A}_N)_0P(\mathbb{D})\subseteq (\mathcal{A}_N)_0H(\mathbb{D})\subseteq \mathcal{A}_NH(\mathbb{D})$ are topological subspaces for each $N$. Since $\mathcal{A}_NH(\mathbb{D})$ is regular, $(B_{N,n})_{n\in\mathbb{N}}$ is a fundamental system of bounded sets in the latter space, whence the balls $P_{N,n}^{\circ}=B_{N,n}\cap\mathbb{P}=B^{\circ}_{N,n}\cap\mathbb{P}$ form a fundamental system of bounded sets in $(\mathcal{A}_N)_0P(\mathbb{D})$.
\smallskip
\\Now we consider the projective spectrum $\mathcal{A}_0P=((\mathcal{A}_N)_0P(\mathbb{D}),\subseteq_{N+1}^N)_{N\in\mathbb{N}}$ with inclusions as linking maps. From \cite[2.2]{BonetWegner2010} (where general projective limits of regular inductive limits of normed spaces with inclusions as linking maps are studied) it follows that $(\mathcal{A}_N)_0P(\mathbb{D})$ is bornological, if it satisfies condition (B1), i.e.~for each $N$ there exists $M$ such that for each $m$ there exists $n$ such that $P^{\circ}_{M,m}\subseteq \Bigcap{k\in\mathbb{N}}(P^{\circ}_{N,n}\cap X + {\textstyle\frac{1}{k}}P^{\circ}_{N,n})$ and in addition for each absolutely convex set $T\subseteq X$ condition (B2) holds, that is there exists $N$ such that for each $n$ there exists $S>0$ such that $P^{\circ}_{N,n}\cap X\subseteq ST$. In order to check condition (B2) we have for technical reasons to assume that our set $W$ of class $\mathcal{W}$ is closed under finite maxima.

\begin{prop}\label{polynomials barrelled-lemma} Let $\mathcal{A}\subseteq W$ and assume that $W$ is closed under finite maxima. Let $\mathcal{A}$ satisfy condition (wQ). Then the space $(AP)_0(G):=\proj{N}\ind{n}P(a_{N,n})_0(\mathbb{D})$ is bornological.
\end{prop}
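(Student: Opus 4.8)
The plan is to apply the bornologicity criterion of Bonet, Wegner \cite[2.2]{BonetWegner2010} recalled above, i.e.~to verify conditions (B1) and (B2) for the spectrum $\mathcal{A}_0P$, where I write $X:=(AP)_0(\mathbb{D})=\proj{N}\ind{n}P(a_{N,n})_0(\mathbb{D})$. First I would make the underlying set explicit. Since each $a_{N,n}\in W$ vanishes at $\partial\mathbb{D}$ while every polynomial is bounded on $\mathbb{D}$, the product $a_{N,n}|p|$ vanishes at infinity for each $p\in\mathbb{P}$; hence $\mathbb{P}\subseteq P(a_{N,n})_0(\mathbb{D})$ for all $N,n$, so that $X=\mathbb{P}$ as a set and $P^{\circ}_{N,n}\cap X=P^{\circ}_{N,n}$.

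With this identification condition (B1) is immediate: because $P^{\circ}_{N,n}$ is absolutely convex, the right-hand side $\Bigcap{k\in\mathbb{N}}(P^{\circ}_{N,n}\cap X+\frac{1}{k}P^{\circ}_{N,n})$ equals $\Bigcap{k\in\mathbb{N}}\big(1+\frac{1}{k}\big)P^{\circ}_{N,n}$, and since $P^{\circ}_{N,n}$ is the closed unit ball of the norm $\|\cdot\|_{N,n}$ on $\mathbb{P}$ this intersection is just $P^{\circ}_{N,n}$. Choosing $M:=N$ and, for a given $m$, $n:=m$ then gives $P^{\circ}_{M,m}\subseteq P^{\circ}_{N,n}$ by the monotonicity of the weights, so (B1) holds trivially.

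The substance of the proof is condition (B2). Given an absolutely convex bornivorous set $T\subseteq X$, I would take the step $N$ demanded by (B2) to be the index $M$ produced by condition (wQ) applied to $N_0:=1$, which simultaneously fixes an index $n_1$. Fixing a second index $m$, for every $K\geqslant M$ condition (wQ) yields $k=k(K)$ and $S'=S'(K)$ with $\frac{1}{a_{M,m}}\leqslant\max(\frac{S'}{a_{1,n_1}},\frac{S'}{a_{K,k}})=\frac{S'}{\min(a_{1,n_1},a_{K,k})}$. As $W$ is closed under finite minima, $v_K:=\min(a_{1,n_1},a_{K,k(K)})\in W$, and the pointwise estimate turns into the genuine ball inclusion $P^{\circ}_{M,m}\subseteq S'(K)\,P^{\circ}_{v_K}$, where $P^{\circ}_{v_K}$ is the $o$-growth unit ball of $v_K$. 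The plan is then to combine these inclusions, as $K$ runs through a cofinal set of steps, into a single inclusion of $P^{\circ}_{M,m}$ into a scalar multiple of $T$. This is precisely where the closure of $W$ under finite maxima enters: it lets one realise the suprema of the finitely many weights occurring at each step again as weights in $W$, hence as genuine $o$-growth balls $P^{\circ}_{(\cdot)}$, which can be fed into the bornivorousness of $T$. Using in addition that the fixed weight $a_{1,n_1}$ vanishes at $\partial\mathbb{D}$, one arranges that $T$ absorbs the sets so produced and concludes, by absolute convexity of $T$, that $P^{\circ}_{M,m}\subseteq ST$ for a suitable $S>0$, which is (B2).

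The main obstacle is exactly this last passage from the family $P^{\circ}_{M,m}\subseteq S'(K)P^{\circ}_{v_K}$ to the single absorption $P^{\circ}_{M,m}\subseteq ST$. Unlike in the proof of \ref{Q-out-sim implies Projeins Null}, where the co-compactness of the balls $B_{N,n}$ allowed us to pass to the limit, the ball $P^{\circ}_{M,m}$ is here in general \emph{not} bounded in $X$ (its image in a high step $K$ need not be bounded), so $T$ cannot absorb it for the trivial reason; one must manufacture the honestly $X$-bounded sets that $T$ does absorb, and this is where both the closure of $W$ under finite maxima and the boundary decay of the weights are indispensable. One must also keep careful track of the interchange between the $\forall\,N\;\exists\,M$ quantification of (wQ) and the $\exists\,N$ demanded by (B2), the former being resolved by the choice $N:=M$. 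Once (B1) and (B2) are in place, \cite[2.2]{BonetWegner2010} yields that $X=(AP)_0(\mathbb{D})$ is bornological.
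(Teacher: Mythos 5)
Your framework coincides with the paper's: identify $X=\mathbb{P}$ algebraically, observe that (B1) is trivial, and reduce everything to (B2) and \cite[2.2]{BonetWegner2010}. But the proof of (B2) --- which is the entire content of the proposition --- is missing, and the plan you sketch for it cannot be completed as stated. You propose to cover $P^{\circ}_{M,m}$ by scalar multiples of \emph{$X$-bounded} sets which $T$ then absorbs by bornivorousness. This is impossible: a finite sum of bounded sets is bounded, so any inclusion $P^{\circ}_{M,m}\subseteq S(B_1+\cdots+B_j)$ with each $B_i$ bounded in $X$ would make $P^{\circ}_{M,m}$ itself bounded in $X$, which, as you note yourself, is false in general. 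The sets the paper feeds into $T$ are \emph{not} bounded: they are finite intersections $\bigcap_{\sigma=1}^{N}P^{\circ}_{\sigma,n(\sigma)}$ and $\bigcap_{\mu=1}^{K'}P^{\circ}_{\mu,k(\mu)}$, and the fact that a bornivorous $T$ absorbs some such finite intersection is precisely the nontrivial key lemma \cite[3.1]{BonetWegner2010}, applied twice inside a proof by contradiction. This also forces the (B2)-index to depend on $T$ (it is the $M$ which $\text{(wQ)}^{\star}$ yields for the $N$ coming out of \cite[3.1]{BonetWegner2010}); your choice $N:=M$ from (wQ) anchored at the first step is independent of $T$, and a $T$-independent index cannot work, since it would mean that every ball $P^{\circ}_{N,n}$ is absorbed by every absolutely convex bornivorous set, i.e.\ is bounded in $X$ --- again false in general. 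The boundary decay of $a_{1,n_1}$ has no bearing on what $T$ absorbs.

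The second missing ingredient is the decomposition. From (wQ) one only obtains pointwise estimates, hence inclusions into balls of \emph{minima} of weights, such as your $P^{\circ}_{M,m}\subseteq S'(K)\,P^{\circ}_{v_K}$; such a ball is exactly as intractable as $P^{\circ}_{M,m}$ itself, and nothing guarantees that $T$ absorbs it. The paper converts this kind of inclusion into the sum form $P^{\circ}_{M,m(M)}\subseteq\tau_K\big[\bigcap_{\sigma=1}^{N}P^{\circ}_{\sigma,n(\sigma)}+\bigcap_{\mu=1}^{K}P^{\circ}_{\mu,k(\mu)}\big]$ by running the class $\mathcal{W}$ decomposition from the proof of \ref{Q-out-sim implies Projeins Null}: one puts $u_1:=\max_{\sigma=1,\dots,N}a_{\sigma,n(\sigma)}/S_K'$ and $u_2:=\max_{\mu=1,\dots,K}a_{\mu,k(\mu)}/S_K'$ (this is where closure of $W$ under finite maxima is actually used), sets $u:=\min(u_1,u_2)$, and splits each polynomial $p$ with $u|p|\leqslant1$ as $p=R_1p+g_1+g_2$ via (P1) and (P2), with $u_1|g_1|$ and $u_2|g_2|$ controlled by constants. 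You never invoke the operators $R_n$ or (P1)/(P2); for holomorphic functions there is no generic way to split a function dominated by the maximum of two growth conditions into a sum of functions each dominated by one of them, and providing such a splitting is the sole purpose of the class $\mathcal{W}$ hypothesis. Relatedly, you work with plain (wQ) anchored at $N_0=1$, producing the single ball $P^{\circ}_{1,n_1}$ on the fixed side, whereas the paper must first pass to the Bierstedt--Bonet consequence $\text{(wQ)}^{\star}$ of (wQ) (see \cite{BB1994}), whose $\min_{\sigma=1,\dots,N}$-structure is what turns the fixed side into a finite intersection of balls --- the only kind of set that \cite[3.1]{BonetWegner2010} can handle. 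In short, the three tools that constitute the actual proof --- $\text{(wQ)}^{\star}$, the lemma \cite[3.1]{BonetWegner2010}, and the (P1)/(P2) decomposition --- are all absent from your proposal, and the hole you flag as ``the main obstacle'' is exactly where they are needed.
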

\begin{proof} By Bierstedt, Bonet \cite{BB1994}, condition (wQ) implies condition $\text{(wQ)}^{\star}$ that is
$$
\exists\: (n(\sigma))_{\sigma\in\mathbb{N}}\subseteq \mathbb{N} \text{ increasing } \forall\:N\;\exists\:M\;\forall\:K,\,m\:\exists\:S>0,\,k\colon 
$$
$$
{\textstyle\frac{1}{a_{M,m}}}\leqslant S\max\big({\textstyle\frac{1}{a_{K,k}}},\min_{\scriptscriptstyle\sigma=1,\dots,N}{\textstyle\frac{1}{a_{\sigma,n(\sigma)}}}\big).
$$
We fix an absolutely convex and bornivorous set $T$ in $(AP)_0(\mathbb{D})$.  Since $(AP)_0(\mathbb{D})=P(a_{N,n})_0(\mathbb{D})$ holds algebraically for all $N$, $n$ we may consider $T$ as a subset of the latter space and claim that there exists $N$ such that for each $n$ the ball $P^{\circ}_{N,n}$ is absorbed by $T$. We proceed by contradiction and hence assume that for each $M$ there exists $m(M)$ such that $P^{\circ}_{M,m(M)}$ is not absorbed by $T$. By \cite[3.1]{BonetWegner2010} there exists $N$ such that $\Bigcap{\sigma=1}^{N}P_{\sigma,m(\sigma)}^{\circ}$ is absorbed by $T$. For the sequence $(n(\sigma))_{\sigma\in\mathbb{N}}$ and this $N$ we choose $M$ as in $\text{(wQ)}^{\star}$. By our assumption there exists $m(M)$ such that for each $K$ there exist $S_K>0$ and $k(K)$ such that $\frac{1}{a_{M,m(M)}}\leqslant S_K\max({\textstyle\frac{1}{a_{K,k(K)}}},\min_{\sigma=1,\dots,N}\frac{1}{a_{\sigma,n(\sigma)}})$. We claim that the estimate ${\textstyle\frac{1}{a_{M,m(M)}}}\leqslant S_K'\max(u_K,w_N)$ holds 
for each $K$, where we use the abbreviations $w_N:=\min_{\sigma=1,\dots,N}\frac{1}{a_{\sigma,n(\sigma)}}$, $u_K:=\min_{\mu=1,\dots,K}\frac{1}{a_{\mu,k(\mu)}}$ and $S_K':=\max_{\mu=1,\dots,K}S_{\mu}$. To establish the claim let us fix $K$. Then we have $\frac{1}{S_K'a_{M,m(M)}}\leqslant \frac{1}{S_{\mu}a_{M,m(M)}}\leqslant \max(\frac{1}{a_{\mu,k(\mu)}},w_N)$ for $\mu=1,\dots,K$ by the very definition of $S_K'$ and the estimate we deduced from $\text{(wQ)}^{\star}$. If now $\frac{1}{S_K'a_{M,m(M)}}\leqslant w_N$ holds, we are done. Otherwise the above yields $\frac{1}{S_K'a_{M,m(M)}}\leqslant \frac{1}{a_{\mu,k(\mu)}}$ for $\mu=1,\dots,K$, i.e.~$\frac{1}{S_K'a_{M,m(M)}}\leqslant \min_{\mu=1,\dots,K}\frac{1}{a_{\mu,k(\mu)}}=u_K$ and we are done as well.
\smallskip
\\Now we again make use of the decomposition method based on class $\mathcal{W}$ to show that for each $K$ there exists $\tau_K>0$ such that the inclusion $P_{M,m(M)}^{\circ}\subseteq \tau_K[\Bigcap{\sigma=1}^N P_{\sigma,n(\sigma)}^{\circ}+\Bigcap{\mu=1}^KP_{\mu,k(\mu)}^{\circ}]$ is valid. Let $K$ be fixed and $p\in P_{M,m(M)}^{\circ}$ be given, i.e.~$|p|\leqslant\frac{1}{a_{M,m(M)}}$. We get the estimate
$
|p|\leqslant S_K'\max(u_K,w_N)=\max(\min_{\sigma=1,\dots,N}\frac{S_K'}{a_{\sigma,n(\sigma)}},\min_{\mu=1,\dots,K}\frac{S_K'}{a_{\mu,k(\mu)}}),
$
may define $\frac{1}{u_1}:=\min_{\sigma=1,\dots,N}\frac{S_K'}{a_{\sigma,n(\sigma)}}$, $\frac{1}{u_2}:=\min_{\mu=1,\dots,K}\frac{S_K'}{a_{\mu,k(\mu)}}$ and thus obtain
$u_1:=\max_{\sigma=1,\dots,N}\frac{a_{\sigma,n(\sigma)}}{S_K'}$, $u_2:=\max_{\mu=1,\dots,K}\frac{a_{\mu,k(\mu)}}{S_K'}\in W$ since $W$ is closed under the formation of finite maxima. We put $u:=\min(u_1,u_2)$. Since $W$ is closed under finite minima, $u\in W$ holds. Moreover, $\frac{1}{u}=\max(\frac{1}{u_1},\frac{1}{u_2})$ that is by the above $u|p|\leqslant 1$. 
\smallskip
\\Now we repeat the proof of \ref{Q-out-sim implies Projeins Null} in order to obtain $p=R_1p+g_1+g_2\in (2c^2+D_1)\linebreak{}CS_K'\Bigcap{\sigma=1}^NP_{\sigma,n(\sigma)}^{\circ} + (2c^2+D_2)CS_K'\Bigcap{\mu=1}^KP_{\mu,k(\mu)}^{\circ}$, i.e.~$p\in\tau_K\big[\Bigcap{\sigma=1}^NP_{\sigma,n(\sigma)}^{\circ}+\Bigcap{\mu=1}^KP_{\mu,k(\mu)}^{\circ}\big]$ with $\tau_K=CS_K'(2c^2+\max(D_1,D_2))$, which establishes the claim.
\smallskip
\\By \cite[3.1]{BonetWegner2010} there exists $K'$ such that $\Bigcap{\mu=1}^{K'} P^{\circ}_{\mu,k(\mu)}$ is absorbed by $T$, which in turn implies
$$
P_{M,m(M)}^{\circ}\subseteq \tau_{K'}\big[\Bigcap{\sigma=1}^N P_{\sigma,n(\sigma)}^{\circ}+\Bigcap{\mu=1}^{K'}P_{\mu,k(\mu)}^{\circ}\big],
$$
where the set on the left hand side is not absorbed by $T$ unlike the set on the right hand side, a contradiction. To finish the proof, we observe that our claim is exactly the statement (B2) in \cite[Section 2]{BonetWegner2010}, cf.~our remarks previous to \ref{polynomials barrelled-lemma}. Since statement (B1) of \cite{BonetWegner2010} is trivial in the case of $(AP)_0(\mathbb{D})$, \cite[2.2]{BonetWegner2010} finishes the proof.
\end{proof}

\begin{thm}\label{A_0P(G) bornological implies A_0H(G) barrelled} Let $\mathcal{A}\subseteq W$ and assume that $W$ is closed under finite maxima. If $\mathcal{A}$ satisfies condition (wQ), then $(AH)_0(\mathbb{D})$ is barrelled.
\end{thm}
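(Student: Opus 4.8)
The plan is to prove that $(AH)_0(\mathbb{D})$ is quasibarrelled and then to upgrade this to barrelledness exactly as in \ref{barrelled-barrelled}, i.e.~via Vogt \cite[3.1]{Vogt1989}, which is applicable because the spectrum $\mathcal{A}_0H$ is reduced (recall that $\mathcal{A}\subseteq W$ forces the balanced setting, in which $\mathcal{A}_0H$ is reduced, cf.~Section \ref{Necessary}). The whole weight of the argument therefore rests on the bornologicity of $(AP)_0(\mathbb{D})$ furnished by \ref{polynomials barrelled-lemma}, on the fact that $(AP)_0(\mathbb{D})$ sits as a dense topological subspace of $(AH)_0(\mathbb{D})$ (the topological subspace property was noted just before \ref{polynomials barrelled-lemma}), and on the approximation of an arbitrary $f\in(AH)_0(\mathbb{D})$ by the polynomials $S_jf$. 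Note that one cannot simply appeal to \ref{barrelled-barrelled} here, since $AH(\mathbb{D})$ is not available as a barrelled space under the mere assumption (wQ); this is precisely why the bornological space $(AP)_0(\mathbb{D})$ must be used as a replacement.

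To carry this out I would fix a bornivorous barrel $T$ in $(AH)_0(\mathbb{D})$ and first transport it to the subspace of polynomials. Since $(AP)_0(\mathbb{D})$ carries the subspace topology, the set $T\cap\mathbb{P}$ is absolutely convex and closed in $(AP)_0(\mathbb{D})$; it is absorbing because $T$ is, and it is bornivorous because every bounded subset of $(AP)_0(\mathbb{D})$ is bounded in $(AH)_0(\mathbb{D})$ and hence absorbed by $T$. Thus $T\cap\mathbb{P}$ is a bornivorous barrel in $(AP)_0(\mathbb{D})$, and since by \ref{polynomials barrelled-lemma} the latter space is bornological, hence quasibarrelled, the set $T\cap\mathbb{P}$ is a $0$-neighborhood in $(AP)_0(\mathbb{D})$. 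Using once more that $(AP)_0(\mathbb{D})$ carries the subspace topology, I then obtain a $0$-neighborhood $V$ in $(AH)_0(\mathbb{D})$ with $V\cap\mathbb{P}\subseteq T$.

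It remains to fill $V\cap\mathbb{P}$ up to a genuine $0$-neighborhood of $(AH)_0(\mathbb{D})$ contained in $T$, and here the Ces\`aro means enter. Restricting the equicontinuous net of \ref{S_j equicontinuous net} to the topological subspace $(AH)_0(\mathbb{D})\subseteq AH(\mathbb{D})$ shows that $(S_j)_{j\in\mathbb{N}}$ is an equicontinuous net in $L((AH)_0(\mathbb{D}),(AH)_0(\mathbb{D}))$ with $S_jf\in\mathbb{P}$ for all $j$ and $S_jf\to f$ in $(AH)_0(\mathbb{D})$ for each $f$. Equicontinuity yields a $0$-neighborhood $U$ in $(AH)_0(\mathbb{D})$ with $S_j(U)\subseteq V$ for all $j$; since $S_j(U)\subseteq\mathbb{P}$ this gives $S_j(U)\subseteq V\cap\mathbb{P}\subseteq T$, and the closedness of $T$ together with $S_jf\to f$ forces $U\subseteq T$. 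Hence $T$ is a $0$-neighborhood and $(AH)_0(\mathbb{D})$ is quasibarrelled, which completes the argument after the reduction of the first paragraph.

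This transfer step is morally \ref{Abstract Lemma}, but with "barrelled" weakened to the quasibarrelledness supplied by bornologicity of $(AP)_0(\mathbb{D})$, the test set being $T\cap\mathbb{P}$ rather than a preimage under the $S_j$. The points I would watch most carefully are exactly the two that make this weakening legitimate: that $T\cap\mathbb{P}$ genuinely remains \emph{bornivorous} in $(AP)_0(\mathbb{D})$ (so that bornologicity, and not full barrelledness, of $(AP)_0(\mathbb{D})$ suffices), and that the pulled-back neighborhood $V$ can be chosen inside $(AH)_0(\mathbb{D})$ so that the $\mathbb{P}$-valued operators $S_j$ actually land in $V\cap\mathbb{P}\subseteq T$. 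Everything else is a routine use of the subspace identifications and of \ref{S_j equicontinuous net}, so I do not expect a serious obstacle beyond bookkeeping.
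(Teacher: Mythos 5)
Your proof is correct and follows essentially the paper's own route: bornologicity of $(AP)_0(\mathbb{D})$ from \ref{polynomials barrelled-lemma}, transfer of quasibarrelledness to $(AH)_0(\mathbb{D})$ via the dense topological subspace of polynomials, and the upgrade from quasibarrelled to barrelled by Vogt \cite[3.1]{Vogt1989} using that $\mathcal{A}_0H$ is reduced (which, as you note, holds since $\mathcal{A}\subseteq W$ forces the balanced setting). The only difference is that you implement the transfer step, which the paper leaves to the reader, through the equicontinuous Ces\`aro net of \ref{S_j equicontinuous net} instead of the shorter pure density argument (for an open $0$-neighborhood $V$ one has $V\subseteq\overline{V\cap\mathbb{P}}\subseteq\overline{T}=T$); both implementations are valid, and your bookkeeping of why $T\cap\mathbb{P}$ is a bornivorous barrel in $(AP)_0(\mathbb{D})$ is sound.
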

\begin{proof} By \ref{polynomials barrelled-lemma}, the space $(AP)_0(\mathbb{D})$ is bornological. Moreover, $(AP)_0(\mathbb{D})\subseteq(AH)_0(\mathbb{D})$ is a topological subspace and this subspace is dense as we noted in Section \ref{Necessary}. From these facts it is easy to derive that $(AH)_0(\mathbb{D})$ is quasibarrelled. As we mentioned already in the proof of \ref{barrelled-barrelled}, for $(AH)_0(G)$, quasibarrelledness is equivalent to barrelledness by Vogt \cite[3.1]{Vogt1989}, since $\mathcal{A}_0H$ is reduced.
\end{proof}

\begin{rem}\label{schemes} The results of Sections \ref{Necessary} and \ref{Sufficient} are summarized in Figs.~1 and 2, which in particular illustrate again the lines of the proofs for the results and the assumptions needed for each single implication.
\begin{center}
\psfig{figure=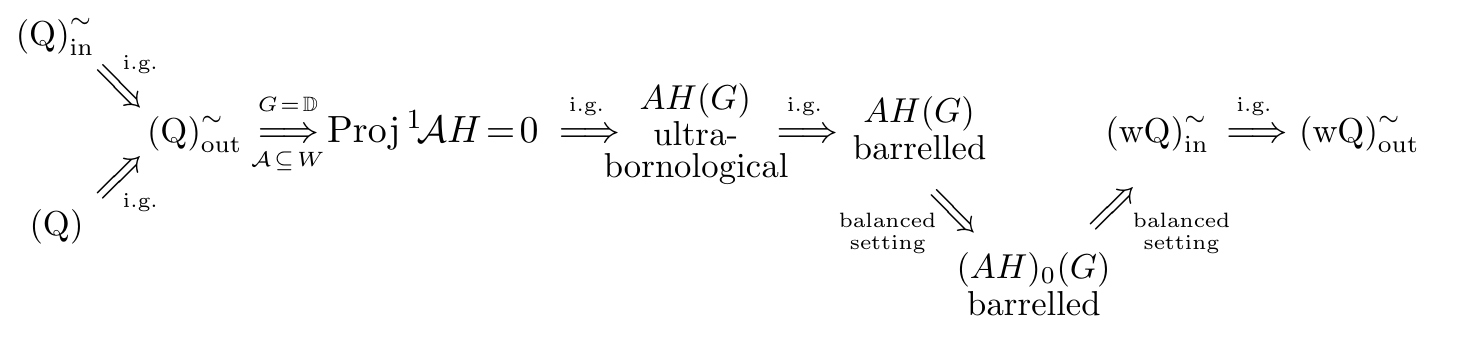,width=12.85cm}
{\small Figure 1: Scheme of implications for O-growth conditions.}
\bigskip
\\\psfig{figure=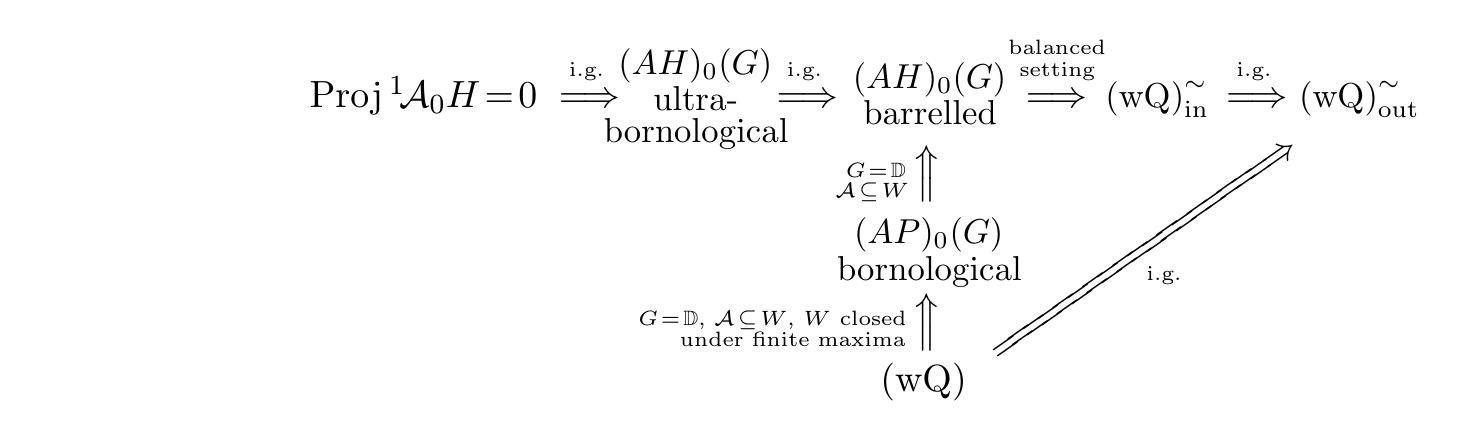,width=12.85cm}
{\small Figure 2: Scheme of implications for o-growth conditions.}
\end{center}
\end{rem}

%%%%%%%%%%%%%%%%%%%%%%%%%%%%%%%%%%%%%%%%%%%%%%%%%%%%%%%%%%%%%%%%%%%%%%%%%%%%%%%%%%%%%%%%%%%%%%%%%%%%%%%%%%%%%%%
%%                                                                                                           %%
%% Interchangeability of projective and inductive limit                                                      %%
%%                                                                                                           %%
%%%%%%%%%%%%%%%%%%%%%%%%%%%%%%%%%%%%%%%%%%%%%%%%%%%%%%%%%%%%%%%%%%%%%%%%%%%%%%%%%%%%%%%%%%%%%%%%%%%%%%%%%%%%%%%

\section{\hspace{-10pt}Interchangeability of projective and inductive limit}\label{Inter}

Given a sequence of weights $\mathcal{A}=((a_{N,n})_{n\in\mathbb{N}})_{N\in\mathbb{N}}$ on an open set $G\subseteq \mathbb{C}^d$ we can -- in addition to the PLB-spaces investigated in the preceeding sections -- also associate \textit{weighted LF-spaces of holomorphic functions} by defining
\begin{align*}
\mathcal{V}H(G)&:=\ind{n}\proj{N}Ha_{N,n}(G),\\
\mathcal{V}_0H(G)&:=\ind{n}\proj{N}H(a_{N,n})_0(G).
\end{align*}
These spaces constitute the holomorphic version of the weighted LF-spaces of continuous functions investigated by Bierstedt, Bonet \cite{BB1994} and have been studied by several authors in different contexts, see for instance Bierstedt, Meise \cite{BM2001} or Bierstedt, Bonet \cite{BB2003}. We refer to the survey article of Bierstedt \cite{Bierstedt2001} for detailed references. As in the case of continuous functions (cf.~\cite[p.~393f]{ABB2009}), it is clear that $\mathcal{V}H(G)\subseteq AH(G)$ and $\mathcal{V}_0H(G)\subseteq(AH)_0(G)$ holds with continuous inclusion and our aim is to investigate when the equality is valid. In order to do this we make use of the conditions (B) and $\text{(B)}^{\sim}$ which we introduced in Section \ref{Preliminaries} and of the results of the last two sections. We start with the investigation of the algebraic equalities $AH(G)=\mathcal{V}H(G)$ and $(AH)_0(G)=\mathcal{V}_0H(G)$, for which we need the following lemma.

\begin{lem}\label{B-tilde iff A=V alg-lemma} Let $G$ be balanced, $F\subseteq H(G)$ be a linear subspace which contains the polynomials and let $v$ and $w$ be two radial weights on $G$. If there exists $c>0$ such that $\sup_{z\in G} v(z)|f(z)|\leqslant c\,\sup_{z\in G} w(z)|f(z)|$ holds for each $f\in F$, then we have $\tilde{v}\leqslant c\tilde{w}$ on $G$.
\end{lem}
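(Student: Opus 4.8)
The plan is to unwind the definition of the associated weight, reduce the assertion to an inequality between the two associated growth conditions, and then transfer the hypothesis from $F$ to all of $H(G)$ by means of the Cesàro means of the Taylor series.

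First I would recall that $\tilde v = 1/(1/v)^{\sim}$ and $\tilde w = 1/(1/w)^{\sim}$, so that the claim $\tilde v \leqslant c\,\tilde w$ is equivalent to the reversed inequality $(1/w)^{\sim} \leqslant c\,(1/v)^{\sim}$ on $G$. I fix $z_0 \in G$. Since $(1/w)^{\sim}(z_0) = \sup\{\,|h(z_0)| : h\in H(G),\ \|h\|_w\leqslant 1\,\}$, it suffices to prove that $|h(z_0)| \leqslant c\,(1/v)^{\sim}(z_0)$ for every $h \in H(G)$ with $\|h\|_w \leqslant 1$.

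Given such an $h$, I would pass to its Cesàro means $S_j(h)$. These are polynomials, hence lie in $F$, and $S_j(h)\to h$ uniformly on the compact subsets of $G$, so in particular $S_j(h)(z_0)\to h(z_0)$. The decisive point, and the only place where radiality of the weights and balancedness of $G$ enter, is that the Cesàro means do not increase the weighted sup-norm, i.e.\ $\|S_j(h)\|_w \leqslant \|h\|_w \leqslant 1$ for every $j$. Indeed, for fixed $z\in G$ the circle $\{\,e^{i\phi}z : \phi\in[0,2\pi]\,\}$ lies in $G$, and the one-variable function $\phi \mapsto h(e^{i\phi}z) = \sum_{k\geqslant 0} p_k(z)e^{ik\phi}$ has no negative Fourier frequencies; therefore $S_j(h)(e^{i\phi}z)$ is precisely its $j$-th Fejér mean, and convolution with the nonnegative, mass-one Fejér kernel gives $|S_j(h)(z)| \leqslant \sup_{\phi}|h(e^{i\phi}z)|$. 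Multiplying by the radial $w$ and taking the supremum over $z\in G$ yields $\|S_j(h)\|_w \leqslant \|h\|_w$; this is exactly the averaging property underlying \cite[1.2]{BBG}, where it makes $(S_j)_j$ an equicontinuous net with bound one.

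Finally I would assemble the estimates. Because $S_j(h)\in F$, the hypothesis gives $\|S_j(h)\|_v \leqslant c\,\|S_j(h)\|_w \leqslant c$, so $S_j(h)/c$ lies in the unit ball $B_v=\{\,g\in H(G):\|g\|_v\leqslant 1\,\}$ and hence $|S_j(h)(z_0)| \leqslant c\,\sup_{g\in B_v}|g(z_0)| = c\,(1/v)^{\sim}(z_0)$. Letting $j\to\infty$ and using $S_j(h)(z_0)\to h(z_0)$ yields $|h(z_0)| \leqslant c\,(1/v)^{\sim}(z_0)$, and the supremum over all admissible $h$ gives $(1/w)^{\sim}(z_0) \leqslant c\,(1/v)^{\sim}(z_0)$. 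Since $z_0\in G$ was arbitrary, this is the desired inequality. The main obstacle is the contraction estimate $\|S_j(h)\|_w\leqslant\|h\|_w$ \emph{with constant exactly one}: any larger constant $C$ would only produce $\tilde v \leqslant cC\,\tilde w$, so it is essential to exploit the genuine averaging (Fejér) nature of the Cesàro means rather than mere boundedness of the operators $S_j$.
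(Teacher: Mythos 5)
Your proof is correct and takes essentially the same route as the paper's: both pass to the Ces\`{a}ro means $S_j(h)$, which are polynomials and hence lie in $F$, use the contraction property $\|S_j(h)\|_w\leqslant\|h\|_w$ (which the paper simply asserts from the balanced setting, and which you justify explicitly via the Fej\'{e}r kernel), apply the hypothesis to $S_j(h)$, and let $j\to\infty$. The only cosmetic difference is the order of operations --- you bound $|S_j(h)(z_0)|$ by $c\,(1/v)^{\sim}(z_0)$ before taking the limit, whereas the paper first takes the pointwise limit to get $|g|\leqslant c/v$ and then passes to the associated weight --- so the substance is identical.
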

\begin{proof} For $g\in H(G)$ with $|g|\leqslant \frac{1}{w}$ we have $|S_tg|\leqslant\frac{1}{w}$. Since $S_tg\in\mathbb{P}\subseteq F$, we may apply the estimate in the lemma to obtain $\frac{1}{c}\:\sup_{z\in G} v(z)|S_tg(z)|\leqslant\sup_{z\in G} w(z)|S_t(z)|\leqslant 1$, i.e.~$|S_tg|\leqslant \frac{c}{v}$. Because $S_tg\rightarrow g$ converges pointwise, we get $|g|\leqslant \frac{c}{v}$, hence $|\frac{g}{c}|\leqslant \frac{1}{v}$ and since $\frac{g}{c}\in H(G)$, $|\frac{g}{c}|\leqslant \frac{1}{\tilde{v}}$ holds. Finally we have
$$
{\textstyle\frac{1}{\tilde{w}(z)}}\:=\!\!\!\!\sup_{\stackrel{g\in H(G)}{\scriptscriptstyle |g|\leqslant\frac{1}{w}\text{ on } G}}|g(z)|\leqslant \!\!\sup_{\stackrel{g\in H(G)}{\scriptscriptstyle |g|\leqslant\frac{c}{\tilde{v}}\text{ on } G}}|g(z)|\leqslant {\textstyle\frac{c}{\tilde{v}(z)}}
$$
for arbitrary $z\in G$.
\end{proof}

\begin{prop}\label{O-algebraic-equality}Assume that we are in the balanced setting. Then the equality $AH(G)=\mathcal{V}H(G)$ holds algebraically if and only if $\mathcal{A}$ satisfies condition $\text{(B)}^{\sim}$.
\end{prop}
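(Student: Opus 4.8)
The inclusion $\mathcal{V}H(G)\subseteq AH(G)$ always holds, so only the reverse inclusion is at issue, and I would prove the two implications separately. The common thread in both directions is the following elementary but decisive fact: for a weight $a$ and $f\in H(G)$ one has $\|f\|_a=\|f\|_{\tilde a}$, i.e.~passing to the associated weight does not change the weighted sup-norm of a \emph{holomorphic} function. This follows at once from \cite[1.2.(i)]{BBT} (giving $\tilde a\geqslant a$, hence $\|f\|_{\tilde a}\geqslant\|f\|_a$) and \cite[1.2.(iii)]{BBT} (giving $\|f\|_{\tilde a}\leqslant\|f\|_a$), both already used above. It is precisely the bridge between the norm estimates behind condition $(B)$ and the weight estimates defining $\text{(B)}^{\sim}$. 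I will also use repeatedly that sup-norms commute with finite maxima, so that for $v_L:=\max_{N=1}^L a_{N,n(N)}$ one has $\|f\|_{v_L}=\max_{N=1}^L\|f\|_{N,n(N)}$ for $f\in H(G)$; note $v_L$ is again a radial weight.

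For the sufficiency of $\text{(B)}^{\sim}$, I would take $f\in AH(G)$, choose for each $N$ an index $n(N)$ with $C_N:=\|f\|_{N,n(N)}<\infty$, and feed the sequence $(n(N))_N$ into $\text{(B)}^{\sim}$ to obtain a single $m$. Fixing $M$ and the corresponding $L,c$, the chain
$$
\|f\|_{M,m}=\|f\|_{\tilde a_{M,m}}\leqslant c\,\|f\|_{\tilde v_L}=c\,\|f\|_{v_L}=c\max_{N=1}^L C_N<\infty
$$
(where the outer equalities are sup-norm invariance, $\tilde v_L=(\max_{N=1}^L a_{N,n(N)})^{\sim}$, and the inequality is the pointwise estimate of $\text{(B)}^{\sim}$) shows $f\in\Bigcap{M}Ha_{M,m}(G)=\proj{N}Ha_{N,m}(G)$, hence $f\in\mathcal{V}H(G)$. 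This gives $AH(G)\subseteq\mathcal{V}H(G)$ and thus equality.

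For the necessity I would reduce to Lemma \ref{B-tilde iff A=V alg-lemma}. Given a sequence $(n(N))_N$, consider the ``diagonal'' space $G_{(n(N))}:=\Bigcap{N}Ha_{N,n(N)}(G)$; since the Banach topologies are finer than co, this is a Fréchet space for the increasing norm system $(\|\cdot\|_{v_L})_L$, and it contains $\mathbb{P}$. By hypothesis $G_{(n(N))}\subseteq AH(G)=\mathcal{V}H(G)=\ind{n}F_n$ with $F_n:=\proj{N}Ha_{N,n}(G)$ Fréchet and $\mathcal{V}H(G)$ Hausdorff (co is a coarser Hausdorff topology). The inclusion $G_{(n(N))}\hookrightarrow\mathcal{V}H(G)$ has closed graph, because all topologies in sight are finer than co; as $G_{(n(N))}$ is Fréchet and $\mathcal{V}H(G)$ is webbed it is therefore continuous, and Grothendieck's factorization theorem (see \cite{MeiseVogtEnglisch}) yields an $m$ with $G_{(n(N))}\subseteq F_m$ and continuous inclusion into $F_m$. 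Reading off this continuity against the norm $\|\cdot\|_{M,m}$ of $F_m$ produces, for each $M$, indices $L$ and $c>0$ with $\|f\|_{M,m}\leqslant c\,\|f\|_{v_L}$ for all $f\in G_{(n(N))}$, in particular for all $f\in\mathbb{P}$. Since $a_{M,m}$ and $v_L$ are radial, Lemma \ref{B-tilde iff A=V alg-lemma} upgrades this to $\tilde a_{M,m}\leqslant c\,(\max_{N=1}^L a_{N,n(N)})^{\sim}$, which is exactly the estimate of $\text{(B)}^{\sim}$, with the quantifiers $(n(N))\mapsto m\mapsto(L,c)$ appearing in the prescribed order.

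The main obstacle I anticipate is the functional-analytic core of the necessity direction: upgrading the merely set-theoretic information $G_{(n(N))}\subseteq\mathcal{V}H(G)$ to containment in a \emph{single} step $F_m$. This is where one must check that $G_{(n(N))}$ is genuinely Fréchet for the chosen norm system and invoke the closed-graph/factorization machinery correctly; by contrast the passage back and forth between norm estimates and weight estimates (via sup-norm invariance and Lemma \ref{B-tilde iff A=V alg-lemma}) is routine once the fundamental property of associated weights is in place.
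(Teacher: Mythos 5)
Your proposal is correct and takes essentially the same route as the paper: your sufficiency argument is the paper's, merely recast through the norm identity $\|f\|_a=\|f\|_{\tilde a}$ (the paper phrases it pointwise and concludes via $H\tilde a_{M,m}(G)=Ha_{M,m}(G)$), and your necessity argument coincides with the paper's step for step — the diagonal Fr\'{e}chet space $\bigcap_{N}Ha_{N,n(N)}(G)$, the closed graph/de Wilde theorem into the webbed LF-space $\mathcal{V}H(G)$, Grothendieck's factorization theorem to land in a single step $HV_m(G)$, and Lemma \ref{B-tilde iff A=V alg-lemma} to upgrade the resulting norm estimate to the associated-weight estimate of $\text{(B)}^{\sim}$, with the quantifiers in the correct order.
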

\begin{proof}\textquotedblleft{}$\Rightarrow$\textquotedblright{}\: For a given sequence $(n(N))_{N\in\mathbb{N}}$ we consider $F:=\Bigcap{N\in\mathbb{N}}Ha_{N,n(N)}(G)$ endowed with the topology given by the system $(p_L)_{L\in\mathbb{N}}$ of seminorms $p_L(f)=\max_{N=1,\dots,L}\linebreak{}\sup_{z\in G} a_{N,n(N)}(z)|f(z)|$. Then $F$ is contained in $AH(G)$ with continuous inclusion and $AH(G)$ is complete and has a topology finer than co, whence $F$ is a Fr\'{e}chet space. The equality $AH(G)=\mathcal{V}H(G)$ implies that $F$ is contained in the LF-space $\mathcal{V}H(G)$. It is easy to see that the corresponding inclusion map has closed graph and with de Wilde's closed graph theorem (e.g.~\cite[24.31]{MeiseVogtEnglisch}) we get that it is even continuous. We apply Grothendieck's factorization theorem (e.g.~\cite[24.33]{MeiseVogtEnglisch}) to obtain $m$ such that $F\subseteq HV_m(G)$ holds with continuous inclusion. Hence, for given $M$ there exists $L$ and $c>0$ such that for each $f\in F$ the estimate $\sup_{z\in G}a_{M,m}(z)|f(z)|\leqslant{}c\max_{N=1,\dots,L} \sup_{z\in G} a_{N,n(N)}(z)|f(z)|\leqslant{}c\sup_{z\in G}\,(\max_{N=1,\dots,L} a_{N,n(N)})(z)|f(z)|$ holds. Since we are in the balanced setting we have $\mathbb{P}\subseteq F$ and we can apply \ref{B-tilde iff A=V alg-lemma} in order to obtain the estimate $\tilde{a}_{M,m}\leqslant c\,(\max_{N=1,\dots,L}a_{N,n(N)})^{\sim}$.
\smallskip
\\\textquotedblleft{}$\Leftarrow$\textquotedblright{}\: Let $f\in AH(G)$ be given. By definition, for each $N$ there exist $n(N)$ and $b_n>0$ such that $a_{N,n(N)}|f|\leqslant b_n$ holds on $G$. We select $m$ according to $\text{(B)}^{\sim}$ w.r.t.~the sequence $(n(N))_{N\in\mathbb{N}}$ and claim that $f\in Ha_{M,m}(G)$ holds for each $M$. Given $M$ we select $L$ and $c>0$ as in $\text{(B)}^{\sim}$ and put $b:=\max(b_1,\dots,b_L)$. Then we have $a_{N,n(N)}|\frac{f}{b}|\leqslant 1$ for $N=1,\dots,L$ and thus $\max_{N=1,\dots,L}a_{N,n(N)}|\frac{f}{b}|\leqslant1$ on $G$. We put $w_N:=\max_{N=1,\dots,L}a_{N,n(N)}$ to obtain $|\frac{f}{b}|\leqslant \frac{1}{w_N}$ and since $\frac{f}{b}\in H(G)$, we get $|\frac{f}{b}|\leqslant\frac{1}{\tilde{w}_N}$. $\text{(B)}^{\sim}$ implies $\frac{1}{\tilde{w}_N}\leqslant \frac{c}{\tilde{a}_{M,m}}$ and hence $|\frac{f}{b}|\leqslant\frac{c}{\tilde{a}_{M,m}}$ holds. This finally yields $\tilde{a}_{M,m}|f|\leqslant cb$, i.e.~$f\in H\tilde{a}_{M,m}(G)=Ha_{M,m}(G)$ and therefore we 
established the claim. But now $f\in HV_m(G)\subseteq \mathcal{V}H(G)$ holds and we are done.
\end{proof}
\noindent{}The proof of the next proposition is very similar to that of \ref{O-algebraic-equality}; therefore we omit the proof and refer to \cite{PHD} for details.  We need (as it is needed for the corresponding results on continuous functions, cf.~\cite[3.10]{ABB2009}) that the steps $(\mathcal{A}_N)_0H(G)$ of the PLB-space $(AH)_0(G)$ are complete. Unfortunately, there is (in contrast to the continuous case, cf.~\cite[Section 2]{ABB2009}) no characterization of completeness of the LB-space $\mathcal{V}_0H(G)$ for a decreasing sequence $\mathcal{V}$ of weights. However, under the assumption that $\mathcal{V}_0H(G)\subseteq \mathcal{V}_0C(G)$ is a topological subspace, $\mathcal{V}_0H(G)$ is complete if $\mathcal{V}$ is regularly decreasing, cf.~\cite[Corollary C]{Bierstedt2001}. In the setting of class $\mathcal{W}$ the latter is satisfied and hence in this case it is possible to replace the completeness assumption in \ref{o-algebraic-equality} (and also in \ref{Necessary conditions for equality}.(2)) by 
the (a priori stronger but more accessible) requirement that $\mathcal{A}_N=(a_{N,n})_{n\in\mathbb{N}}$ is regularly decreasing for each $N$.

\begin{prop}\label{o-algebraic-equality} Assume that we are in the balanced setting. If $\mathcal{A}$ satisfies condition $\text{(B)}^{\sim}$ then $(AH)_0(G)=\mathcal{V}_0H(G)$ holds algebraically. If all $(\mathcal{A}_N)_0H(G)$ are complete, the converse is also true.\hfill\qed\end{prop}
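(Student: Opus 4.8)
The plan is to run the two-step scheme of Proposition~\ref{O-algebraic-equality} almost verbatim, since the inclusion $\mathcal{V}_0H(G)\subseteq(AH)_0(G)$ is automatic and the content lies in the reverse set-theoretic inclusion. Two points distinguish the o-growth case from the O-growth one: in the sufficiency part one must upgrade boundedness to vanishing at infinity, and in the necessity part the defining inductive limits $(\mathcal{A}_N)_0H(G)$ are no longer automatically regular, which is exactly why completeness of the steps is assumed. Throughout I would use that in the balanced setting the O- and o-growth associated weights coincide, $\tilde{w}=\tilde{w}_0$ (\cite[1.13]{BBT}, \cite[1.5]{BBG}), whence $H(v)_0(G)=H(\tilde{v})_0(G)$ for every radial weight $v$, so that the associated-weight estimates may be carried out with the o-growth balls.

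For the implication $\text{(B)}^{\sim}\Rightarrow(AH)_0(G)=\mathcal{V}_0H(G)$ I would fix $f\in(AH)_0(G)$. Since each step is the set-theoretic union $(\mathcal{A}_N)_0H(G)=\Bigcup{n}H(a_{N,n})_0(G)$, for every $N$ there are $n(N)$ and $b_N>0$ with $a_{N,n(N)}|f|\leqslant b_N$ and $a_{N,n(N)}|f|$ vanishing at infinity. I then select $m$ from $\text{(B)}^{\sim}$ for the sequence $(n(N))_{N}$, fix $M$ with the associated $L,c>0$, and set $b:=\max(b_1,\dots,b_L)$ and $w_L:=\max_{N=1,\dots,L}a_{N,n(N)}$. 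The same computation as in \ref{O-algebraic-equality} gives $\tilde{a}_{M,m}|f|\leqslant cb$, hence $f\in Ha_{M,m}(G)$. The genuinely new step is vanishing at infinity: as a finite maximum of functions each of which, times $|f|$, vanishes at infinity, $w_L|f|$ vanishes at infinity, so $f/b\in H(w_L)_0(G)=H(\tilde{w}_L)_0(G)$; together with the estimate $\tilde{a}_{M,m}\leqslant c\,\tilde{w}_L$ furnished by $\text{(B)}^{\sim}$ this forces $\tilde{a}_{M,m}|f|$, and a fortiori $a_{M,m}|f|$, to vanish at infinity. Thus $f\in H(a_{M,m})_0(G)$ for every $M$, i.e.~$f\in\proj{N}H(a_{N,m})_0(G)\subseteq\mathcal{V}_0H(G)$.

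For the converse I would copy the Fr\'{e}chet-space argument of \ref{O-algebraic-equality}: given $(n(N))_{N}$, endow $F:=\Bigcap{N}H(a_{N,n(N)})_0(G)$ with the seminorms $p_L(f)=\max_{N=1,\dots,L}\|f\|_{N,n(N)}$, so that $F$ sits continuously in $(AH)_0(G)$ and contains $\mathbb{P}$. The algebraic equality places $F$ inside the LF-space $\mathcal{V}_0H(G)$; the inclusion has closed graph because both topologies are finer than co, so de Wilde's closed graph theorem makes it continuous and Grothendieck's factorization theorem pushes it into a single step $\proj{N}H(a_{N,m})_0(G)$. Reading off the resulting estimate $\|\cdot\|_{M,m}\leqslant c\max_{N=1,\dots,L}\|\cdot\|_{N,n(N)}$ on $F$ and feeding it, via $\mathbb{P}\subseteq F$, into \ref{B-tilde iff A=V alg-lemma} yields $\tilde{a}_{M,m}\leqslant c\,(\max_{N=1,\dots,L}a_{N,n(N)})^{\sim}$, which is $\text{(B)}^{\sim}$. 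The main obstacle, and the sole place the completeness hypothesis is used, is that this argument needs $(AH)_0(G)=\proj{N}(\mathcal{A}_N)_0H(G)$ to be complete, so that $F$ is a Fr\'{e}chet space and the factorization machinery applies; in the O-growth case this is automatic since $\mathcal{A}_NH(G)$ is a complete regular LB-space by \cite{BMS1982}, whereas the o-growth limits $(\mathcal{A}_N)_0H(G)$ need not even be regular, whence the completeness of the steps must be imposed exactly as in the continuous analogue \cite[3.10]{ABB2009}.
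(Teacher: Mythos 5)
Your overall strategy is exactly the one the paper intends: the paper omits its own proof, saying only that it is \textquotedblleft{}very similar\textquotedblright{} to that of \ref{O-algebraic-equality} with completeness of the steps entering in the converse, and both halves of your argument follow that template. In particular your converse direction (the Fr\'{e}chet space $F$, closed graph via co, de Wilde, Grothendieck factorization into a single step $\proj{N}H(a_{N,m})_0(G)$, then Lemma \ref{B-tilde iff A=V alg-lemma} applied through $\mathbb{P}\subseteq F$) is correct as written.

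The gap sits in the one step you yourself single out as genuinely new, the upgrade from boundedness to vanishing at infinity. You justify it by \textquotedblleft{}$\tilde{w}=\tilde{w}_0$, whence $H(v)_0(G)=H(\tilde{v})_0(G)$ for every radial weight $v$\textquotedblright{}. That \textquotedblleft{}whence\textquotedblright{} is a non sequitur: $\tilde{w}=\tilde{w}_0$ is an identity between two functions on $G$ (the sup over $B_v^{\circ}$ equals the sup over $B_v$); it says nothing about whether $\tilde{v}|f|$ vanishes at infinity whenever $v|f|$ does. The equality $H(v)_0(G)=H(\tilde{v})_0(G)$ is indeed true for weights of the balanced setting, but it needs its own proof (essentially that polynomials lie in $H(\tilde{v})_0(G)$ and are norm-dense in $H(v)_0(G)$), and neither \cite[1.13]{BBT} nor \cite[1.5]{BBG} states it. The cleanest repair avoids the claim altogether: from $\text{(B)}^{\sim}$ you have $a_{M,m}\leqslant\tilde{a}_{M,m}\leqslant c\,\tilde{w}_L$ pointwise, and since $\|g\|_{\tilde{w}_L}=\|g\|_{w_L}$ for every $g\in H(G)$ by the basic properties of associated weights (\cite[1.2]{BBT}), this yields $\|g\|_{M,m}\leqslant c\,\|g\|_{w_L}$ for all $g$. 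Now $w_L=\max_{N=1,\dots,L}a_{N,n(N)}$ is itself a weight of the balanced setting (radial, and $w_L|p|$ vanishes at infinity for each $p\in\mathbb{P}$, being a finite maximum), and you have already shown $f\in H(w_L)_0(G)$; hence $S_jf\rightarrow f$ in $\|\cdot\|_{w_L}$ by \cite[1.2.(e)]{BBG}, so also in $\|\cdot\|_{M,m}$. Each $S_jf$ is a polynomial, so $S_jf\in H(a_{M,m})_0(G)$ in the balanced setting, and $H(a_{M,m})_0(G)$ is closed in $Ha_{M,m}(G)$; therefore $f\in H(a_{M,m})_0(G)$, which is exactly what you need.

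A secondary remark on the converse: the completeness hypothesis is not consumed where you say it is. The space $F=\Bigcap{N}H(a_{N,n(N)})_0(G)$ with the seminorms $p_L$ is automatically Fr\'{e}chet, because each $H(a_{N,n(N)})_0(G)$ is a Banach space (closed in $Ha_{N,n(N)}(G)$), every norm dominates co, and co is Hausdorff, so Cauchy sequences glue; no completeness of $(AH)_0(G)$ or of its steps is needed for that. Your phrasing mirrors the paper's own wording in the O-growth proof, so it is consistent with the intended argument, but presenting completeness of the $(\mathcal{A}_N)_0H(G)$ as the ingredient that makes $F$ Fr\'{e}chet misattributes its role.
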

\noindent{}In the rest of this section we study the topological equalities $AH(G)=\mathcal{V}H(G)$ and $(AH)_0(G)=\mathcal{V}_0H(G)$ by using our results of Sections \ref{Necessary} and \ref{Sufficient}.

\begin{thm}\label{Necessary conditions for equality} Assume that we are in the balanced setting.\vspace{-2pt}
\begin{itemize} \item[(1)] If $AH(G)\hspace{-2.6pt}=\mathcal{V}H(G)$ holds algebraically and topologically then $\mathcal{A}$ satisfies the conditions $\text{(B)}^{\sim}$ and $\text{(wQ)}^{\sim}_{\text{in}}$.\vspace{-2pt}
\item[(2)] Assume that all $(\mathcal{A}_N)_0H(G)$ are complete. If  $(AH)_0(G)=\mathcal{V}_0H(G)$ holds algebraically and topologically then $\mathcal{A}$ satisfies the conditions $\text{(B)}^{\sim}$ and $\text{(wQ)}^{\sim}_{\text{in}}$.
\end{itemize}
\end{thm}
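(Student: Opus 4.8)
The plan is to treat the two required conditions separately: $\text{(B)}^{\sim}$ will come from the \emph{algebraic} equality, while $\text{(wQ)}^{\sim}_{\text{in}}$ will come from the \emph{topological} equality by way of barrelledness. Parts (1) and (2) then run in complete parallel, the only difference being which pair of earlier results one cites.

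First, for $\text{(B)}^{\sim}$ I would invoke the algebraic characterizations proved just above. In (1), the algebraic equality $AH(G)=\mathcal{V}H(G)$ gives $\text{(B)}^{\sim}$ immediately by the forward direction of Proposition \ref{O-algebraic-equality}. In (2), the algebraic equality $(AH)_0(G)=\mathcal{V}_0H(G)$, together with the assumed completeness of all steps $(\mathcal{A}_N)_0H(G)$, gives $\text{(B)}^{\sim}$ by the converse part of Proposition \ref{o-algebraic-equality}; the completeness hypothesis is exactly what makes that converse available, so it enters the argument precisely here.

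Second, for $\text{(wQ)}^{\sim}_{\text{in}}$ the decisive point is that the weighted LF-spaces are barrelled. Indeed $\mathcal{V}H(G)=\ind{n}\proj{N}Ha_{N,n}(G)$ is a countable inductive limit of the Fr\'{e}chet spaces $\proj{N}Ha_{N,n}(G)$, hence an LF-space, and any such space is barrelled, being a countable locally convex inductive limit of barrelled spaces; the same reasoning applies verbatim to $\mathcal{V}_0H(G)$. Since the equalities are assumed to hold \emph{topologically}, the spaces $AH(G)$ and $(AH)_0(G)$ inherit barrelledness. Then the implication (iii)$\Rightarrow$(iv) of Theorem \ref{strong necessary condition} yields $\text{(wQ)}^{\sim}_{\text{in}}$ in case (1), and the analogous implication in Theorem \ref{o-growth-necessary-conditions} yields it in case (2).

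I do not expect a genuine obstacle here: the statement is essentially an assembly of the algebraic equivalences (Propositions \ref{O-algebraic-equality} and \ref{o-algebraic-equality}) with the barrelledness-to-$\text{(wQ)}^{\sim}_{\text{in}}$ implications of Section \ref{Necessary}. The only step that needs care is to use the topological identification at exactly the right place, namely to transport barrelledness from the LF-space to the PLB-space, and to keep the algebraic and topological inputs separate so that each condition is derived from the correct hypothesis.
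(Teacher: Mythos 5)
Your proposal is correct and follows essentially the same route as the paper: the algebraic equality yields $\text{(B)}^{\sim}$ via Propositions \ref{O-algebraic-equality} and \ref{o-algebraic-equality} (with completeness entering exactly where you place it), and the topological identification with an LF-space transports a strong locally convex property to the PLB-space, after which Theorems \ref{strong necessary condition} and \ref{o-growth-necessary-conditions} give $\text{(wQ)}^{\sim}_{\text{in}}$. The only cosmetic difference is that the paper passes through ultrabornologicity of the LF-space while you use barrelledness; both are standard for countable inductive limits of Fr\'echet spaces and feed into the same cited implications.
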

\begin{proof}We show (1); the proof of (2) is very similar. Condition $\text{(B)}^{\sim}$ follows with \ref{O-algebraic-equality} from the algebraical equality. The topological equality implies that $AH(G)$ is ultrabornological as it is isomorphic to an LF-space. With \ref{strong necessary condition} it follows that $\mathcal{A}$ satisfies condition $\text{(wQ)}^{\sim}_{\text{in}}$.
\end{proof}

\begin{thm}\label{O-topological-equality}Let $\mathcal{A}\subseteq W$. If $\mathcal{A}$ satisfies the conditions $\text{(B)}^{\sim}$ and $\text{(Q)}^{\sim}_{\text{out}}$ then $AH(\mathbb{D})=\mathcal{V}H(\mathbb{D})$ holds algebraically and topologically.
\end{thm}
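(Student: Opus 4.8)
The plan is to separate the statement into its algebraic and its topological part and to feed each with one of the two hypotheses. First I would secure the algebraic equality. Since $\mathcal{A}\subseteq W$ automatically places us in the balanced setting, condition $\text{(B)}^{\sim}$ together with \ref{O-algebraic-equality} yields at once that the underlying vector spaces of $AH(\mathbb{D})$ and $\mathcal{V}H(\mathbb{D})$ coincide; combined with the always-valid continuous inclusion $\mathcal{V}H(\mathbb{D})\subseteq AH(\mathbb{D})$ this means that the canonical map $j\colon\mathcal{V}H(\mathbb{D})\rightarrow AH(\mathbb{D})$ is a continuous linear bijection.

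Next I would convert $\text{(Q)}^{\sim}_{\text{out}}$ into a structural property of the target space. By \ref{Q-out-sim implies Projeins Null}, condition $\text{(Q)}^{\sim}_{\text{out}}$ gives $\Projeins\mathcal{A}H=0$, and by Wengenroth \cite[3.3.4]{Wengenroth} (this is precisely the implication (i)$\Rightarrow$(ii) recorded in \ref{strong necessary condition}) the vanishing of the derived functor forces $AH(\mathbb{D})$ to be ultrabornological.

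It then remains to promote the algebraic equality to a topological one, which is where the actual work lies. The space $\mathcal{V}H(\mathbb{D})=\ind{n}\proj{N}Ha_{N,n}(\mathbb{D})$ is a countable inductive limit of Fr\'echet spaces, hence an LF-space and in particular webbed, while $AH(\mathbb{D})$ is ultrabornological by the previous step. I would consider the inverse $j^{-1}\colon AH(\mathbb{D})\rightarrow\mathcal{V}H(\mathbb{D})$ of the continuous bijection $j$ and check that its graph is closed: if $y_{\alpha}\rightarrow y$ in $AH(\mathbb{D})$ and $j^{-1}(y_{\alpha})\rightarrow z$ in $\mathcal{V}H(\mathbb{D})$, then continuity of $j$ gives $y_{\alpha}=j(j^{-1}(y_{\alpha}))\rightarrow j(z)$, whence $j(z)=y$ and $z=j^{-1}(y)$. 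Applying de Wilde's closed graph theorem (e.g.~\cite[24.31]{MeiseVogtEnglisch}) with the ultrabornological domain $AH(\mathbb{D})$ and the webbed codomain $\mathcal{V}H(\mathbb{D})$ then shows that $j^{-1}$ is continuous, so $j$ is a topological isomorphism.

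The point that must be handled with care — rather than a deep obstacle — is the bookkeeping of which space plays which role in de Wilde's machinery: the closed graph theorem demands the ultrabornological space as domain and the webbed space as codomain, which is exactly why I need the strong conclusion $\Projeins\mathcal{A}H=0$ (yielding ultrabornologicity, not merely the barrelledness that appears in the necessary-condition results) rather than a weaker locally convex property. Equivalently one could invoke de Wilde's open mapping theorem directly on $j$, whose domain $\mathcal{V}H(\mathbb{D})$ is webbed and whose range $AH(\mathbb{D})$ is ultrabornological, to conclude that $j$ is open; I would phrase the argument through the closed graph theorem, since that result is already in use in the proof of \ref{O-algebraic-equality}.
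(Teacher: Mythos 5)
Your proposal is correct and follows essentially the same route as the paper: algebraic equality from $\text{(B)}^{\sim}$ via \ref{O-algebraic-equality}, ultrabornologicity of $AH(\mathbb{D})$ from $\text{(Q)}^{\sim}_{\text{out}}$ via \ref{Q-out-sim implies Projeins Null}, and then de Wilde's machinery applied to the continuous bijection $\mathcal{V}H(\mathbb{D})\rightarrow AH(\mathbb{D})$. The paper simply invokes the open mapping theorem (\cite[24.30]{MeiseVogtEnglisch}) on this identity map rather than the closed graph theorem on its inverse, which is the equivalent formulation you yourself note at the end.
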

\begin{proof}By \ref{O-algebraic-equality} the identity $\mathcal{V}H(\mathbb{D})\rightarrow AH(\mathbb{D})$ is bijective and continuous. Since $AH(\mathbb{D})$ is ultrabornological by \ref{Q-out-sim implies Projeins Null} and $\mathcal{V}H(\mathbb{D})$ is webbed, we can apply the open mapping theorem (see e.g.~\cite[24.30]{MeiseVogtEnglisch}) and obtain that the identity is an isomorphism.
\end{proof}
\noindent{}As in earlier results, also in \ref{Necessary conditions for equality} it is valid to replace $\text{(B)}^{\sim}$ with (B) and $\text{(Q)}^{\sim}_{\text{out}}$ with (Q). To conclude this section, let us point out why we have no analog of the above result in the case of o-growth conditions: In fact, the proof of \ref{Necessary conditions for equality} relies on the ultrabornologicity of $AH(\mathbb{D})$ which is needed in order to apply the open mapping theorem and in the case of o-growth conditions we had not been able to find sufficient conditions for ultrabornologicity, cf.~our remarks after \ref{Q-out-sim implies Projeins Null}.

%%%%%%%%%%%%%%%%%%%%%%%%%%%%%%%%%%%%%%%%%%%%%%%%%%%%%%%%%%%%%%%%%%%%%%%%%%%%%%%%%%%%%%%%%%%%%%%%%%%%%%%%%%%%%%%
%%                                                                                                           %%
%% Special assumptions on the defining double sequence                                                       %%
%%                                                                                                           %%
%%%%%%%%%%%%%%%%%%%%%%%%%%%%%%%%%%%%%%%%%%%%%%%%%%%%%%%%%%%%%%%%%%%%%%%%%%%%%%%%%%%%%%%%%%%%%%%%%%%%%%%%%%%%%%%

\section{\hspace{-10pt}Special assumptions on the defining double sequence}\label{Special}

In our results on the vanishing of $\Projeins$, ultrabornologicity, barrelledness and also on the interchangeability of projective and inductive limit the necessary and the sufficient weight conditions are not the same. In view of the corresponding proofs it is not immediate how to improve our results in order to get characterizations of the forementioned properties. Therefore it is desireable to identify additional general assumptions on the sequence $\mathcal{A}$ which allow such characterizations.
\smallskip
\\Concerning barrelledness of $(AH)_0(G)$ a characterization can be achieved by the assumption that all weights in $\mathcal{A}$ are essential in the sense of Taskinen \cite{Taskinen2001}, i.e.~for each weight $a\in\mathcal{A}$ there exists $C>0$ such that $(1/a)^{\sim}\leqslant 1/a \leqslant C(1/a)^{\sim}$ holds. Under the latter assumption it is easy to see that for a double sequence $\mathcal{A}$ conditions (wQ), $\text{(wQ)}^{\sim}_{\text{in}}$ and $\text{(wQ)}^{\sim}_{\text{out}}$ as well as conditions (Q), $\text{(Q)}^{\sim}_{\text{in}}$ and $\text{(Q)}^{\sim}_{\text{out}}$ are equivalent, respectively. Therefore we get from \ref{strong necessary condition} and \ref{A_0P(G) bornological implies A_0H(G) barrelled} that for $\mathcal{A}\subseteq W$, $W$ being closed under finite maxima and all weights in $\mathcal{A}$ being essential, the space $(AH)_0(\mathbb{D})$ is barrelled if and only if (wQ) holds. 
\smallskip
\\However, also in the case of essential weights the conditions (Q) and (wQ) are a priori not equivalent and thus the assumption of being essential does not provide a characterization in the case of O-growth conditions. On the other hand, a characterization is possible if we assume that $\mathcal{A}$ satisfies the so-called condition $(\Sigma)$, which was introduced by Bierstedt, Bonet \cite[Section 5]{BB1994} for weighted LF-spaces of continuous functions. $(\Sigma)$ is a generalization of condition (V) of Bierstedt, Meise, Summers \cite{BMS1982} and it is the canonical extension of a condition for sequence spaces introduced by Vogt \cite[5.17]{Vogt1992}. We say that a double sequence $\mathcal{A}=((a_{N,n})_{n\in\mathbb{N}})_{N\in\mathbb{N}}$ on $G$ satisfies condition $(\Sigma)$ if
$$
\textstyle\forall\:N\;\exists\: K\geqslant N\:\forall\:k\;\exists\:n\geqslant k\colon \frac{a_{N,n}}{a_{K,k}} \text{ vanishes at } \infty \text{ on } G.
$$
Analogously to the previous sections, we define condition $(\Sigma)^{\sim}$ by replacing the quotient in $(\Sigma)$ by $\tilde{a}_{N,n}/\tilde{a}_{K,k}$.

\begin{prop}\label{SpectraEquivalent} Let $\mathcal{A}$ satisfy condition $(\mathit{\Sigma})$ or $(\mathit{\Sigma})^{\sim}$. Then the spectra $\mathcal{A}_0H$ and $\mathcal{A}H$ are equivalent in the sense of Wengenroth \cite[3.1.6]{Wengenroth}.
\end{prop}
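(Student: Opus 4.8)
The plan is to exhibit an explicit interlacing of the two spectra and to read off the equivalence from it. Concretely, I would isolate the single key inclusion: \emph{under $(\mathit{\Sigma})$ (resp.\ $(\mathit{\Sigma})^{\sim}$), for every $N$ there is $K\geqslant N$ such that $\mathcal{A}_KH(G)\subseteq(\mathcal{A}_N)_0H(G)$ holds with continuous inclusion.} Combined with the continuous inclusion $(\mathcal{A}_N)_0H(G)\subseteq\mathcal{A}_NH(G)$ that holds by construction in the balanced setting, this yields the chain
$$
\mathcal{A}_KH(G)\subseteq(\mathcal{A}_N)_0H(G)\subseteq\mathcal{A}_NH(G),
$$
in which all maps are inclusions. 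This is exactly the data needed to produce two morphisms between $\mathcal{A}_0H$ and $\mathcal{A}H$ that are, up to the connecting maps, mutually inverse, which is the definition of equivalence in Wengenroth \cite[3.1.6]{Wengenroth}.

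I would first establish the key inclusion assuming $(\mathit{\Sigma})$. Fix $N$ and choose $K\geqslant N$ as in $(\mathit{\Sigma})$. By the universal property of the inductive limit $\mathcal{A}_KH(G)=\ind{k}Ha_{K,k}(G)$ it suffices to include each step $Ha_{K,k}(G)$ continuously into $(\mathcal{A}_N)_0H(G)$. Given $k$, pick $n\geqslant k$ as in $(\mathit{\Sigma})$, so that $a_{N,n}/a_{K,k}$ vanishes at infinity. For $f\in Ha_{K,k}(G)$ we then estimate
$$
a_{N,n}|f|=\tfrac{a_{N,n}}{a_{K,k}}\,a_{K,k}|f|\leqslant\|f\|_{K,k}\,\tfrac{a_{N,n}}{a_{K,k}},
$$
and the right-hand side vanishes at infinity; hence $f\in H(a_{N,n})_0(G)$. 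Since $a_{N,n}/a_{K,k}$ is continuous and vanishes at infinity on $G$ it is bounded, so $\|f\|_{N,n}\leqslant(\sup_G a_{N,n}/a_{K,k})\,\|f\|_{K,k}$ and the inclusion $Ha_{K,k}(G)\hookrightarrow H(a_{N,n})_0(G)\hookrightarrow(\mathcal{A}_N)_0H(G)$ is continuous. The quantifier pattern of $(\mathit{\Sigma})$, namely $\exists\,K\;\forall\,k\;\exists\,n$, matches precisely what the inductive-limit inclusion requires.

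For $(\mathit{\Sigma})^{\sim}$ I would run the same computation with the associated weights, using the isometric identity $Ha(G)=H\tilde a(G)$ from \cite[1.2]{BBT} and the inequality $a\leqslant\tilde a$. Then $f\in Ha_{K,k}(G)$ gives $\tilde a_{K,k}|f|\leqslant\|f\|_{K,k}$, so $\tilde a_{N,n}|f|\leqslant\|f\|_{K,k}\,\tilde a_{N,n}/\tilde a_{K,k}$ vanishes at infinity by $(\mathit{\Sigma})^{\sim}$, whence $f\in H(\tilde a_{N,n})_0(G)$; and because $a_{N,n}\leqslant\tilde a_{N,n}$ we get $a_{N,n}|f|\leqslant\tilde a_{N,n}|f|\to0$ at infinity, i.e.\ $f\in H(a_{N,n})_0(G)$, again with a continuous inclusion by the same boundedness argument. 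Thus the key inclusion holds in both cases.

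Finally I would translate the interlacing into Wengenroth's definition \cite[3.1.6]{Wengenroth}. Writing $\iota_N\colon(\mathcal{A}_N)_0H(G)\hookrightarrow\mathcal{A}_NH(G)$ and $\jmath_N\colon\mathcal{A}_{K(N)}H(G)\hookrightarrow(\mathcal{A}_N)_0H(G)$ for the inclusions obtained above, both families consist of inclusions and therefore commute with the linking maps (which are themselves inclusions), so they are morphisms $\mathcal{A}_0H\to\mathcal{A}H$ and $\mathcal{A}H\to\mathcal{A}_0H$. Moreover $\iota_N\circ\jmath_N$ is the connecting inclusion $\mathcal{A}_{K(N)}H(G)\hookrightarrow\mathcal{A}_NH(G)$ of $\mathcal{A}H$ and $\jmath_N\circ\iota_{K(N)}$ is the connecting inclusion $(\mathcal{A}_{K(N)})_0H(G)\hookrightarrow(\mathcal{A}_N)_0H(G)$ of $\mathcal{A}_0H$, which is exactly the compatibility demanded by the definition. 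I expect the only genuinely delicate point to be the bookkeeping in this last step---verifying that the index shift $N\mapsto K(N)$ together with the step-wise continuous inclusions really fits the formal definition of equivalence---whereas the analytic heart of the matter, the key inclusion, reduces to the short weight computation above.
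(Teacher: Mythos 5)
Your proposal is correct and takes essentially the same route as the paper: the analytic core is the inclusion $\mathcal{A}_{K(N)}H(G)\subseteq(\mathcal{A}_N)_0H(G)$ obtained from $(\Sigma)$ resp.\ $(\Sigma)^{\sim}$ via the associated-weight estimates ($\tilde{a}_{K,k}|f|\leqslant\|f\|_{K,k}$, $a_{N,n}\leqslant\tilde{a}_{N,n}$, and the vanishing quotient), after which the equivalence of the interlaced spectra is read off exactly as you describe. The only differences are cosmetic: the paper writes out only the $(\Sigma)^{\sim}$ case (calling $(\Sigma)$ similar) and leaves the continuity of the inclusion maps implicit, which you verify explicitly via boundedness of the quotient.
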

\begin{proof}Assume that $(\Sigma)^{\sim}$ holds; the case of $(\Sigma)$ is similar. Let $N$ be arbitrary and select $K\geqslant N$ according to $(\Sigma)^{\sim}$. We claim that $\mathcal{A}_KH(G)\subseteq(\mathcal{A}_N)_0H(G)$ holds. In order to show this, let $f\in\mathcal{A}_KH(G)$. We select $k$ such that $f\in Ha_{K,k}(G)$. Then there exists $b_k>0$ such that $a_{K,k}|f|\leqslant b_k$, i.e.~$|f|\leqslant \frac{b_k}{a_{K,k}}$ on $G$. By \cite[1.2.(iii) and (vi)]{BBT} this implies $|f|\leqslant b_k\big(\frac{1}{a_{K,k}}\big)^{\sim}=b_k\tilde{w}_{K,k}$. We select $n\geqslant k$ according to $(\Sigma)^{\sim}$ and compute $\frac{1}{\tilde{w}_{N,n}}|f|\leqslant b_k\frac{\tilde{w}_{K,k}}{\tilde{w}_{N,n}}$. Since by $(\Sigma)^{\sim}$ the right hand side vanishes at $\infty$ on $G$, this has also to be true for $\frac{1}{\tilde{w}_{N,n}}|f|$. Finally, we have $\tilde{w}_{N,n}\leqslant w_{N,n}$ by \cite[1.2.(i)]{BBT}, i.e.~$a_{N,n}|f|=\frac{1}{w_{N,n}}|f|\leqslant\frac{1}{\tilde{w}_{N,n}}|f|$ and therefore $a_{N,
n}|f|$ has also to vanish at $\infty$ on $G$, i.e.~$f\in H(a_{N,n})_0(G)$ holds and thus $f\in(\mathcal{A}_N)_0H(G)$, which establishes the claim. The construction above gives rise to a function $K\colon\mathbb{N}\rightarrow\mathbb{N}$, $N\mapsto K(N)$ and we may assume w.l.o.g.~that $K$ is increasing. By iteration we obtain a sequence of inclusions $(i_{K^{N+1}(1),K^N(1)})_{N\in\mathbb{N}}$ which make the diagram (with an inclusion at every unlabeled arrow)
\begin{diagram}[height=2em,width=1.7em]
\cdots &\rTo &(\mathcal{A}_{K^2(1)})_0H(G) & \rTo &\cdots &\rTo                                               & (\mathcal{A}_{K(1)})_0H(G) & \rTo &\cdots &\rTo                                          &(\mathcal{A}_1)_0H(G) \\
       &     &\dTo                         &      &       &\ruTo(4,2)^{\!\!\!\!\!\!\!\!\!\!\!i_{K^2(1),K(1)}} & \dTo                       &      &       &\ruTo(4,2)^{\!\!\!\!\!\!\!\!\!\!\!i_{K(1),1}} &\dTo                  \\
\cdots &\rTo &\mathcal{A}_{K^2(1)}H(G)     & \rTo &\cdots &\rTo                                               & \mathcal{A}_{K(1)}H(G)     & \rTo &\cdots &\rTo                                          &\mathcal{A}_1H(G)     \\
\end{diagram}
commutative. Now it is only a matter of notation to see that the spectra are equivalent in the sense of \cite[3.1.6]{Wengenroth}.
\end{proof}
\noindent{}The above together with \cite[3.1.7]{Wengenroth} yields that under the assumption that $\mathcal{A}$ satisfies $(\Sigma)$ or $(\Sigma)^{\sim}$, the equality $AH(G)=(AH)_0(G)$ holds algebraically and thus also topologically. Moreover, $\Projeins\mathcal{A}H=\Projeins\mathcal{A}_0H$ holds. By Bierstedt, Bonet \cite[5.2]{BB1994}, (wQ) and (Q) are equivalent, if $\mathcal{A}$ satisfies $(\Sigma)$. An inspection of their proof shows that also $\text{(Q)}^{\sim}_{\text{in}}$ and $\text{(wQ)}^{\sim}_{\text{in}}$ are equivalent if $\mathcal{A}$ satisfies $(\Sigma)^{\sim}$.

\begin{thm}\label{TopologicalResultsSigma}Let $\mathcal{A}\subseteq W$ and $W$ be a set of class $\mathcal{W}$. If the sequence $\mathcal{A}$ satisfies $(\mathit{\Sigma})^{\sim}$, then (ii)--(viii) of the following statements are equivalent. If $\mathcal{A}$ satisfies $(\mathit{\Sigma})$ and the weights in $\mathcal{A}$ are essential, then all of the following statements are equivalent.\vspace{3pt}
\\\begin{tabular}{rlrl}\vspace{1pt}
(i)  & $\mathcal{A}$ satisfies (Q).                              &(vi)  & $(AH)_{(0)}(\mathbb{D})$ is barrelled. \\\vspace{1pt}
(ii) & $\mathcal{A}$ satisfies $\text{(Q)}^{\sim}_{\text{in}}$.  &(vii) & $\mathcal{A}$ satisfies $\text{(wQ)}^{\sim}_{\text{out}}$. \\\vspace{1pt}
(iii)& $\mathcal{A}$ satisfies $\text{(Q)}^{\sim}_{\text{out}}$. &(viii)& $\mathcal{A}$ satisfies $\text{(wQ)}^{\sim}_{\text{in}}$. \\\vspace{1pt}
(iv) & $\Projeins \mathcal{A}_{(0)}H=0$.                         &(ix)  & $\mathcal{A}$ satisfies (wQ). \\
(v)  & $(AH)_{(0)}(\mathbb{D})$ is ultrabornological.            &      &  \\
\end{tabular}
\end{thm}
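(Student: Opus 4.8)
The plan is to prove both assertions by assembling the implications that were already established in Sections 3--5, using condition $(\Sigma)^{\sim}$ (resp.~$(\Sigma)$) to collapse the o-growth and O-growth cases into one. The notation $(AH)_{(0)}$ and $\mathcal{A}_{(0)}H$ indicates that each statement is to be read simultaneously for $AH(\mathbb{D})$ and $(AH)_0(\mathbb{D})$; the key preliminary observation is that by Proposition \ref{SpectraEquivalent} and the remark following it, condition $(\Sigma)^{\sim}$ forces $AH(\mathbb{D})=(AH)_0(\mathbb{D})$ algebraically and topologically and $\Projeins\mathcal{A}H=\Projeins\mathcal{A}_0H$, so there is really only one space and one spectrum to treat. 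This is what makes the unified statement (iv)--(vi) meaningful.

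First I would dispose of the purely weight-theoretic equivalences. The equivalence of (vii) and (viii), i.e.~of $\text{(wQ)}^{\sim}_{\text{out}}$ and $\text{(wQ)}^{\sim}_{\text{in}}$, together with (ix)$\Leftrightarrow$(viii), follows from the remark immediately preceding the theorem: an inspection of the proof of Bierstedt, Bonet \cite[5.2]{BB1994} shows that $\text{(Q)}^{\sim}_{\text{in}}$ and $\text{(wQ)}^{\sim}_{\text{in}}$ coincide under $(\Sigma)^{\sim}$, and the same argument applied to the weak variants yields the coincidence of the four weak conditions. Likewise (ii)$\Leftrightarrow$(iii), i.e.~$\text{(Q)}^{\sim}_{\text{in}}\Leftrightarrow\text{(Q)}^{\sim}_{\text{out}}$, is handled the same way. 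Thus under $(\Sigma)^{\sim}$ alone all of (ii),(iii),(vii),(viii) become one condition, which I will treat as the single hinge of the argument.

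Next I would close the main cycle using the theorems of Sections 3--5. Theorem \ref{Q-out-sim implies Projeins Null} gives $\text{(Q)}^{\sim}_{\text{out}}\Rightarrow\Projeins\mathcal{A}H=0$, so (iii)$\Rightarrow$(iv). The general homological fact (Wengenroth \cite[3.3.4]{Wengenroth}, quoted in the proof of Theorem \ref{o-growth-necessary-conditions}) gives (iv)$\Rightarrow$(v), and trivially (v)$\Rightarrow$(vi) since ultrabornological spaces are barrelled. To return to the weight condition, Theorem \ref{strong necessary condition} (for $AH$) together with Theorem \ref{o-growth-necessary-conditions} (for $(AH)_0$) yields that barrelledness implies $\text{(wQ)}^{\sim}_{\text{in}}$, i.e.~(vi)$\Rightarrow$(viii). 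Since (viii),(ii),(iii) are all identified under $(\Sigma)^{\sim}$, this closes the loop (ii)$\Rightarrow$(iv)$\Rightarrow$(v)$\Rightarrow$(vi)$\Rightarrow$(viii)$\Rightarrow$(ii), establishing the equivalence of (ii)--(viii). For the barrelledness direction in the o-growth reading I would invoke Theorem \ref{A_0P(G) bornological implies A_0H(G) barrelled} as the concrete source of (ix)$\Rightarrow$(vi), which is why (ix) must be bundled with (viii).

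For the second, stronger assertion I would add the hypotheses that $\mathcal{A}$ satisfies $(\Sigma)$ and that all weights are essential, and show that these upgrade $(\Sigma)^{\sim}$ to $(\Sigma)$ and, via the discussion opening Section 6, force (Q) and (wQ) to coincide; combined with essentialness, which makes $\frac{1}{a}$ and $(\frac{1}{a})^{\sim}$ comparable and hence identifies every tilde-condition with its untilded counterpart, this pulls (i) and (ix) into the common equivalence class. Concretely, essentialness gives (i)$\Leftrightarrow$(ii) and (ix)$\Leftrightarrow$(viii) directly from the weight estimates, while $(\Sigma)$ supplies (i)$\Leftrightarrow$(ix) through Bierstedt, Bonet \cite[5.2]{BB1994}; appending these to the already-closed cycle yields the equivalence of all nine statements. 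The main obstacle I anticipate is bookkeeping rather than depth: one must verify carefully that the o-growth reading of each cited theorem is legitimate, in particular that the spectra-equivalence of Proposition \ref{SpectraEquivalent} genuinely transports $\Projeins$-vanishing, ultrabornologicity and barrelledness between $AH(\mathbb{D})$ and $(AH)_0(\mathbb{D})$, so that the single symbol $(AH)_{(0)}$ is justified and no implication silently uses a property available in only one of the two cases.
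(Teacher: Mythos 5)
Your overall strategy is the one the paper intends: Theorem \ref{TopologicalResultsSigma} carries no separate proof in the paper precisely because it is assembled from Theorems \ref{o-growth-necessary-conditions}, \ref{strong necessary condition} and \ref{Q-out-sim implies Projeins Null}, Proposition \ref{SpectraEquivalent}, and the remarks surrounding it. Your cycle (ii)$\Rightarrow$(iii)$\Rightarrow$(iv)$\Rightarrow$(v)$\Rightarrow$(vi)$\Rightarrow$(viii)$\Rightarrow$(ii), with $(\Sigma)^{\sim}$ collapsing the O- and o-growth readings so that the single symbol $(AH)_{(0)}(\mathbb{D})$ is legitimate, is exactly the intended assembly, and your treatment of the second assertion (essentialness identifies each tilde condition with its untilded counterpart; \cite[5.2]{BB1994} identifies (Q) with (wQ) under $(\Sigma)$) is correct.

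There is, however, one genuine error in your first part: you claim that (ix)$\Leftrightarrow$(viii), i.e.~(wQ)$\Leftrightarrow\text{(wQ)}^{\sim}_{\text{in}}$, already follows under $(\Sigma)^{\sim}$ alone, and you accordingly bundle (ix) into the first equivalence class. This cannot be justified: recovering an untilded condition from a tilde condition requires an estimate of the form $1/a\leqslant C(1/a)^{\sim}$, which is precisely essentialness --- and this is the whole reason the theorem keeps (i) and (ix) out of the first group and admits them only under the stronger hypotheses. The remark preceding the theorem (the inspection of \cite[5.2]{BB1994}) compares tilde conditions with tilde conditions; it says nothing about untilded (wQ). Relatedly, your proposed source for (ix)$\Rightarrow$(vi), namely Theorem \ref{A_0P(G) bornological implies A_0H(G) barrelled}, requires $W$ to be closed under finite maxima, a hypothesis Theorem \ref{TopologicalResultsSigma} does not make (it appears only in Theorem \ref{EqualitiesSigma}); it is also superfluous, since under $(\Sigma)^{\sim}$ the o-growth space coincides with the O-growth space, so the O-growth route through Theorem \ref{Q-out-sim implies Projeins Null} already serves both readings. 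Finally, your justification of (vii)$\Leftrightarrow$(viii) (``the same argument applied to the weak variants'') should be restricted to tilde conditions: what is needed, and what the inspection of \cite[5.2]{BB1994} plausibly yields, is $\text{(wQ)}^{\sim}_{\text{out}}\Rightarrow\text{(Q)}^{\sim}_{\text{out}}$ (or $\Rightarrow\text{(wQ)}^{\sim}_{\text{in}}$) under $(\Sigma)^{\sim}$, which then re-enters your cycle; extending the same claim to the untilded condition (wQ) is exactly where the argument breaks.
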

\noindent{}It is (in contrast to the case of (Q) and (wQ)) not clear if (B) and $\text{(B)}^{\sim}$ are equivalent under the assumption that all weights in $\mathcal{A}$ are essential. However, the latter is true if we assume that $\mathcal{A}$ is contained in some set of weights which is closed under finite maxima and consists of essential weights only. A combination of this assumption with that of class $\mathcal{W}$ is very natural (cf.~Section \ref{Examples}) and yields the following result.

\begin{thm}\label{EqualitiesSigma}Let $\mathcal{A}\subseteq W$ and $W$ be a set of class $\mathcal{W}$ which is closed under finite maxima. If the sequence $\mathcal{A}$ satisfies $(\mathit{\Sigma})^{\sim}$, then (ii)--(v) of the following statements are equivalent. If $\mathcal{A}$ satisfies $(\mathit{\Sigma})$ and the weights in $W$ are essential, then all of the following statements are equivalent.\vspace{3pt}
\\\begin{tabular}{rlrl}\vspace{1pt}
(i)  & $\mathcal{A}$ satisfies (Q) and (B).                                                  & (v)  & $\mathcal{A}$ satisfies $\text{(wQ)}^{\sim}_{\text{out}}$ and $\text{(B)}^{\sim}$.\\\vspace{1pt}
(ii) & $\mathcal{A}$ satisfies $\text{(Q)}^{\sim}_{\text{in}}$ and $\text{(B)}^{\sim}$.      & (vi) & $\mathcal{A}$ satisfies $\text{(wQ)}^{\sim}_{\text{in}}$ and $\text{(B)}^{\sim}$.\\\vspace{1pt}
(iii)& $\mathcal{A}$ satisfies $\text{(Q)}^{\sim}_{\text{out}}$ and $\text{(B)}^{\sim}$.     & (vii)& $\mathcal{A}$ satisfies (wQ) and (B).\vspace{-1.5pt}
\end{tabular}

\begin{tabular}{rl}
\hspace{1.5pt}(iv) & $(AH)_{(0)}(\mathbb{D})=\mathcal{V}H(\mathbb{D})$ holds algebraically and topologically.
\end{tabular}
\end{thm}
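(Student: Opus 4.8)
The plan is to treat Theorem \ref{EqualitiesSigma} as an exercise in collecting implications from Sections \ref{Necessary}--\ref{Special}, since the genuinely new analytic content already resides in the decomposition arguments behind \ref{Q-out-sim implies Projeins Null}, \ref{O-topological-equality} and \ref{Necessary conditions for equality}. I would handle the two regimes in turn, the guiding observation being that each of them forces all the relevant weight conditions to collapse onto a single equivalence class, to which the topological equality (iv) is then attached. Throughout I would use that under $(\Sigma)^{\sim}$ (hence also under $(\Sigma)$) the spectra $\mathcal{A}H$ and $\mathcal{A}_0H$ are equivalent, so that $AH(\mathbb{D})=(AH)_0(\mathbb{D})=(AH)_{(0)}(\mathbb{D})$ holds algebraically and topologically (the discussion following \ref{SpectraEquivalent}); this is exactly what lets me read the equality in (iv) as the O-growth equality $AH(\mathbb{D})=\mathcal{V}H(\mathbb{D})$.

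First, assuming $(\Sigma)^{\sim}$, I would invoke \ref{TopologicalResultsSigma}, whose hypotheses ($\mathcal{A}\subseteq W$ of class $\mathcal{W}$ and $(\Sigma)^{\sim}$) are met. Its statements (ii), (iii), (vii), (viii) show that the four conditions $\text{(Q)}^{\sim}_{\text{in}}$, $\text{(Q)}^{\sim}_{\text{out}}$, $\text{(wQ)}^{\sim}_{\text{out}}$, $\text{(wQ)}^{\sim}_{\text{in}}$ are mutually equivalent. Since (ii), (iii) and (v) of the present theorem each pair one of these with $\text{(B)}^{\sim}$, they are at once seen to be equivalent, and it remains only to insert (iv). For (iii)$\Rightarrow$(iv) I would apply \ref{O-topological-equality}, whose hypotheses $\text{(B)}^{\sim}$ and $\text{(Q)}^{\sim}_{\text{out}}$ are precisely (iii), obtaining $AH(\mathbb{D})=\mathcal{V}H(\mathbb{D})$, which under $(\Sigma)^{\sim}$ is (iv). For the converse I would apply \ref{Necessary conditions for equality}(1): the equality in (iv), read as the O-growth equality, yields $\text{(B)}^{\sim}$ together with $\text{(wQ)}^{\sim}_{\text{in}}$, hence (ii) by the collapse above. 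This closes the cycle and establishes the equivalence of (ii)--(v).

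For the second regime I would first note that if $\mathcal{A}$ satisfies $(\Sigma)$ and every weight of $W$ is essential, then $\tilde{a}_{N,n}/\tilde{a}_{K,k}$ and $a_{N,n}/a_{K,k}$ differ only by factors bounded above and below, so $(\Sigma)$ and $(\Sigma)^{\sim}$ are equivalent; thus the first part already delivers the equivalence of (ii)--(v). It then suffices to match the three remaining statements to this class by the condition-collapse facts recorded before \ref{SpectraEquivalent} and \ref{EqualitiesSigma}: for essential weights (Q), $\text{(Q)}^{\sim}_{\text{in}}$, $\text{(Q)}^{\sim}_{\text{out}}$ coincide and likewise (wQ), $\text{(wQ)}^{\sim}_{\text{in}}$, $\text{(wQ)}^{\sim}_{\text{out}}$ coincide, while the assumption that $W$ is closed under finite maxima and consists of essential weights yields (B)$\Leftrightarrow\text{(B)}^{\sim}$. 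Consequently (i) reduces to (ii), and (vi) and (vii) reduce to (v), so all of (i)--(vii) are equivalent.

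The proof is therefore almost purely combinatorial in its implications, and the one place demanding care is the bookkeeping around (iv). The delicate points I would watch are (a) justifying that the equality $(AH)_{(0)}(\mathbb{D})=\mathcal{V}H(\mathbb{D})$ may both be fed into \ref{Necessary conditions for equality}(1) and produced by \ref{O-topological-equality}, which speak only of $AH(\mathbb{D})$ and rest on the identification $AH(\mathbb{D})=(AH)_0(\mathbb{D})$ available solely because of $(\Sigma)^{\sim}$; and (b) confirming that the essential-weight hypothesis is strong enough to force (B)$\Leftrightarrow\text{(B)}^{\sim}$, for which the closure of $W$ under finite maxima is exactly the ingredient needed. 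No estimate beyond those already proved is required.
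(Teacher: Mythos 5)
Your proof is correct and follows essentially the route the paper intends: Theorem~\ref{EqualitiesSigma} is presented there as a direct consequence of \ref{TopologicalResultsSigma}, \ref{O-topological-equality}, \ref{Necessary conditions for equality}, the spectral equivalence \ref{SpectraEquivalent} (which identifies $(AH)_{(0)}(\mathbb{D})$ with $AH(\mathbb{D})$), and the collapse of the tilde/non-tilde conditions under essentialness together with closure under finite maxima for (B)$\Leftrightarrow\text{(B)}^{\sim}$ --- exactly the bookkeeping you carry out, including the two delicate points (a) and (b) you flag.
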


%%%%%%%%%%%%%%%%%%%%%%%%%%%%%%%%%%%%%%%%%%%%%%%%%%%%%%%%%%%%%%%%%%%%%%%%%%%%%%%%%%%%%%%%%%%%%%%%%%%%%%%%%%%%%%%
%%                                                                                                           %%
%% Examples                                                                                                  %%
%%                                                                                                           %%
%%%%%%%%%%%%%%%%%%%%%%%%%%%%%%%%%%%%%%%%%%%%%%%%%%%%%%%%%%%%%%%%%%%%%%%%%%%%%%%%%%%%%%%%%%%%%%%%%%%%%%%%%%%%%%%

\section{\hspace{-10pt}Examples}\label{Examples}

The main example for some system $W$ which is of class $\mathcal{W}$ is the set
\begin{align*}
\mathcal{W}(\varepsilon_0,k_0)\!:=\!\big\{&w\colon\mathbb{D}\rightarrow\mathbb{R}\:;\:w \text{ is continuous, strictly positive, radial, non-}\\
                                   &\text{increasing on $[0,1[$, $w(r)\stackrel{\scriptscriptstyle r\nearrow1}{\rightarrow}0$ holds and the two estimates}\\
                                   &\text{(L1) }\!\inf_{k\in\mathbb{N}}{\textstyle\frac{w(r_{k+1})}{w(r_k)}}\geqslant \varepsilon_0 \text{ and (L2) }\!\varlimsup_{k\rightarrow\infty}{\textstyle\frac{w(r_{k+k_0})}{w(r_k)}}<1\!-\!\varepsilon_0 \text{ are valid}\big\}
\end{align*}
where $\varepsilon_0>0$ and $k_0\in\mathbb{N}$ are constants. The above formulation is due to Bierstedt, Bonet \cite{BB2003} who showed that $\mathcal{W}(\varepsilon_0,k_0)$ is of class $\mathcal{W}$. The conditions (L1) and (L2) constitute a uniform version of conditions introduced by Lusky \cite{Lusky1992, Lusky1995}, which also appear in the sequence space representation for weighted LB-spaces studied by Mattila, Saksman, Taskinen \cite{MattilaSaksmanTaskinen}. For the proof of \ref{polynomials barrelled-lemma} we assumed that $W$ is closed under finite maxima. This is not included in the definition of class $\mathcal{W}$ given by Bierstedt, Bonet \cite{BB2003} but for the above example the latter is easy to verify. Moreover, all weights in $\mathcal{W}(\varepsilon_0,k_0)$ are essential, see Bierstedt, Bonet \cite{BB2003} in combination with Shields, Williams \cite[2.1.(iv)]{ShieldsWilliams1982}. A detailed proof is contained in Doma\'{n}ski, Lindstr\"{o}m \cite{DomanskiLindstroem}. Let us now discuss 
examples for the sequence $\mathcal{A}$; for detailed computations we refer to \cite{PHD}.

\begin{exa}\label{MST-Example-1} Based on an example of \cite[3.8]{MattilaSaksmanTaskinen} we put $a_{N,n}(z):=N(1-|z|)^{\alpha\frac{n}{n+1}}$ for some $\alpha>0$. Then $\mathcal{A}=((a_{N,n})_{n\in\mathbb{N}})_{N\in\mathbb{N}}\subseteq\mathcal{W}(\varepsilon_0,k_0)$ holds for suitable $\varepsilon_0$ and $k_0$, $\mathcal{A}$ satisfies $(\Sigma)$, (wQ) and (B). By \ref{TopologicalResultsSigma} and \ref{EqualitiesSigma}, we have $AH(\mathbb{D})=(AH)_0(\mathbb{D})=\mathcal{V}H(\mathbb{D})$ algebraically and topologically, these spaces are ultrabornological and barrelled and $\Projeins\mathcal{A}_0H=\Projeins\mathcal{A}H=0$ holds.
\end{exa}
\noindent{}\ref{MST-Example-1} is in some sense the easiest way to construct a double sequence using the example of \cite[3.8]{MattilaSaksmanTaskinen}: We multiplied each weight $v_n(z)=(1-|z|)^{\alpha\frac{n}{n+1}}$ of the sequence $\mathcal{V}=(v_n)_{n\in\mathbb{N}}$ (which was studied by Mattila, Saksman, Taskinen in the context of LB-spaces) with $N$. In order to produce more examples it is natural to generalize \ref{MST-Example-1} in the following sense: We consider sequences $\mathcal{A}=((a_{N,n})_{n\in\mathbb{N}})_{N\in\mathbb{N}}$ with $a_{N,n}=a_N\cdot{}v_n$ with an increasing sequence $(a_{N})_{N\in\mathbb{N}}$ and a decreasing sequence $(v_n)_{n\in\mathbb{N}}$.
\smallskip
\\A first idea concerning concrete examples for $a_N$ and $v_n$ might be to set $a_N(z):=(1-|z|)^{\varepsilon_N}$ and  $v_n(z):=(1-|z|)^{\delta_n}$, i.e.~$a_{N,n}(z)=(1-|z|)^{\varepsilon_N+\delta_n}$ for sequences $\varepsilon_N\searrow\varepsilon\in[0,\infty[$ and $\delta_n\nearrow\delta\in\;]0,\infty]$. Unfortunately, it turns out that for this selection of $\mathcal{A}$, $AH(\mathbb{D})$ never is a proper PLB-space, more precisely: $AH(\mathbb{D})$ is an LB-space if $\delta=\infty$ and it is a Fr\'{e}chet space otherwise (see \ref{Bonet-Proper}.(ii) and the second conditions in the proof of \ref{Bonet-Proper}.(ii)). Let us thus define $a_N(z):=a(|z|)^{\varepsilon_N}$ and $v_n(z):=v(|z|)^{\delta_n}$ for $(\varepsilon_N)_{N\in\mathbb{N}}$ and $(\delta_n)_{n\in\mathbb{N}}$ as above and (distinct) decreasing functions $a$, $v\colon [0,1[\:\rightarrow\:]0,1]$. The following result (which was communicated to the author by Bonet) establishes a sufficient condition on $a$ and $v$ assuring that $AH(\mathbb{D})$ is 
a proper PLB-space.

\begin{prop}\label{Bonet-Proper}(Bonet, 2010) Let $a$, $v\colon [0,1[\:\rightarrow\:]0,1]$ be continuous, decreasing with $\lim_{r\nearrow1}v(r)=\lim_{r\nearrow1}a(r)=0$. Let $(\varepsilon_N)_{N\in\mathbb{N}}$ and $(\delta_n)_{n\in\mathbb{N}}$ be sequences with $\varepsilon_N\searrow\varepsilon\in[0,\infty[$ and $\delta_n\nearrow\delta\in\;]0,\infty]$ such that $a^{\varepsilon_N}$ and $v^{\delta_n}$ are essential for all $N$ and $n$. Let $\mathcal{A}=((a_{N,n})_{n\in\mathbb{N}})_{N\in\mathbb{N}}$ be defined by $a_{N,n}(z)=a(|z|)^{\varepsilon_N}v(|z|)^{\delta_n}$ for $z\in\mathbb{D}$.
\begin{itemize}
\item[(i)] $AH(\mathbb{D})$ is a DF-space if and only if $\mathcal{A}$ satisfies condition (df), i.e.~there exists $P$ such that for all $N\geqslant P$ and all $n$ there exists $m>n$ and $C>0$ such that $a_{N,m}\leqslant Ca_{P,n}$ holds.
\item[(ii)] $AH(\mathbb{D})$ is metrizable if and only if $\mathcal{A}$ satisfies condition (m), i.e.~for every $N$ there exists $M>N$ and $n$ such that for all $m\geqslant n$ there exists $C>0$ such that $a_{N,n}\leqslant a_{M,m}$ holds.
\item[(iii)] If $\limsup_{r\nearrow1}\log a(r)/\log v(r)=\limsup_{r\nearrow1}\log v(r)/\log a(r)=\infty$\linebreak{}then $AH(\mathbb{D})$ is neither a DF-space nor matrizable and thus in particular not an LB- and also not a Fr\'{e}chet space.
\end{itemize}
\end{prop}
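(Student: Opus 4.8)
The plan is to reduce all three parts to combinatorial statements about $\mathcal{A}$, using throughout the dictionary that, for weights, $a_{N,m}\leqslant Ca_{P,n}$ on $\mathbb{D}$ is the same as the ball inclusion $B_{P,n}\subseteq CB_{N,m}$, i.e.\ the continuity of $Ha_{P,n}(\mathbb{D})\hookrightarrow Ha_{N,m}(\mathbb{D})$. Recall that $AH(\mathbb{D})=\proj{N}\mathcal{A}_NH(\mathbb{D})$ is the complete projective limit of the \emph{regular} LB-spaces $\mathcal{A}_NH(\mathbb{D})$, so its bounded sets are precisely those bounded in every step, and each step carries the fundamental sequence of bounded sets $(B_{N,n})_n$. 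The abstract equivalences behind (i) and (ii) I would quote from the general theory of such projective limits (\cite{BonetWegner2010}): namely that $AH(\mathbb{D})$ is a DF-space if and only if it admits a fundamental sequence of bounded sets, and that it is metrizable if and only if it is a Fr\'echet space, equivalently every step is reached from above by a single Banach space.

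For (i) I would check that a fundamental sequence of bounded sets exists exactly when (df) holds. If (df) holds with distinguished level $P$, each $B_{P,n}$ is bounded in every step: for $N\leqslant P$ this is automatic from $a_{N,n}\leqslant a_{P,n}$ (monotonicity in $N$), and for $N\geqslant P$ it is the inclusion $B_{P,n}\subseteq CB_{N,m}$ furnished by (df); by regularity of $\mathcal{A}_PH(\mathbb{D})$ every bounded set of $AH(\mathbb{D})$ lies in some $\lambda B_{P,n}$, so $(B_{P,n})_n$ is fundamental. The converse extracts such a distinguished level from a given fundamental sequence.

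For (ii) I would verify that ``$\mathcal{A}_MH(\mathbb{D})\hookrightarrow Ha_{N,n}(\mathbb{D})$ continuously'' is precisely (m): continuity of $\ind{m}Ha_{M,m}(\mathbb{D})\to Ha_{N,n}(\mathbb{D})$ means $a_{N,n}\leqslant C_m a_{M,m}$ for all $m$, which for $m\geqslant n$ is (m) and for $m<n$ follows from $a_{M,m}\geqslant a_{M,n}$. If (m) holds, the countable family $(\|\cdot\|_{N,n(N)})_N$ is continuous on $AH(\mathbb{D})$ and dominates the pullback of each step (every seminorm continuous on $\mathcal{A}_NH(\mathbb{D})$ is bounded by a multiple of $\|\cdot\|_{N,n(N)}$), hence generates the projective topology and $AH(\mathbb{D})$ is metrizable. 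For the converse I would apply Grothendieck's factorization theorem (\cite[24.33]{MeiseVogtEnglisch}) to the inclusion of the Baire space $AH(\mathbb{D})$ into $\mathcal{A}_NH(\mathbb{D})$ and translate the resulting factorization back into (m).

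Finally, for (iii) it suffices by (i) and (ii) to show that the two $\limsup$ hypotheses make (df) and (m) fail, since LB-spaces are DF-spaces and Fr\'echet spaces are metrizable. Here I use the explicit form $a_{N,n}=a(|z|)^{\varepsilon_N}v(|z|)^{\delta_n}$ and the strict monotonicity of $(\varepsilon_N)$ and $(\delta_n)$. To break (df): given any $P$ pick $N>P$ with $\alpha:=\varepsilon_P-\varepsilon_N>0$, fix $n$, and take any $m>n$ so $\beta:=\delta_m-\delta_n>0$; writing $r=|z|$,
$$
\log\frac{a_{N,m}(z)}{a_{P,n}(z)}=\alpha|\log a(r)|-\beta|\log v(r)|=|\log v(r)|\Big(\alpha\tfrac{|\log a(r)|}{|\log v(r)|}-\beta\Big),
$$
which tends to $+\infty$ along a sequence $r_j\nearrow1$ realizing $\log a(r_j)/\log v(r_j)\to\infty$, so no $C$ works. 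Symmetrically, to break (m): fix any $N$; for any $M>N$ put $\alpha':=\varepsilon_N-\varepsilon_M>0$, and for any $n$ take $m=n+1$ so $\beta':=\delta_m-\delta_n>0$, giving
$$
\log\frac{a_{N,n}(z)}{a_{M,m}(z)}=\beta'|\log v(r)|-\alpha'|\log a(r)|=|\log a(r)|\Big(\beta'\tfrac{|\log v(r)|}{|\log a(r)|}-\alpha'\Big),
$$
which tends to $+\infty$ along a sequence realizing $\log v/\log a\to\infty$. I expect the main obstacle to lie not in (iii) but in the converse directions of (i) and (ii), i.e.\ in extracting the purely combinatorial conditions (df) and (m) from the bare topological hypotheses, where regularity of the steps and Grothendieck factorization must be combined with the product structure of the weights.
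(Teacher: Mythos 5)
Your part (iii) is correct (it is exactly the computation the paper leaves as ``straightforward''), and the sufficiency halves of (i) and (ii) are workable in outline; but the necessity halves, which you yourself flag as the main obstacle and leave at the level of ``extract'' and ``translate back'', are precisely where the substance of the proposition lies, and the tools you name do not suffice. Two problems occur already in your set-up. First, your ``dictionary'' is only valid in one direction: $a_{N,m}\leqslant Ca_{P,n}$ does give $B_{P,n}\subseteq CB_{N,m}$, but the converse ball inclusion only yields $\tilde{a}_{N,m}\leqslant C\tilde{a}_{P,n}$ for the \emph{associated} weights; returning from there to $a_{N,m}\leqslant C'a_{P,n}$ requires the essentialness hypothesis of the proposition, which your proposal never invokes --- the paper uses it at the very end of both necessity proofs, and it is the reason this hypothesis is in the statement at all. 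Second, ``$AH(\mathbb{D})$ is a DF-space if and only if it admits a fundamental sequence of bounded sets'' is not a theorem (a DF-space must in addition be $\aleph_0$-quasibarrelled), and it is not the content of \cite{BonetWegner2010}, which concerns bornologicity via the conditions (B1) and (B2). For the sufficiency of (df) the paper avoids this issue by proving the stronger fact that $AH(\mathbb{D})$ coincides algebraically with $\mathcal{A}_PH(\mathbb{D})$ and then applying the open mapping theorem \cite[24.30]{MeiseVogtEnglisch} to the continuous bijection $AH(\mathbb{D})\rightarrow\mathcal{A}_PH(\mathbb{D})$, so that $AH(\mathbb{D})$ is in fact an LB-space; your boundedness computation already gives $\mathcal{A}_PH(\mathbb{D})\subseteq\mathcal{A}_NH(\mathbb{D})$ for every $N$, so this repair is available to you, but the route through the false abstract equivalence must be abandoned.

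In the necessity directions the gaps are more serious. In (i), the balls $B_{P,n}$ are not subsets of $AH(\mathbb{D})$, and $B_{P,n}\cap AH(\mathbb{D})$ need not be bounded in the steps with index $N>P$, so a fundamental sequence of bounded sets of $AH(\mathbb{D})$ cannot be played off against the $B_{P,n}$ directly; whether the bounded sets of the PLB-space are comparable with step balls is itself a nontrivial regularity problem. The paper argues dually: each $\mathcal{A}_N$ satisfies condition (S), hence the spectrum is reduced, so K\"othe's theorem gives $AH(\mathbb{D})'_b=\ind{N}\mathcal{A}_NH(\mathbb{D})'_b$ algebraically; DF-ness makes $AH(\mathbb{D})'_b$ a Fr\'echet space, Grothendieck's factorization theorem applied \emph{in the dual} produces $P$ with $\mathcal{A}_PH(\mathbb{D})'_b=\mathcal{A}_NH(\mathbb{D})'_b$ for $N\geqslant P$, barrelledness of the steps transfers this back to $\mathcal{A}_PH(\mathbb{D})=\mathcal{A}_NH(\mathbb{D})$, and only then a second factorization plus essentialness yields (df). In (ii), your Grothendieck step gives $AH(\mathbb{D})\subseteq Ha_{N,n(N)}(\mathbb{D})$ continuously, which is a statement about $AH(\mathbb{D})$ alone, whereas (m) is a statement about the steps, namely $\mathcal{A}_MH(\mathbb{D})\subseteq Ha_{N,n}(\mathbb{D})$ continuously; bridging the two forces you to control the continuous seminorms of the LB-steps, i.e.\ it is a projective description problem. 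This is where the paper spends most of its effort: it invokes (S) and \cite[1.6]{BMS1982} to describe the topology of $AH(\mathbb{D})$ by weights of the form $a^{\varepsilon_N}\bar{v}$, extracts from metrizability a countable cofinal family $w_k$, and then runs a contradiction argument with the Ces\`aro means (polynomials $g_k$ with $w_k|g_k|\leqslant1$ but $a_{N,k}|g_k|>k^2$ would give $g_k/k\rightarrow0$ in $AH(\mathbb{D})$ while being unbounded in every $Ha_{N,m}(\mathbb{D})$) to obtain $Hw_k(\mathbb{D})\subseteq Ha_{N,n}(\mathbb{D})$, closing with essentialness. None of this machinery appears in your plan. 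Finally, your parenthetical claim that every continuous seminorm on $\mathcal{A}_NH(\mathbb{D})$ is dominated by a multiple of $\|\cdot\|_{N,n(N)}$ cannot be right, since it would make that LB-space normable; what (m) actually provides is that the projection $AH(\mathbb{D})\rightarrow\mathcal{A}_NH(\mathbb{D})$ factors through the Banach space $Ha_{N,n}(\mathbb{D})$ via the step $M>N$, which is what your forward argument needs.
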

\begin{proof} Let us first note (as a preparation for the proofs of (i) and (ii)), that in our setting the sequence $\mathcal{A}_N:=(a_{N,n})_{n\in\mathbb{N}}$ satisfies condition (S) (or (V)), i.e.~for any $n$ there exists $m>n$ such that $a_{N,m}/a_{N,n}$ vanishes at infinity of $\mathbb{D}$, of Bierstedt, Meise, Summers \cite[p.~108]{BMS1982} for any $N$. \medskip
\\(i) \textquotedblleft{}$\Leftarrow$\textquotedblright{} We select $P$ as in (df) and claim that $\mathcal{A}H(\mathbb{D})=\mathcal{A}_PH(\mathbb{D})$ holds. Clearly, $AH(\mathbb{D})\subseteq\mathcal{A}_P(\mathbb{D})$ holds with continuous inclusion. In view of the open mapping theorem (e.g.~\cite[24.30]{MeiseVogtEnglisch}) it is enough to show that $\mathcal{A}_P(\mathbb{D})\subseteq AH(\mathbb{D})$ holds algebraically. Let $f\in\mathcal{A}_PH(\mathbb{D})$, i.e.~there exists $n$ such that $f\in Ha_{P,n}(\mathbb{D})$. For $N\geqslant{}P$ and $n$ as before we have ~$\sup_{z\in\mathbb{D}}a_{N,m}(z)|f(z)|\leqslant{}C\sup_{z\in\mathbb{D}}a_{P,n}(z)|f(z)|<\infty$ and thus $f\in Ha_{N,m}(\mathbb{D})\subseteq\mathcal{A}_NH(\mathbb{D})$ holds. For $N<P$ we have $\sup_{z\in\mathbb{D}}a_{N,n}(z)|f(z)|\leqslant\sup_{z\in\mathbb{D}}a_{P,n}(z)|f(z)|<\infty$ and thus $f\in Ha_{N,n}(\mathbb{D})\subseteq\mathcal{A}_NH(\mathbb{D})$. Since we thus have $\mathcal{A}_PH(\mathbb{D})\subseteq\mathcal{A}_NH(\mathbb{D})$ for all 
$N$ the conclusion follows.
\smallskip
\\\textquotedblleft{}$\Rightarrow$\textquotedblright{} Condition (S) implies that $\mathcal{A}_NH(\mathbb{D})=(\mathcal{A}_N)_0H(\mathbb{D})$ for any $N$, cf.~\cite[0.4]{BMS1982}. Moreover, each $\mathcal{A}_N$ satisfying (S) implies that $\mathcal{A}$ satisfies $\text{(}\Sigma\text{)}$. Since our assumptions guarantee that we are in the balanced setting (see Section \ref{Necessary}) we get from the above (and \ref{SpectraEquivalent}) that the inclusions $AH(\mathbb{D})\subseteq\mathcal{A}_NH(\mathbb{D})\subseteq\mathcal{A}_MH(\mathbb{D})$ both hold with dense image whenever $N\geqslant M$ is satisfied. Thus, in the inductive spectrum $(\mathcal{A}_NH(\mathbb{D})'_b)_{N\in\mathbb{N}}$ of Fr\'{e}chet spaces the natural linking maps are injective. Moreover, each step may be regarded as a subspace of $AH(\mathbb{D})'_b$ with continuous inclusion map. Since by the above $AH(\mathbb{D})$ is a reduced projective limit, we get from K\"othe \cite[statement (6) on p.~290]{KoetheI} that $AH(\mathbb{D})'_b=\ind{N} \mathcal{A}_NH(\mathbb{D})'_b$ holds algebraically. By our assumptions, $AH(\mathbb{D})$ is a DF-space and thus its strong dual is a Fr\'{e}chet space (e.g.~\cite[25.9]{MeiseVogtEnglisch}).
\smallskip
\\The facts collected so far can be formulated as follows: Consider the identity map $AH(\mathbb{D})'_b\rightarrow\Bigcup{N\in\mathbb{N}}\mathcal{A}_NH(\mathbb{D})'_b$ where the space on the right hand side is endowed with the topology of $AH(\mathbb{D})'_b$. Then this map is continuous, every inclusion of $\mathcal{A}_NH(\mathbb{D})'_b$ into the space on the right hand side is continuous and the space on the left hand side is a Fr\'{e}chet space. Thus, we may apply Grothendieck's factorization theorem (e.g.~\cite[22.33]{MeiseVogtEnglisch}) to conclude that there exists $P$ such that $AH(\mathbb{D})'_b\subseteq\mathcal{A}_PH(\mathbb{D})'_b$ holds with continuous inclusion. Thus, for any $N\geqslant P$ we obtain $\mathcal{A}_PH(\mathbb{D})'_b=\mathcal{A}_NH(\mathbb{D})'_b$ algebraically and topologically.
\smallskip
\\Let us now show that the above implies already that $\mathcal{A}_PH(\mathbb{D})=\mathcal{A}_NH(\mathbb{D})$ holds algebraically and topologically. The latter spaces are both complete (see Section \ref{Preliminaries}) and we know already that $\mathcal{A}_NH(\mathbb{D})$ is dense in $\mathcal{A}_PH(\mathbb{D})$. Therefore it is enough to show that $\mathcal{A}_NH(\mathbb{D})\subseteq\mathcal{A}_PH(\mathbb{D})$ is a topological subspace. The topology of $\mathcal{A}_NH(\mathbb{D})$ is given by the system of seminorms $(p_M)_{M\in\mathcal{M}}$ with $p_M(x)=\sup_{y\in M}|y(x)|$ for $x\in E$ and $\mathcal{M}$ being the system of equicontinuous subsets of $\mathcal{A}_NH(\mathbb{D})'_b$. Since $\mathcal{A}_NH(\mathbb{D})$ is barrelled, we get the same topology if we replace $\mathcal{M}$ by the system of bounded subsets of $\mathcal{A}_NH(\mathbb{D})'_b$. Since the same arguments apply to $\mathcal{A}_PH(\mathbb{D})$, the coincidence of the strong duals yields that the seminorms which generate the topology of $\mathcal{A}_NH(\mathbb{D})$ are just the restrictions of those generating the topology of $\mathcal{A}_PH(\mathbb{D})$, i.e.~$\mathcal{A}_NH(\mathbb{D})$ is a topological subspace of $\mathcal{A}_PH(\mathbb{D})$.
\smallskip
\\In order to show that (df) holds we select $P$ as above. For $N\geqslant P$ and arbitrary $n$ we then have $Ha_{P,n}(\mathbb{D})\subseteq\mathcal{A}_NH(\mathbb{D})$ with continuous inclusion. Therefore, we may apply Grothendieck's factorization theorem a second time to conclude that there exists $m$ such that $Ha_{P,n}(\mathbb{D})\subseteq Ha_{N,m}(\mathbb{D})$ holds with continuous inclusion. Thus, there exists $C>0$ such that $B_{P,n}\subseteq C B_{N,m}$ holds. By the definition of associated weights, this inclusion yields $\tilde{a}_{N,m}\leqslant C\tilde{a}_{P,n}$. Due to our assumptions concerning essentialness and in view of \cite[Remarks previous to 1.5]{BBT} it follows $a_{N,m}\leqslant C a_{P,n}$.
\medskip
\\(ii) \textquotedblleft{}$\Leftarrow$\textquotedblright{} We show that there exist increasing sequences $(N(j))_{j\in\mathbb{N}}$ and $(n(j))_{j\in\mathbb{N}}$ such that $AH(\mathbb{D})=F$ holds where $F=\Bigcap{j\in\mathbb{N}}Ha_{N(j),n(j)}(\mathbb{D})$ is endowed with the topology of the seminorms $(p_L)_{L\in\mathbb{N}}$ where $p_L(f)=\max_{j=1,\dots,L}\sup_{z\in\mathbb{D}}a_{N(j),n(j)}(z)|f(z)|$.
\smallskip
\\We define the forementioned sequences iteratively: We put $N(1):=1$. For $N=1$ we select $M$ and $n$ as in (m) and define $N(2):=N$ and $n(1):=n$. By (m) we have $Ha_{N(2),m}(\mathbb{D})\subseteq Ha_{N(1),n(1)}(\mathbb{D})$ with continuous inclusion for all $m$. Thus, $\mathcal{A}_{N(2)}H(\mathbb{D})\subseteq Ha_{N(1),n(1)}(\mathbb{D})$ holds with continuous inclusion. Given $N(2)$ as above we select $N(3)>N(2)$ arbitrary and (for $N=N(3)$) we select $n(2):=n$ according to (m). Then again $Ha_{N(3),m}(\mathbb{D})\subseteq Ha_{N(2),n(2)}(\mathbb{D})$ and therefore $\mathcal{A}_{N(3)}H(\mathbb{D})\subseteq Ha_{N(2),n(2)}(\mathbb{D})$ holds with continuous inclusion. Proceeding in this way we obtain $(N(j))_{j\in\mathbb{N}}$, $(n(j))_{j\in\mathbb{N}}$ such that $\mathcal{A}_{N(j+1)}H(\mathbb{D})\subseteq Ha_{N(j),n(j)}(\mathbb{D})$ holds with continuous inclusion for any $j$.
\smallskip
\\$F\subseteq AH(\mathbb{D})$ holds with continuous inclusion and from the last paragraph we get that also $AH(\mathbb{D})\subseteq\proj{j}\mathcal{A}_{N(j+1)}H(\mathbb{D})\subseteq F$ with continuous inclusion.
\smallskip
\\\textquotedblleft{}$\Rightarrow$\textquotedblright{} By (S) and \cite[1.6]{BMS1982} projective description (see the survey \cite{Bierstedt2001} of Bier\-stedt for detailed information on this notion) holds for every step of $AH(\mathbb{D})$, i.e.~the topology of $\mathcal{A}_NH(\mathbb{D})$ is given by the system of seminorms $(p_{\bar{a}})_{\bar{a}\in\bar{A}_N}$ with $p_{\bar{a}}(f)=\sup_{z\in\mathbb{D}}\bar{a}(z)|f(z)|$ for $f\in\mathcal{A}_NH(\mathbb{D})$ and
$$
\bar{A}_N=\{\bar{a}\colon \mathbb{D}\rightarrow\mathbb{R}\:;\:\exists\:\bar{v}\in\bar{V}\colon \bar{a}=a^{\varepsilon_N}\bar{v}\},
$$
$$
\bar{V}=\{\bar{v}\colon\mathbb{D}\rightarrow\:]0,1[\;;\:\forall\:n\;\exists\:\lambda_n>0\colon\bar{v}\leqslant\lambda_nv^{\delta_n}\text{ on }\mathbb{D}\}
$$
where we may assume that all members of $\bar{V}$ are radial and continuous, see \cite{BMS1982}. Therefore, a fundamental system of seminorms of $AH(\mathbb{D})$ is given by $(p_{w})_{w\in W}$ with $p_w(f)=\sup_{z\in\mathbb{D}}w(z)|f(z)|$ for $f\in AH(\mathbb{D})$ and
$$
W=\{w\colon\mathbb{D}\rightarrow\mathbb{R}\:;\:\exists\:N,\,\bar{v}\in\bar{V}\colon w=a^{\varepsilon_N}\bar{v}\}.
$$
If $AH(\mathbb{D})$ is metrizable we can select an increasing sequence $(w_k)_{k\in\mathbb{N}}$, $w_k=a^{\varepsilon_{M(k)}}\bar{v}_k$, $M(k+1)>M(k)$, $\bar{v}_k\in\bar{V}$ such that for every continuous seminorm $p$ on $AH(\mathbb{D})$ there exists $k=k(p)$ such that $p(f)\leqslant\sup_{z\in\mathbb{D}}w_k(z)|f(z)|$ holds for all $f\in AH(\mathbb{D})$. We consider now the sequence $(k(p))_{p\in\text{cs}(AH(\mathbb{D}))}$. For every $k$ and for every $n$ there is $\lambda_{n}^{(k)}>0$ such that $\bar{v}_k\leqslant\lambda_n^{(k)}v^{\delta_n}$ and thus $w_k=a^{\varepsilon_{M(k)}}\bar{v}_k\leqslant\lambda_n^{(k)}a^{\varepsilon_{M(k)}}v^{\delta_n}=\lambda_n^{(k)}a_{M(k),n}$ holds on $\mathbb{D}$. Consequently, $Ha_{M(k),n}(\mathbb{D})\subseteq Hw_k(\mathbb{D})$ holds with continuous inclusion for every $n$ and thus $\mathcal{A}_{M(k)}H(\mathbb{D})\subseteq Hw_k(\mathbb{D})$ holds with continuous inclusion. To sum up so far, we have
$$
(\star)\;\;\;\;\;\forall\:k\;\exists\:M\colon\mathcal{A}_{M}H(\mathbb{D})\subseteq Hw_k(\mathbb{D}) \text{ with continuous inclusion.}
$$
Next, we claim that for any $N$ there exist $k=k(N)$, $n=n(N)$ and $C_N>0$ such that $a_{N,n(N)}|g|\leqslant C_N$ holds for all $g\in\mathbb{P}$ with $w_k|g|\leqslant1$. Assume that this is not the case. Then there exists $N_0$ such that for all $k=n$ and $C_N=k^2$ there is $g_k\in\mathbb{P}$ with $w_k|g_k|\leqslant1$ on $\mathbb{D}$ and $a_{N,k}|g_k|>k^2$ at some point on $\mathbb{D}$. Now $(g_k/k)_{k\in\mathbb{N}}\subseteq AH(\mathbb{D})$ and $w_k|g_k/k|\leqslant1/k$ and thus $g_k/k\rightarrow0$ in $AH(\mathbb{D})$. Therefore, $g_k/k\rightarrow0$ in $\mathcal{A}_NH(\mathbb{D})$ for every $N$ and there is $m$ such that $(g_k/k)_{k\in\mathbb{N}}\subseteq Ha_{N,m}(\mathbb{D})$ is bounded. Hence, there exist $M$ and $D>0$ such that $a_{N,m}|g_k|\leqslant Dk$ on $\mathbb{D}$ for every $k$, a contradiction. Let us now show that for given $N$ and $k=k(N)$, $n=n(N)$ as above $Hw_k(\mathbb{D})\subseteq Ha_{N,n}(\mathbb{D})$ holds with continuous inclusion. Given $g\in Hw_k(\mathbb{D})$ satisfying $w_k|g|\leqslant1$ 
on $\mathbb{D}$ we consider $(S_jg)_{j\in\mathbb{N}}\subseteq\mathbb{P}$ satisfying $w_k|S_jg|\leqslant1$ and $S_jg\rightarrow g$ in $H(\mathbb{D})$. By the above, $a_{N,n}|g_j|\leqslant C_N$ holds for all $j$. Taking the pointwise limit for $j$ tending to infinity we get $a_{N,n(N)}|g|\leqslant C_N$ from where the continuous inclusion follows. We thus have shown
$$
(\circ)\;\;\;\;\;\forall\:N\;\exists\:k,\,n\colon Hw_k(\mathbb{D})\subseteq Ha_{N,n}(\mathbb{D}) \text{ with continuous inclusion.}
$$
To show (m) let now $N$ be given. We select $k$ as in $(\circ)$ and for this $k$ we select $M$ as in $(\star)$. By increasing $k$ we can assume $M>N$. Moreover, we select $n$ as in $(\circ)$. Then we have $\mathcal{A}_{M}H(\mathbb{D})\subseteq Ha_{N,n}(\mathbb{D})$ with continuous inclusion. Thus, for $m\geqslant n$ it follows $Ha_{M,m}(\mathbb{D})\subseteq Ha_{N,n}(\mathbb{D})$  with continuous inclusion. Arguments similar to those at the end of the proof of (i) show the estimate in (m).
\medskip
\\(iii) Straightforward computations show that condition (df) is equivalent to
$$
\exists\:P\:\forall\:N\geqslant P,\,n\;\exists\:m>n,\,r_0\in\;]0,1[\;\forall\:r\geqslant r_0\colon {\textstyle\frac{\delta_m-\delta_n}{\varepsilon_P-\varepsilon_N}\geqslant\frac{\log a(r)}{\log v(r)}}.
$$
and that condition (m) is equivalent to
$$
\forall\:N\;\exists\:M>N,\,n\;\forall\:m>n\;\exists\:r_0\in\;]0,1[\;\forall\:r\geqslant r_0\colon {\textstyle\frac{\varepsilon_N-\varepsilon_M}{\delta_m-\delta_n}\geqslant\frac{\log v(r)}{\log a(r)}}.
$$
Now it is obvious that none of these two conditions can be satisfied under the assumption stated in (iii).
\end{proof}

\noindent{}Since the condition in \ref{Bonet-Proper}.(iii) at first glance looks hard to actualize, let us give a hint at a possible construction of $a$ and $v$ satisfying this condition. Proceeding as Bierstedt, Bonet \cite[Claim on p.~765]{BB2006} for given $\alpha$, $\beta\colon [1,\infty[\:\rightarrow\:[1,\infty[$ (Bierstedt, Bonet suggest to think of $\alpha(t)=t^2$ and $\beta(t)=t^3$) we construct $\gamma\colon[1,\infty[\:\rightarrow\:[1,\infty[$ continuous, strictly increasing, convex with $\alpha\leqslant\gamma\leqslant\beta$ and sequences $(a_k)_{k\in\mathbb{N}}$ and $(b_k)_{k\in\mathbb{N}}$ tending to infinity with $1=a_1<b_1<a_2<\cdots<a_k<b_k<\cdots$ such that $\gamma(a_k)=\beta(a_k)$ and $\gamma(b_k)=\alpha(b_k)$ holds for all $k$. Now we may select $\nu\colon[1,\infty[\:\rightarrow\:[1,\infty[$ with $\alpha<\nu<\beta$ such that $\beta(t)/\nu(t)$ and $\nu(t)/\alpha(t)$ tend to infinity for $t$ tending to infinity (in the above example we may for instance take $\nu(t)=t^{5/2}$). By the properties 
of $\gamma$ we thus get $\limsup_{t\rightarrow\infty}\gamma(t)/\nu(t)=\limsup_{t\rightarrow\infty}\nu(t)/\gamma(t)=\infty$. Finally we put $a(r)=(\exp(1/\gamma(1/(1-r)))-1)/\exp(1)$ and $v(r)=(\exp(1/\nu(1/(1-r)))-1)/\exp(1)$ which then by construction satisfy the condition in \ref{Bonet-Proper}.(iii). With the help of Bonet, Doma\'{n}ski, Lindstr\"om \cite[Proposition 7]{BDL} it can (for reasonable $\nu$) then be concluded that $a^{\varepsilon_N}$ and $v^{\delta_n}$ are essential and the space $AH(\mathbb{D})$ corresponding to the sequence $\mathcal{A}=((a_{N,n})_{n\in\mathbb{N}})_{N\in\mathbb{N}}$ with $a_{N,n}(z)=a(|z|)^{\varepsilon_N}v(|z|)^{\delta_n}$ will indeed be a proper PLB-space.

\subsection*{Acknowledgements}

This article arises from a part of the author's doctoral thesis. He is indebted to Klaus D.~Bierstedt who guided his interest to weighted function spaces and gave him the opportunity for this research under his direction. After the sudden death of Klaus D.~Bierstedt, the author continued his work under the direction of Jos\'{e} Bonet; for this and also for many advices, helpful remarks and valuable suggestions the author likes to thank him sincerely.
\smallskip
\\In addition, the author likes to thank the referee who made him aware of the fact that the first idea on further examples (explained after \ref{MST-Example-1}) does not yield proper PLB-spaces and to Jos\'{e} Bonet who communicated the result \ref{Bonet-Proper}, its consequences and the idea on the construction explained at the end of Section \ref{Examples} to the author.

\setlength{\parskip}{0cm}

\renewcommand{\baselinestretch}{1}\normalsize

\small

\noindent{\sc Author's Address:}{ \footnotesize Sven-Ake Wegner, Fachbereich C -- Mathematik und Naturwissenschaften, Arbeitsgruppe Funktionalanalysis, Bergische Universit\"at Wuppertal, Gau\ss{}\-stra\ss{}e 20, D-42097 Wuppertal, GERMANY, e-mail: wegner@math.uni-wuppertal.de.}

\end{document}